\newcommand{\s}{\text{sign}}
\renewcommand{\P}{\mathbb{P}}
\newcommand{\indep}{\rotatebox[origin=c]{90}{$\models$}}
\title{Asymptotic normality of a change plane estimator in fixed dimension with near-optimal rate}
\author{Debarghya Mukherjee$^{\dagger}$, Moulinath Banerjee$^{\dagger}$, \\ Debasri Mukherjee$^*$ and Ya'acov Ritov$^{\dagger}$ \\\\ $^{\dagger}$Department of Statistics, University of Michigan \\ $^*$Department of Economics, Western Michigan University}
\begin{document}

\maketitle
\begin{abstract}
Linear thresholding models postulate that the conditional distribution of a response variable in terms of covariates differs on the two sides of a (typically unknown) hyperplane in the covariate space. A key goal in such models is to learn about this separating hyperplane. Exact likelihood or least squares methods to estimate the thresholding parameter involve an indicator function which make them difficult to optimize and are, therefore, often tackled by using a surrogate loss that uses a smooth approximation to the indicator. In this paper, we demonstrate that the resulting estimator is asymptotically normal with a near optimal rate of convergence: $n^{-1}$ up to a log factor, in both classification and regression thresholding models. This is substantially faster than the currently established convergence rates of smoothed estimators for similar models in the statistics and econometrics literatures. We also present a real-data application of our approach to an environmental data set where $CO_2$ emission is explained in terms of a separating hyperplane defined through per-capita GDP and urban agglomeration. 
\end{abstract}

\section{Introduction}
The simple linear regression model assumes a uniform linear relationship between the covariate and the response, in the sense that the regression parameter $\beta$ is the same over the entire covariate domain. In practice, the situation can be more complicated: for instance, the regression parameter may differ from sub-population to sub-population within a large (super-) population. Some common techniques to account for such heterogeneity include mixed linear models, introducing an interaction effect, or fitting different models among each sub-population which corresponds to a supervised classification setting where the true groups (sub-populations) are \emph{a priori known}. 
	\newline
	\newline
	\indent A more difficult scenario arises when the sub-populations are unknown, in which case regression and classification must happen simultaneously. Consider the scenario where the conditional mean of $Y_i$ given $X_i$ is different for different unknown sub-groups. A  well-studied treatment of this problem -- the so-called change point problem -- considers a simple thresholding model where membership in a sub-group is determined by whether a real-valued observable $X$ falls to the left or right of an unknown parameter $\gamma$. More recently, there has been work for multi-dimensional covariates, namely when the membership is determined by which side a random vector $X$ falls with respect to an hyperplane with unknown normal vector $\theta_0$. A concrete example appears in \cite{wei2014latent} who extend the linear thresholding model due to  \cite{kang2011new} to general dimensions: 
	\begin{eqnarray}\label{eq:weimodel}
	Y=\mu_1\cdot 1_{X^{\top}\theta_0\geq 0}+\mu_2\cdot 1_{X^{\top}\theta_0<0}+\varepsilon\,,
	\end{eqnarray}
	and studied computational algorithms and consistency of the same. This model and others with similar structure, called \emph{change plane models}, are useful in various fields of research, e.g. modeling treatment effect heterogeneity in drug treatment (\cite{imai2013estimating}), modeling sociological data on voting and employment (\cite{imai2013estimating}), or cross country growth regressions in econometrics 
(\cite{seo2007smoothed}).
	\newline
	\newline
\indent Other aspects of this model have also been investigated. \cite{fan2017change} examined the change plane model from the statistical testing point of view, with the null hypothesis being the absence of a separating hyperplane. They proposed a test statistic, studied its asymptotic distribution and provided sample size recommendations for achieving target values of power. \cite{li2018multi} extended the change point detection problem in the multi-dimensional setup by considering the case where $X^{\top}\theta_0$ forms a multiple change point data sequence. 
The key difficultly with change plane type models is the inherent discontinuity in the optimization criteria involved where the parameter of interest appears as an argument to some indicator function, rendering the optimization extremely hard. To alleviate this, one option is to kernel smooth the indicator function, an approach that was adopted by Seo and Linton \cite{seo2007smoothed} in a version of the change-plane problem, motivated by earlier results of Horowitz \cite{horowitz1992smoothed} that dealt with a smoothed version of the maximum score estimator. Their model has an additive structure of the form:
\[Y_t = \beta^{\top}X_t + \delta^{\top} \tilde{X}_t \mathds{1}_{Q_t^{\top} \boldmath \psi > 0} + \epsilon_t \,,\]
where $\psi$ is the (fixed) change-plane parameter, and $t$ can be viewed as a time index. Under a set of assumptions on the model (Assumptions 1 and 2 of their paper), they showed asymptotic normality of their estimator of $\psi$ obtained by minimizing a smoothed least squares criterion
that uses a differentiable distribution function $\mathcal{K}$. The rate of convergence of $\hat{\psi}$ to the truth was shown to be $\sqrt{n/\sigma_n}$ where $\sigma_n$ was the bandwidth parameter used to smooth the least squares function. As noted in their Remark 3, under the special case of i.i.d. observations, their requirement that $\log n/(n \sigma_n^2) \rightarrow 0$ translates to a maximal convergence rate of $n^{3/4}$ up to a logarithmic factor. The work of \cite{li2018multi} who considered multiple parallel change planes (determined by a fixed dimensional normal vector) and high dimensional linear models in the regions between consecutive hyperplanes also builds partly upon the methods of \cite{seo2007smoothed} and obtains the same (almost) $n^{3/4}$ rate for the normal vector (as can be seen by putting Condition 6 in their paper in conjunction with the conclusion of Theorem 3). 
\\\\

While it is established that the condition $n\sigma_n^2 \to \infty$ is sufficient (upto a log factor) for achieving asymptotic normality of the smoothed estimator, there is no result in the existing literature to ascertain whether its necessity. Intuitively speaking, the necessary condition for asymptotic normality ought to be $n \sigma_n \to 0$, as this will ensure a growing number of observations in a $\sigma_n$ neighborhood around the true hyperplane, allowing the central limit theorem to kick in. In this paper we \emph{bridge this gap} by proving that asymptotic normality of the smoothed change point estimator is, in fact, achievable with $n \sigma_n \to \infty$. 
This implies that the best possible rate of convergence of the smoothed estimator can be arbitrarily close to $n^{-1}$, the minimax optimal rate of estimation for this problem. To demonstrate this, we focus on two change plane estimation problems, one with a continuous and another with a binary response. The continuous response model we analyze here is the following: 
\begin{equation}
\label{eq:regression_main_eqn}
Y_i = \beta_0^{\top}X_i + \delta_0^{\top}X_i\mathds{1}_{Q_i^{\top}\psi_0 > 0} + \eps_i \,.
\end{equation}
for i.i.d. observations $\{(X_i, Y_i, Q_i\}_{i=1}^n$, where the zero-mean transitory shocks $\eps_i \indep (X_i, Q_i)$. Our calculation can be easily extended to the case when the covariates on the either side of the change hyperplane are different and $\bbE[\eps \mid X, Q] = 0$ with more tedious bookkeeping. As this generalization adds little of interest, conceptually, to our proof, we posit the simpler model for ease of understanding.
As the parameter $\psi_0$ is only identifiable upto its norm, we assume that the first co-ordinate is $1$ (along the lines of \cite{seo2007smoothed}) which removes one degree of freedom and makes the parameter identifiable. 
\\\\
To illustrate that a similar phenomenon transpires with binary response, we also study a canonical version of such a model which can be briefly described as follows: The covariate $Q \sim P$ where $P$ is distribution on $\mathbb{R}^d$ and the conditional distribution of $Y$ given $Q$ is modeled as follows: 
\begin{equation}
\label{eq:classification_eqn}
P(Y=1|Q) = \alpha_0 \mathds{1}(Q^{\top}\psi_0 \le 0) + \beta_0\mathds{1}(Q^{\top}\psi_0 > 0)
\end{equation}
for some parameters $\alpha_0, \beta_0\in (0,1)$ and $\psi_0\in\mathbb{R}^d$ (with first co-ordinate being one for identifiability issue as for the continuous response model), the latter being of primary interest for estimation. 
This model is identifiable up to a permutation of $(\alpha_0, \beta_0)$, so we further assume $\alpha_0 < \beta_0$. For both models, we show that $\sqrt{n/\sigma_n}(\hat \psi - \psi_0)$ converges to zero-mean normal distribution as long as $n \sigma_n \to \infty$ but the calculations for the binary model are completely relegated to Appendix \ref{sec:supp_classification}. 
\\\\
{\bf Organization of the paper:} The rest of the paper is organized as follows: In Section \ref{sec:theory_regression} we present the methodology, the statement of the asymptotic distributions and a sketch of the proof for the continuous response model \eqref{eq:regression_main_eqn}. In Section \ref{sec:classification_analysis} we briefly describe the binary response model \eqref{eq:classification_eqn} and related assumptions, whilst the details can be found in the supplementary document. In Section \ref{sec:simulation} we present some simulation results, both for the binary and the continuous response models to study the effect of the bandwidth on the quality of the normal approximation in finite samples. In Section \ref{sec:real_data}, we present a real data analysis where we analyze the effect of  income and urbanization on the $CO_2$ emission in different countries. 
\\\\
{\bf Notations: } Before delving into the technical details, we first setup some notations here. We assume from now on, $X \in \reals^p$ and $Q \in \reals^d$. For any vector $v$ we define by $\tilde v$ as the vector with all the co-ordinates expect the first one. We denote by $K$ the kernel function used to smooth the indicator function. For any matrix $A$, we denote by $\|A\|_2$ (or $\|A\|_F$) as its Frobenious norm and $\|A\|_{op}$ as its operator norm. For any vector, $\| \cdot \|_2$ denotes its $\ell_2$ norm.

\renewcommand{\th}{\theta}
\section{Methodology and Theory for Continuous Response Model}
\label{sec:theory_regression}
In this section we present our analysis for the continuous response model. Without smoothing, the original estimating equation is: 
$$
f_{\beta, \delta, \psi}(Y, X, Q) = \left(Y - X^{\top}\beta - X^{\top}\delta\mathds{1}_{Q^{\top}\psi > 0}\right)^2  
$$
and we estimate the parameters as: 
\begin{align}
\label{eq:ls_estimator}
\left(\hat \beta^{LS}, \hat \delta^{LS}, \hat \psi^{LS}\right) & = \argmin_{(\beta, \delta, \psi) \in \Theta} \bbP_n f_{\beta, \delta, \psi} \notag \\
& :=  \argmin_{(\beta, \delta, \psi) \in \Theta}\bbM_n(\beta, \delta, \psi)\,.
\end{align}
where $\bbP_n$ is empirical measure based on i.i.d. observations $\{(X_i, Y_i, Q_i)\}_{i=1}^n$ and $\Theta$ is the parameter space. Henceforth, we assume $\Theta$ is a compact subset of dimension $\reals^{2p+d}$. We also define $\theta = (\beta, \delta, \psi)$, i.e. all the parameters together as a vector and by $\theta_0$ is used to denote the true parameter vector $(\beta_0, \delta_0, \psi_0)$. Some modification of equation \eqref{eq:ls_estimator} leads to the following:  
\begin{align*}
(\hat \beta^{LS}, \hat \delta^{LS}, \hat \psi^{LS}) & = \argmin_{\beta, \delta, \psi}  \sum_{i=1}^n \left(Y_i - X_i^{\top}\beta - X_i^{\top}\delta\mathds{1}_{Q_i^{\top}\psi > 0}\right)^2 \\ 
& = \argmin_{\beta, \delta, \psi}  \sum_{i=1}^n \left[\left(Y_i - X_i^{\top}\beta\right)^2\mathds{1}_{Q_i^{\top}\psi_0 \le 0} \right. \\
&  \hspace{14em} \left. + \left(Y_i - X_i^{\top}\beta - X_i^{\top}\delta\right)^2\mathds{1}_{Q_i^{\top}\psi > 0} \right] \\
& = \argmin_{\beta, \delta, \psi}  \sum_{i=1}^n \left[\left(Y_i - X_i^{\top}\beta\right)^2 + \left\{\left(Y_i - X_i^{\top}\beta - X_i^{\top}\delta\right)^2 \right. \right. \\
& \hspace{17em} \left. \left. - \left(Y_i - X_i^{\top}\beta\right)^2\right\}\mathds{1}_{Q_i^{\top}\psi > 0} \right] 
\end{align*}
Typical empirical process calculations yield under mild conditions: 
$$
\|\hat \beta^{LS} - \beta_0\|^2 + \|\hat \delta^{LS} - \delta_0\|^2 + \|\hat \psi^{LS} - \psi_0 \|_2 = O_p(n^{-1})
$$
but inference is difficult as the limit distribution is unknown, and in any case, would be a highly non-standard distribution. Recall that even in the one-dimensional change point model with fixed jump size, the least squares change point estimator converges at rate $n$ to the truth with a non-standard limit distribution, namely a minimizer of a two-sided compound Poisson process (see \cite{lan2009change} for more details). To obtain a computable estimator with tractable limiting distribution, we resort to a smooth approximation of the indicator function in \eqref{eq:ls_estimator} using a distribution kernel with suitable bandwidth, i.e we replace $\mathds{1}_{Q_i^{\top}\psi > 0}$ by $K(Q_i^{\top}\psi/\sigma_n)$ for some appropriate distribution function $K$ and bandwidth $\sigma_n$, i.e. 
\begin{align*}
(\hat \beta^S, \hat \delta^S, \hat \psi^S) & = \argmin_{\beta, \delta, \psi} \left\{ \frac1n \sum_{i=1}^n \left[\left(Y_i - X_i^{\top}\beta\right)^2 + \left\{\left(Y_i - X_i^{\top}\beta - X_i^{\top}\delta\right)^2 \right. \right. \right. \\
& \hspace{15em} \left. \left. \left. - \left(Y_i - X_i^{\top}\beta\right)^2\right\}K\left(\frac{Q_i^{\top}\psi}{\sigma_n}\right) \right] \right\} \\
& = \argmin_{(\beta, \delta, \psi) \in \Theta} \bbP_n f^s_{(\beta, \delta, \psi)}(X, Y, Q) \\
& := \argmin_{\theta \in \Theta} \bbM^s_n(\theta) \,.
\end{align*}
Define $\bbM$ (resp. $\bbM^s$) to be the population counterpart of $\bbM_n$ and $\bbM_n^s$ respectively which are defined as: 
\begin{align*}
\bbM(\th) & = \bbE\left(Y - X^{\top}\beta\right)^2 + \bbE\left(\left[-2\left(Y_i - X^{\top}\beta\right)X^{\top}\delta + (X^{\top}\delta)^2\right] \mathds{1}_{Q^{\top}\psi > 0}\right) \,, \\
\bbM^s(\theta) & = \bbE\left[(Y - X^{\top}\beta)^2 + \left\{-2(Y-X^{\top}\beta)(X^{\top}\delta) + (X^{\top}\delta)^2\right\}K\left(\frac{Q^{\top}\psi}{\sigma_n}\right)\right] \,.
\end{align*}
As noted in the proof of \textcolor{blue}{Seo and Linton}, the assumption $\log{n}/n\sigma_n^2 \to 0$ was only used to show: 
$$
\frac{\left\|\hat \psi^s - \psi_0\right\|}{\sigma_n} = o_p(1) \,.
$$
In this paper, we show that one can achieve the same conclusion as long as $n\sigma_n \to \infty$.  The rest of the proof for the normality is similar to that of \cite{seo2007smoothed}, we will present it briefly for the ease the readers. The proof is quite long and technical, therefore we break the proof into several lemmas. We, first, list our assumptions: 
\begin{assumption}
\label{eq:assm}
\begin{enumerate}
\item Define $f_\psi(\cdot \mid \tilde Q)$ to be the conditional distribution of $Q^{\top}\psi$ given $\tilde Q$. (In particular we will denote by $f_0(\cdot \mid \tilde q)$ to be conditional distribution of $Q^{\top}\psi_0$ given $\tilde Q$ and $f_s(\cdot \mid \tilde q)$ to be the conditional distribution of $Q^{\top}\psi_0^s$ given $\tilde Q$. Assume that there exists $F_+$ such that $\sup_t f_0(t | \tilde Q) \le F_+$ almost surely on $\tilde Q$ and for all $\psi$ in a neighborhood of $\psi_0$ (in particular for $\psi_0^s$). Further assume that $f_\psi$ is differentiable and the derivative is bounded by $F_+$ for all $\psi$ in a neighborhood of $\psi_0$ (again in particular for $\psi_0^s$).
\vspace{0.1in}
\item Define $g(Q) = \var(X \mid Q)$. There exists $c_-$ and $c_+$ such that $c_- \le \lambda_{\min}(g(Q)) \le \lambda_{\max}(g(Q)) \le c_+$ almost surely. Also assume that $g$ is a Lipschitz with constant $G_+$ with respect to $Q$. 
\vspace{0.1in}
\item There exists $p_+ < \infty$ and $p_- > 0, r > 0$ such that: 
$$
p_- \|\psi - \psi_0\| \le \bbP\left(\s\left(Q^{\top}\psi\right) \neq \s\left(Q^{\top}\psi_0\right)\right) \le p_+ \|\psi - \psi_0\| \,,
$$
for all $\psi$ such that $\|\psi - \psi_0\| \le r$. 
\vspace{0.1in}
\item For all $\psi$ in the parameter space $0 < \bbP\left(Q^{\top}\psi > 0\right) < 1$. 
\vspace{0.1in} 
\item Define $m_2(Q) = \bbE\left[\|X\|^2 \mid Q\right]$ and $m_4(Q) = \bbE\left[\|X\|^4 \mid Q\right]$. Assume $m_2, m_4$ are bounded Lipschitz function of $Q$.  
\end{enumerate}
\end{assumption}

\subsection{Sufficient conditions for above assumptions }
We now demonstrate some sufficient conditions for the above assumptions to hold. The first condition is essentially a condition on the conditional density of the first co-ordinate of $Q$ given all other co-ordinates. If this conditional density is bounded and has bounded derivative, then first assumption is satisfied. This condition is satisfied in fair generality. The second assumption implies that the conditional distribution of X given Q has variance in all the direction over all $Q$. This is also very weak condition, as is satisfied for example if X and Q and independent (with $X$ has non-degenerate covariance matrix) or $(X, Q)$ are jointly normally distributed to name a few. This condition can further be weaken by assuming that the maximum and minimum eigenvalues of $\bbE[g(Q)]$ are bounded away from $\infty$ and $0$ respectively but it requires more tedious book-keeping. The third assumption is satisfied as long as as $Q^{\top}\psi$ has non-zero density near origin, while the fourth assumption merely states that the support of  $Q$ is not confined to one side of the hyperplane for any hyperplane and a simple sufficient condition for this is $Q$ has continuous density with non-zero value at the origin. The last assumption is analogous to the second assumption for the conditional fourth moment which is also satisfied in fair generality. 
\\\\
\noindent
{\bf Kernel function and bandwidth: } We take $K(x) = \Phi(x)$ (distribution of standard normal random variable) for our analysis. For the bandwidth we assume $n\sigma_n^2 \to 0$ and $n \sigma_n \to \infty$ as the other case, (i.e. $n\sigma_n^2 \to \infty$) is already established in \cite{seo2007smoothed}. 
\\\\
\noindent
Based on Assumption \ref{eq:assm} and our choice of kernel and bandwidth we establish the following theorem: 
\begin{theorem}
\label{thm:regression}
Under Assumption \ref{eq:assm} and the above choice of kernel and bandwidth we have: 
$$
\sqrt{n}\begin{pmatrix}\begin{pmatrix} \hat \beta^s \\ \hat \delta^s \end{pmatrix} - \begin{pmatrix} \beta_0 \\ \delta_0 \end{pmatrix} \end{pmatrix} \overset{\mathscr{L}}{\implies} \cN(0, \Sigma_{\beta, \delta})
$$
and 
$$
\sqrt{n/\sigma_n} \left(\hat \psi^s - \psi_0\right) \overset{\mathscr{L}}{\implies} \cN(0, \Sigma_\psi) \,,
$$
for matrices $\Sigma_{\beta, \delta}$ and $\Sigma_\psi$ mentioned explicitly in the proof. Moreover they are asymptotically independent. 
\end{theorem}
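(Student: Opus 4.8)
The plan is to follow the roadmap indicated just above the theorem: the authors point out that once the localization $\|\hat\psi^s-\psi_0\|/\sigma_n=o_p(1)$ is in hand, the rest of the argument mirrors Seo--Linton, so I would organize the proof as (i) consistency, (ii) a sharp \emph{anisotropic} rate of convergence, and (iii) a rescaled $Z$-estimator linearization delivering the joint normal limit and the asymptotic independence. Throughout I would work with the population smoothed minimizer $\theta_0^s=(\beta_0^s,\delta_0^s,\psi_0^s):=\argmin_{\theta\in\Theta}\bbM^s(\theta)$ (the object already referred to in Assumption~\ref{eq:assm} via $\psi_0^s$ and $f_s$) rather than with $\theta_0$ directly, and show only at the end that the smoothing bias $\theta_0^s-\theta_0$ is negligible at the scales $\sqrt n$ and $\sqrt{n/\sigma_n}$. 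For consistency I would prove $\sup_{\theta\in\Theta}|\bbM^s_n(\theta)-\bbM(\theta)|\xrightarrow{p}0$ --- the stochastic part from a uniform law of large numbers, using that $\Phi$ is bounded, that $\{q\mapsto\Phi(q^\top\psi/\sigma_n):\psi\}$ is VC for each fixed $n$, and the moment bounds on $\|X\|$ coming from the conditions on $m_2,m_4$; the deterministic part from $\Phi(t/\sigma_n)\to\mathds{1}_{t>0}$ off the origin and dominated convergence --- and then invoke well-separatedness of the minimizer $\theta_0$ of $\bbM$ (identifiability, from the density of $Q^\top\psi$ near $0$ and from $0<\bbP(Q^\top\psi>0)<1$ in Assumption~\ref{eq:assm}) to conclude $\hat\theta^s\xrightarrow{p}\theta_0$ and $\theta_0^s\to\theta_0$.

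\textbf{Rate of convergence (the crux).} Introduce the anisotropic discrepancy $d_n^2(\theta,\theta_0^s)=\|\beta-\beta_0^s\|^2+\|\delta-\delta_0^s\|^2+\sigma_n^{-1}\|\psi-\psi_0^s\|^2$. On the population side I would establish the curvature lower bound $\bbM^s(\theta)-\bbM^s(\theta_0^s)\gtrsim d_n^2(\theta,\theta_0^s)$ on a slowly shrinking neighborhood of $\theta_0^s$. The $(\beta,\delta)$ part is genuinely quadratic with Hessian bounded below: $\nabla^2_{\beta\beta}\bbM^s=2\bbE[XX^\top]\succeq2\bbE[g(Q)]\succeq2c_-I$, and the joint $(\beta,\delta)$ Hessian is positive definite by a Schur-complement argument using the eigenvalue bounds on $\var(X\mid Q)$ and $0<\bbP(Q^\top\psi_0>0)<1$ from Assumption~\ref{eq:assm}. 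For the $\psi$ part one computes that $\nabla^2_\psi\bbM^s(\theta_0^s)$ equals $\sigma_n^{-1}$ times a positive definite matrix: writing $h_0(Q)=\bbE[(X^\top\delta_0)^2\mid Q]$, the weight $\sigma_n^{-2}\phi'(\cdot/\sigma_n)$, once paired with the sign flip $1-2\,\mathds{1}_{Q^\top\psi_0>0}$ (which appears because $\bbE[-2(Y-X^\top\beta_0)(X^\top\delta_0)+(X^\top\delta_0)^2\mid Q]=h_0(Q)(1-2\,\mathds{1}_{Q^\top\psi_0>0})$), integrates across an $O(\sigma_n)$-slab around $\{Q^\top\psi_0=0\}$ to $2\phi(0)\sigma_n^{-1}$ times an $h_0$-weighted second-moment matrix of $Q$ restricted to that hyperplane, which is positive definite by Assumption~\ref{eq:assm} (the bounded conditional density of $Q^\top\psi_0$ and of its derivative, together with the sign-mismatch lower bound $p_-\|\psi-\psi_0\|\le\bbP(\s(Q^\top\psi)\neq\s(Q^\top\psi_0))$ which forces the hyperplane to cut the support non-degenerately in every direction). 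For $\|\psi-\psi_0\|$ in the intermediate range where the smooth second-order expansion is not yet dominant I would instead use the lower half of Assumption~\ref{eq:assm} directly to keep the excess risk bounded below, and then stitch the two regimes with a one-step bootstrap of the rate. On the empirical side I would bound the modulus $\bbE\sup_{d_n(\theta,\theta_0^s)\le\delta}|\sqrt n(\bbP_n-\bbP)(f^s_\theta-f^s_{\theta_0^s})|$: the $(\beta,\delta)$-increments form a Lipschitz finite-dimensional class, while the $\psi$-increment class $\{f^s_\theta-f^s_{\theta_0^s}:\|\psi-\psi_0^s\|\le\delta\sqrt{\sigma_n}\}$ is VC with $L^2(\bbP)$-diameter $\lesssim\delta$ and an envelope whose $L^2$-norm is $\asymp\sqrt{\sigma_n}$ (it is supported within an $O(\sigma_n)$-slab around $\{Q^\top\psi_0^s=0\}$); the maximal inequality for VC classes then gives $\phi_n(\delta)\lesssim\delta\sqrt{\log(1/\delta)}+\log(1/\delta)/\sqrt n$. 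Feeding these into the standard rate theorem yields $d_n(\hat\theta^s,\theta_0^s)=O_p(\sqrt{\log n/n})$; in particular $\|\hat\psi^s-\psi_0^s\|=O_p(\sqrt{\sigma_n\log n/n})$, and combined with $\|\psi_0^s-\psi_0\|=O(\sigma_n^2)$ (checked below) this produces the essential localization $\|\hat\psi^s-\psi_0\|/\sigma_n=O_p(\sqrt{\log n/(n\sigma_n)})=o_p(1)$ --- the precise step that closes the gap left open by Seo--Linton.

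\textbf{Linearization, independence, and bias removal.} With $\hat\theta^s$ now in an $o_p(\sqrt{\sigma_n})$-neighborhood of $\theta_0^s$, I would expand the first-order condition $\nabla\bbM^s_n(\hat\theta^s)=0$ about $\theta_0^s$ to get $\hat\theta^s-\theta_0^s=-[\nabla^2\bbM^s_n(\bar\theta)]^{-1}\nabla\bbM^s_n(\theta_0^s)$ and conjugate by the rescaling $D_n=\mathrm{diag}(I_p,I_p,\sqrt{\sigma_n}\,I_{d-1})$. One shows $D_n\nabla^2\bbM^s_n(\bar\theta)D_n\xrightarrow{p}$ a block-diagonal matrix: its $(\beta,\delta)$-block converges to a fixed positive definite $V_{\beta\delta}$ (the Hessian of the ordinary least-squares criterion), the rescaled $\psi$-block $\sigma_n\nabla^2_\psi\bbM^s_n$ converges to $V_\psi\succ0$, and the cross-blocks are $O(\sqrt{\sigma_n})\to0$ --- this vanishing off-diagonal is exactly what yields the asymptotic independence. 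Simultaneously $\sqrt n\,D_n\nabla\bbM^s_n(\theta_0^s)\Rightarrow\cN(0,\mathrm{blockdiag}(\Sigma_1,S_\psi))$: for the $(\beta,\delta)$-block an ordinary CLT applies (the summand is centered since $\theta_0^s$ is an interior minimizer), and for the $\psi$-block $n^{-1/2}\sum_i\sqrt{\sigma_n}\,\nabla_\psi f^s_{\theta_0^s}(X_i,Y_i,Q_i)$ is a triangular array of centered vectors with $O(1)$ variance to which a Lindeberg CLT applies --- the Lindeberg condition holding because the individual summands are of size $O_p(\sigma_n^{-1/2})=o_p(\sqrt n)$ when $n\sigma_n\to\infty$, using the bounds on $\bbE[\|X\|^4\mid Q]$ from Assumption~\ref{eq:assm} and the moments of $\eps$; the between-block covariance $\sqrt{\sigma_n}\,\mathrm{Cov}(\nabla_{\beta,\delta}f^s_{\theta_0^s},\nabla_\psi f^s_{\theta_0^s})$ is again $O(\sqrt{\sigma_n})\to0$. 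Slutsky then gives $\sqrt n\big((\hat\beta^s,\hat\delta^s)-(\beta_0^s,\delta_0^s)\big)\Rightarrow\cN(0,V_{\beta\delta}^{-1}\Sigma_1V_{\beta\delta}^{-1})$ and $\sqrt{n/\sigma_n}(\hat\psi^s-\psi_0^s)\Rightarrow\cN(0,V_\psi^{-1}S_\psi V_\psi^{-1})$, jointly and independently. Finally, since $\nabla_{\beta,\delta}\bbM^s(\theta_0)$ and $\nabla_\psi\bbM^s(\theta_0)$ are of order $\sigma_n^2$ and $\sigma_n$ respectively (both expectations vanish for the un-smoothed criterion, and smoothing against the even kernel $\phi$ costs a factor $\sigma_n^2$ for the first and $\sigma_n$ for the second, the latter because of the odd sign flip) while the corresponding population Hessians are of order $1$ and $\sigma_n^{-1}$, we obtain that the $(\beta,\delta)$-components of $\theta_0^s-\theta_0$ are $O(\sigma_n^2)$ and $\psi_0^s-\psi_0=O(\sigma_n^2)$. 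Hence $\sqrt n\,\|(\beta_0^s,\delta_0^s)-(\beta_0,\delta_0)\|=O(\sqrt n\,\sigma_n^2)=o(1)$ and $\sqrt{n/\sigma_n}\,\|\psi_0^s-\psi_0\|=O(\sqrt{n\sigma_n^{3}})=o(1)$ under $n\sigma_n^2\to0$, so $\theta_0^s$ can be replaced by $\theta_0$ everywhere, and the sandwich forms $\Sigma_{\beta,\delta}=V_{\beta\delta}^{-1}\Sigma_1V_{\beta\delta}^{-1}$ and $\Sigma_\psi=V_\psi^{-1}S_\psi V_\psi^{-1}$ are the matrices referred to in the statement.

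\textbf{Main obstacle.} The genuinely hard part is the rate step: simultaneously (a) proving the anisotropic population curvature with the correct $\sigma_n^{-1}$ blow-up in the $\psi$-direction and patching it seamlessly between the ``smooth'' regime $\|\psi-\psi_0\|\lesssim\sigma_n$ and the ``indicator-dominated'' regime further out, and (b) getting the entropy and --- crucially --- the \emph{envelope} bookkeeping of the $\psi$-increment class sharp (envelope of $L^2$-size $\asymp\sqrt{\sigma_n}$, not $O(1)$), so that the rate theorem returns $d_n(\hat\theta^s,\theta_0^s)=O_p(\sqrt{\log n/n})$ rather than the much coarser rate that a cruder Seo--Linton-type analysis would produce. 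The remaining items --- the uniform LLN, the finite-dimensional CLTs, the Lindeberg verification, and the bias estimates --- are routine bookkeeping once the scalings are correctly identified.
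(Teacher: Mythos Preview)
Your roadmap is the paper's: localize $\hat\psi^s$ to within $o_p(\sigma_n)$ of $\psi_0$, Taylor-expand the score with the rescaling $D_n$, prove a Lindeberg CLT for the rescaled gradient and an LLN for the rescaled Hessian, and read off the block-diagonal limit (hence the asymptotic independence). Two departures from the paper deserve comment; one carries a real gap.

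\textbf{Centering.} You expand about $\theta_0^s$ and remove the bias $\theta_0^s-\theta_0$ at the end; the paper expands about $\theta_0$ and instead shows $\sqrt{n\sigma_n}\,\bbE[\nabla_\psi\bbM_n^s(\theta_0)]=O(\sqrt{n\sigma_n^3})\to0$ directly. These are equivalent, but your bias accounting is off: $\nabla_\delta\bbM^s(\theta_0)=2\,\bbE\big[g(Q)\,\mathds{1}_{Q^\top\psi_0\le0}\,K(Q^\top\psi_0/\sigma_n)\big]\delta_0=\Theta(\sigma_n)$, not $O(\sigma_n^2)$ --- the integrand is discontinuous across the hyperplane, so the ``even-kernel gains a factor $\sigma_n^2$'' heuristic fails here. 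Hence $(\delta_0^s-\delta_0)=\Theta(\sigma_n)$ (the paper in fact computes the nonzero limit of $(\beta_0^s-\beta_0)/\sigma_n$). This is harmless because $\sqrt n\,\sigma_n\to0$ under $n\sigma_n^2\to0$, but your stated reason is wrong and would mislead if only $n\sigma_n\to\infty$ were assumed.

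\textbf{Rate step.} The paper does not use your purely quadratic $d_n$. It works with the two-regime metric
\[
d_*^2(\theta,\theta_0^s)=\|\gamma-\gamma_0^s\|^2+\frac{\|\psi-\psi_0^s\|^2}{\sigma_n}\mathds{1}_{\|\psi-\psi_0^s\|\le\cK\sigma_n}+\|\psi-\psi_0^s\|\mathds{1}_{\|\psi-\psi_0^s\|>\cK\sigma_n},
\]
proves the curvature $\bbM^s(\theta)-\bbM^s(\theta_0^s)\gtrsim d_*^2$ globally (no bootstrap or stitching), and bounds the modulus with the crude envelope inequality (Lemma 2.14.1 of \cite{vdvw96}): the envelope of $\{f_\theta-f_{\theta_0^s}:d_*\le\zeta\}$ has $L^2$-norm $\lesssim\sqrt{\zeta}\,\sigma_n^{1/4}$ in the inner regime, giving $\phi_n(\zeta)\lesssim\sqrt{\zeta}\,\sigma_n^{1/4}$ and hence $d_*^2(\hat\theta^s,\theta_0^s)=O_p(n^{-2/3}\sigma_n^{1/3})$. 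Since $n^{-2/3}\sigma_n^{1/3}\ll\sigma_n$ when $n\sigma_n\to\infty$, this already delivers $\|\hat\psi^s-\psi_0^s\|/\sigma_n=o_p(1)$. Your sharper target $d_n=O_p(\sqrt{\log n/n})$ via $\phi_n(\delta)\lesssim\delta\sqrt{\log(1/\delta)}$ would need a Bernstein/Talagrand-type maximal inequality, but the envelope is \emph{not} bounded (it carries $(Y-X^\top\beta_0^s)(X^\top\delta_0^s)$) and is \emph{not} ``supported in an $O(\sigma_n)$-slab'' since $K=\Phi$ has full support; making that work under only the fourth-moment conditions in Assumption~\ref{eq:assm} is considerably harder than you suggest and is unnecessary. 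The paper's cruder rate with the composite $d_*$ is both simpler and sufficient.
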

The proof of the theorem is relatively long, so we break it into several lemmas. We  provide a roadmap of the proof in this section while the elaborate technical derivations of the supporting lemmas can be found in Appendix. Let $\nabla \bbM_n^s(\theta)$ and $\nabla^2 \bbM_n^s(\theta)$ be the gradient and Hessian of $\bbM_n^s(\theta)$ with respect to $\theta$.  As $\hat \theta^s$ minimizes $\bbM_n^s(\theta)$, we have from the first order condition, $\nabla \bbM_n^s(\hat \theta^s) = 0$. Using one step Taylor expansion we have:
\allowdisplaybreaks 
\begin{align*}
\label{eq:taylor_first}
\nabla \bbM_n^s(\hat \theta^s) = \nabla \bbM_n^s(\theta_0) + \nabla^2 \bbM_n^s(\theta^*)\left(\hat \theta^s - \theta_0\right) = 0
\end{align*}
i.e.
\begin{equation}
\label{eq:main_eq}  
\left(\hat{\theta}^s - \theta_0\right) = -\left(\nabla^2 \bbM_n^s(\theta^*)\right)^{-1} \nabla \bbM_n^s(\theta_0)
\end{equation}
for some intermediate point $\th^*$ between $\hat \theta^s$ and $\theta_0$. 
Following the notation of \cite{seo2007smoothed}, define a diagonal matrix $D_n$ of dimension $2p + d$ with first $2p$ elements being 1 and the last $d$ elements being $\sqrt{\sigma_n}$. 
 we can write: 
\begin{align}
\sqrt{n}D_n^{-1}(\hat \theta^s - \theta_0) & = - \sqrt{n}D_n^{-1}\nabla^2\bbM_n^s(\theta^*)^{-1}\nabla \bbM_n^s(\theta_0)  \notag \\
\label{eq:taylor_main} & = \begin{pmatrix} \nabla^2\bbM_n^{s, \gamma}(\theta^*) & \sqrt{\sigma_n}\nabla^2\bbM_n^{s, \gamma \psi}(\theta^*) \\
\sqrt{\sigma_n}\nabla^2\bbM_n^{s, \gamma \psi}(\theta^*) & \sigma_n\nabla^2\bbM_n^{s, \psi}(\theta^*)\end{pmatrix}^{-1}\begin{pmatrix} \sqrt{n}\nabla \bbM_n^{s, \gamma}(\theta_0) \\ \sqrt{n\sigma_n}\nabla \bbM_n^{s, \psi}(\theta_0)\end{pmatrix}
\end{align}
where $\gamma = (\beta, \delta) \in \reals^{2p}$. The following lemma establishes the asymptotic properties of $\nabla \bbM_n^s(\theta_0)$: 
\begin{lemma}[Asymptotic Normality of $\nabla \bbM_n^s(\theta_0)$]
\label{asymp-normality}
\label{asymp-normality}
Under assumption \ref{eq:assm} we have: 
\begin{align*}
\sqrt{n}\nabla \bbM_n^{s, \gamma}(\theta_0) \implies \cN\left(0, 4V^{\gamma}\right) \,,\\
\sqrt{n\sigma_n}\nabla \bbM_n^{s, \psi}(\theta_0) \implies \cN\left(0, V^{\psi}\right) \,.
\end{align*} 
for some n.n.d. matrices $V^{\gamma}$ and $V^{\psi}$ which is mentioned explicitly in the proof. Further more $\sqrt{n}\nabla \bbM_n^{s, \gamma}(\theta_0)$ and $\sqrt{n\sigma_n}\nabla \bbM_n^{s, \psi}(\theta_0)$ are asymptotically independent. 
\end{lemma}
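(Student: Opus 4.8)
The plan is to differentiate $f^s_\theta$ at $\theta_0$, split each block of $\nabla\bbM_n^s(\theta_0)$ into an empirical-process term plus a deterministic bias (nonzero because $\theta_0$ does not minimise $\bbM^s$), show the bias vanishes after the stated rescaling, and then run a triangular-array central limit theorem; the only delicate point is the Lyapunov verification for the $\psi$-block, which is where the condition $n\sigma_n\to\infty$ enters. First, inserting $Y-X^\top\beta_0=X^\top\delta_0\mathds{1}_{Q^\top\psi_0>0}+\eps$ into the derivatives of $f^s_\theta$ (with $K=\Phi$, $K'=\phi$, and the $\psi$-derivative taken in the free coordinates $\tilde\psi$ so that the chain rule produces $\tilde Q$) gives
\begin{align*}
\nabla_\beta f^s_{\theta_0}&=-2\eps X+2(X^\top\delta_0)X\big(\Phi(Q^\top\psi_0/\sigma_n)-\mathds{1}_{Q^\top\psi_0>0}\big),\\
\nabla_\delta f^s_{\theta_0}&=2X\big(X^\top\delta_0\,\mathds{1}_{Q^\top\psi_0\le0}-\eps\big)\Phi(Q^\top\psi_0/\sigma_n),\\
\nabla_{\tilde\psi} f^s_{\theta_0}&=-\big((X^\top\delta_0)^2\s(Q^\top\psi_0)+2\eps X^\top\delta_0\big)\,\sigma_n^{-1}\phi(Q^\top\psi_0/\sigma_n)\,\tilde Q,
\end{align*}
so that $\sqrt n\,\nabla\bbM_n^{s,\gamma}(\theta_0)=\sqrt n(\bbP_n-\bbE)\nabla_\gamma f^s_{\theta_0}+\sqrt n\,\bbE\nabla_\gamma f^s_{\theta_0}$, and similarly with $\sqrt{n\sigma_n}$ for the $\psi$-block.

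To kill the bias, note the terms linear in $\eps$ have zero mean ($\eps\indep(X,Q)$, $\bbE\eps=0$); for the remaining terms, condition on $\tilde Q$ and substitute $t=Q^\top\psi_0=\sigma_n u$. Assumption~1 (conditional density $f_0(\cdot\mid\tilde Q)$ bounded with bounded derivative) and the bounded Lipschitz conditional moments of $X$ (Assumptions~2,~5) justify a one-term Taylor expansion in $\sigma_n$ of the resulting integrand; since $\Phi(u)-\mathds{1}_{u>0}$ and $\s(u)\phi(u)$ are odd and integrable and $\mathds{1}_{u\le0}\Phi(u)$ is integrable, the leading $u$-integrals vanish except for the $\nabla_\delta$ one, giving $\bbE\nabla_\gamma f^s_{\theta_0}=O(\sigma_n)$ and $\bbE\nabla_{\tilde\psi}f^s_{\theta_0}=\sigma_n^{-1}O(\sigma_n^2)=O(\sigma_n)$; hence $\sqrt n\,\bbE\nabla_\gamma f^s_{\theta_0}=O(\sqrt{n\sigma_n^2})$ and $\sqrt{n\sigma_n}\,\bbE\nabla_{\tilde\psi}f^s_{\theta_0}=O(\sqrt{n\sigma_n^2}\cdot\sqrt{\sigma_n})$, both $o(1)$ since $n\sigma_n^2\to 0$. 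For the variances, $\Phi(Q^\top\psi_0/\sigma_n)\to\mathds{1}_{Q^\top\psi_0>0}$ a.s.\ with $\Phi\le 1$, so dominated convergence ($\bbE\|X\|^4<\infty$ by Assumption~5, $\bbE\eps^2<\infty$) gives $\var(\nabla_\gamma f^s_{\theta_0})\to 4V^\gamma$ with $V^\gamma=\bbE[\eps^2]\,\bbE\!\begin{pmatrix}XX^\top & XX^\top\mathds{1}_{Q^\top\psi_0>0}\\ XX^\top\mathds{1}_{Q^\top\psi_0>0} & XX^\top\mathds{1}_{Q^\top\psi_0>0}\end{pmatrix}$, while the $t=\sigma_n u$ substitution (the $\eps$ cross-term dropping by independence) yields $\sigma_n\var(\nabla_{\tilde\psi}f^s_{\theta_0})\to V^\psi$ with
\[
V^\psi=\frac{1}{2\sqrt\pi}\,\bbE\!\left[\tilde Q\tilde Q^\top\,\bbE\!\big[(X^\top\delta_0)^4+4\bbE[\eps^2](X^\top\delta_0)^2\mid Q^\top\psi_0=0,\tilde Q\big]\,f_0(0\mid\tilde Q)\right]
\]
(using $\int\phi^2=1/(2\sqrt\pi)$). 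The same substitution shows $\mathrm{Cov}(\nabla_\gamma f^s_{\theta_0},\nabla_{\tilde\psi}f^s_{\theta_0})=O(1)$, so the off-diagonal block of the joint covariance of $(\sqrt n\,\nabla\bbM_n^{s,\gamma}(\theta_0),\sqrt{n\sigma_n}\,\nabla\bbM_n^{s,\psi}(\theta_0))$ is $\sqrt{\sigma_n}\cdot O(1)\to 0$, which will give the asymptotic independence.

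It remains to run the CLT. By Cram\'er--Wold it suffices to prove, for all $a\in\reals^{2p}$ and $b\in\reals^{d-1}$, that $n^{-1/2}\sum_{i=1}^n W_{n,i}\implies\cN(0,a^\top(4V^\gamma)a+b^\top V^\psi b)$, where $W_{n,i}=a^\top\nabla_\gamma f^s_{\theta_0}+\sqrt{\sigma_n}\,b^\top\nabla_{\tilde\psi}f^s_{\theta_0}$ evaluated at observation $i$ and centred; $\var(W_{n,1})$ converges to the target by the previous step. The $\gamma$-part of $W_{n,1}$ has an envelope free of $n$ with finite second moment, so it is uniformly Lindeberg-negligible. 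The $\psi$-part has envelope of order only $\sigma_n^{-1}$; however, redoing the $t=\sigma_n u$ calculation at order $2+\kappa$ for a small $\kappa>0$ (which requires finite $(4+2\kappa)$-th moment of $\|X\|$ and finite $(2+\kappa)$-th moments of $\eps$ and $\tilde Q$, a mild strengthening of Assumption~5) gives $\bbE|\sqrt{\sigma_n}\,b^\top\nabla_{\tilde\psi}f^s_{\theta_0}|^{2+\kappa}=\sigma_n^{1+\kappa/2}\,O(\sigma_n^{-1-\kappa})=O(\sigma_n^{-\kappa/2})$, so the Lyapunov ratio is
\[
n^{-\kappa/2}\,\bbE|W_{n,1}|^{2+\kappa}=O\big((n\sigma_n)^{-\kappa/2}\big)\longrightarrow 0,
\]
exactly because $n\sigma_n\to\infty$. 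Lyapunov's CLT then gives the displayed limit for every $a,b$, i.e.\ joint convergence to $\cN(0,\mathrm{diag}(4V^\gamma,V^\psi))$, which contains both marginals of the lemma and their asymptotic independence.

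The hard part is this last step. Everything else is the usual change-of-variables bookkeeping; the content is that the $\psi$-score concentrates, up to Gaussian tails, on the slab $\{|Q^\top\psi_0|\lesssim\sigma_n\}$, so although it is pointwise of size $\sigma_n^{-1}$ its $(2+\kappa)$-th moment only grows like $\sigma_n^{-(1+\kappa)}$---the thin slab ``refunds'' one power of $\sigma_n$, and that single power is precisely what makes the Lyapunov ratio vanish under $n\sigma_n\to\infty$ rather than the stronger $n\sigma_n^2\to\infty$ of \cite{seo2007smoothed}. The remaining care is control of the remainders in the $t=\sigma_n u$ expansions, supplied by the differentiability in Assumption~1 and the Lipschitz conditional moments in Assumptions~2 and~5.
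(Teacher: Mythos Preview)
Your proof is correct and follows essentially the same route as the paper's: split each block of $\nabla\bbM_n^s(\theta_0)$ into a centred empirical average plus a deterministic bias, kill the bias by the $t=\sigma_n u$ change of variables together with the oddness of $\Phi(u)-\mathds{1}_{u>0}$ and $\s(u)\phi(u)$, compute the limiting variances, and finish with a triangular-array CLT. The paper does the $\gamma$- and $\psi$-blocks separately, invokes Lemma~6 of \cite{horowitz1992smoothed} for the $\psi$-CLT, and then shows the cross-covariance tends to zero by direct calculation; you package the same content more compactly via Cram\'er--Wold with an explicit Lyapunov check. Two small remarks: (i) your Lyapunov argument needs $\bbE\|X\|^{4+2\kappa}<\infty$ and a $(2+\kappa)$-th moment for $\eps$, which is slightly more than Assumption~\ref{eq:assm}(5) gives---the paper incurs the analogous hidden cost by appealing to Horowitz's lemma, so this is not a defect of your approach but worth stating; (ii) your closing comment that $n\sigma_n\to\infty$ here replaces Seo--Linton's $n\sigma_n^2\to\infty$ slightly overstates the role of this lemma: as the paper notes, Seo--Linton's stronger bandwidth condition was used only to obtain $\|\hat\psi^s-\psi_0\|/\sigma_n\to 0$ (the content of Lemma~\ref{lem:rate_smooth}), not for this CLT, which already held under $n\sigma_n\to\infty$.
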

\noindent
Next, we analyze the convergence of $\nabla^2 \bbM_n^s(\theta^*)$ which is stated in the following lemma: 
\begin{lemma}[Convergence in Probability of $\nabla^s \bbM_n^s(\th^*)$]
\label{conv-prob}
Under Assumption \eqref{eq:assm}, for any random sequence $\breve{\theta} = \left(\breve{\beta}, \breve{\delta}, \breve{\psi}\right)$ such that $\breve{\beta} \overset{p}{\to} \beta_0, \breve{\delta} \overset{p}{\to} \delta_0, \|\breve{\psi} - \psi_0\|/\sigma_n \overset{P} \rightarrow 0$, we have: 
\begin{align*}
\nabla^2_{\gamma} \bbM_n^s(\breve{\theta}) & \overset{p}{\longrightarrow} 2Q^{\gamma} \,, \\
\sqrt{\sigma_n}\nabla^2_{\psi \gamma} \bbM_n^s(\breve{\theta}) & \overset{p}{\longrightarrow} 0 \,, \\
\sigma_n \nabla^2_{\psi} \bbM_n^s(\breve{\theta}) & \overset{p}{\longrightarrow} Q^{\psi} \,.
\end{align*}
for some matrices $Q^{\gamma}, Q^{\psi}$ mentioned explicitly in the proof. This, along with equation \eqref{eq:taylor_main}, establishes: 
\begin{align*}
\sqrt{n}\left(\hat \gamma^s - \gamma_0\right) & \overset{\mathscr{L}}{\implies} \cN\left(0, Q^{\gamma^{-1}}V^{\gamma}Q^{\gamma^{-1}}\right) \,, \\
\sqrt{n/\sigma_n}\left(\hat \psi^s - \psi_0\right) & \overset{\mathscr{L}}{\implies} \cN\left(0, Q^{\psi^{-1}}V^{\psi}Q^{\psi^{-1}}\right) \,.
\end{align*}
where as before $\hat \gamma^s = (\hat \beta^s, \hat \delta^s)$. 
\end{lemma}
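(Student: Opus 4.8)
\noindent First I would write $u=Q^{\top}\psi/\sigma_n$ and differentiate $f^s_\theta$ twice; with $K=\Phi$ (so $K'=\phi$, $K''=\phi'$, $\phi'(x)=-x\phi(x)$) this gives
\begin{gather*}
\nabla^2_\gamma f^s_\theta = 2\begin{pmatrix} XX^{\top} & XX^{\top}K(u) \\ XX^{\top}K(u) & XX^{\top}K(u)\end{pmatrix}, \\
\nabla^2_{\beta\psi} f^s_\theta = 2(X^{\top}\delta)\,\frac{\phi(u)}{\sigma_n}\,XQ^{\top}, \qquad \nabla^2_{\delta\psi} f^s_\theta = -2\bigl(Y-X^{\top}\beta-X^{\top}\delta\bigr)\,\frac{\phi(u)}{\sigma_n}\,XQ^{\top}, \\
\nabla^2_\psi f^s_\theta = \Bigl(-2(Y-X^{\top}\beta)(X^{\top}\delta) + (X^{\top}\delta)^2\Bigr)\,\phi'(u)\,\frac{QQ^{\top}}{\sigma_n^2}.
\end{gather*}
For each block the plan is to split $\bbP_n[\,\cdot\,]$ evaluated at $\breve\theta$ into the centered empirical term $(\bbP_n-\bbP)[\,\cdot\,]$ and the deterministic term $\bbP[\,\cdot\,]\big|_{\theta=\breve\theta}$, and bound both uniformly over a shrinking neighbourhood $N_n=\{\theta:\|\gamma-\gamma_0\|\le\epsilon_n,\ \|\psi-\psi_0\|\le\epsilon_n\sigma_n\}$, with $\epsilon_n\downarrow0$ chosen slowly enough that $\breve\theta\in N_n$ with probability tending to one; over such a neighbourhood the relevant classes have bounded covering numbers, so uniformity costs at most a logarithmic factor. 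The $\gamma\gamma$ block is the easy one: its envelope $2(1+K(u))\|X\|^2$ has bounded conditional second moment by Assumption~\ref{eq:assm}(5), so a uniform law of large numbers applies, and since $K(Q^{\top}\breve\psi/\sigma_n)\to\mathds{1}_{Q^{\top}\psi_0>0}$ a.s.\ on $\{Q^{\top}\psi_0\ne0\}$ while $\bbE[\|X\|^2\mathds{1}_{|Q^{\top}\psi_0|\le t}]\to0$ as $t\downarrow0$ (Assumption~\ref{eq:assm}(1)), dominated convergence identifies the limit as $2Q^\gamma$, where $Q^\gamma$ is the $2p\times2p$ matrix with $(1,1)$ block $\bbE[XX^{\top}]$ and all three remaining blocks equal to $\bbE[XX^{\top}\mathds{1}_{Q^{\top}\psi_0>0}]$ (positive definite by Assumptions~\ref{eq:assm}(2),(4)).

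The cross and $\psi\psi$ blocks carry the real content. The mechanism is that $\sigma_n^{-1}\phi(\cdot/\sigma_n)$ is an approximate identity concentrated on the hyperplane $\{Q^{\top}\psi_0=0\}$: conditioning on $\tilde Q$, writing $w=Q^{\top}\psi_0$ and substituting $w=\sigma_n s$, any quantity of the form $\sigma_n^{-1}\bbE[h(X,\eps,Q)\,\phi^{(j)}(Q^{\top}\psi_0/\sigma_n)]$ tends to $\bbE_{\tilde Q}\bigl[\bbE[h\mid\tilde Q,Q^{\top}\psi_0=0]\,f_0(0\mid\tilde Q)\bigr]\int\phi^{(j)}(s)\,ds$, using the density and moment/Lipschitz bounds of Assumption~\ref{eq:assm}(1),(2),(5). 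For the cross block, $\sqrt{\sigma_n}\,\nabla^2_{\beta\psi}f^s_{\theta_0}$ and $\sqrt{\sigma_n}\,\nabla^2_{\delta\psi}f^s_{\theta_0}$ equal $\sigma_n^{-1/2}\phi(u)$ times an $XQ^{\top}$-type factor with bounded conditional moments, hence have population mean $O(\sqrt{\sigma_n})\to0$ and second moment $O(1)$, so $\sqrt{\sigma_n}\,\nabla^2_{\psi\gamma}\bbM_n^s(\breve\theta)\overset{p}{\to}0$. For the $\psi\psi$ block,
\[
\sigma_n\,\nabla^2_\psi f^s_{\theta_0} = -\Bigl(-2(Y-X^{\top}\beta_0)(X^{\top}\delta_0)+(X^{\top}\delta_0)^2\Bigr)\,\frac{Q^{\top}\psi_0}{\sigma_n^2}\,\phi\!\left(\frac{Q^{\top}\psi_0}{\sigma_n}\right)QQ^{\top},
\]
and the crucial observation is that, since $Y-X^{\top}\beta_0=\delta_0^{\top}X\,\mathds{1}_{Q^{\top}\psi_0>0}+\eps$ with $\eps\indep(X,Q)$, the bracketed coefficient has conditional mean $-\s(Q^{\top}\psi_0)\,\delta_0^{\top}\bbE[XX^{\top}\mid Q]\delta_0$ given $Q$ --- it \emph{jumps across the hyperplane}. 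Feeding this into the approximate-identity computation, the odd factor $w\,\phi(w/\sigma_n)$ is paired against $\s(w)$, producing $\int|s|\phi(s)\,ds=\sqrt{2/\pi}\ne0$, so $\sigma_n\,\nabla^2_\psi\bbM_n^s(\theta_0)\overset{p}{\to}Q^\psi$ with
\[
Q^\psi = \sqrt{\tfrac{2}{\pi}}\;\bbE_{\tilde Q}\!\left[\delta_0^{\top}\,\bbE\!\left[XX^{\top}\mid\tilde Q,\,Q^{\top}\psi_0=0\right]\delta_0\;Q_0Q_0^{\top}\;f_0(0\mid\tilde Q)\right],
\]
$Q_0$ denoting $Q$ on $\{Q^{\top}\psi_0=0\}$ (positive definite by Assumptions~\ref{eq:assm}(2),(4)). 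Two checks remain here, and they are exactly where the bandwidth hypotheses bite: the empirical fluctuation has variance of order $\sigma_n^2\,\bbE[\sigma_n^{-4}(Q^{\top}\psi_0)^2\phi(Q^{\top}\psi_0/\sigma_n)^2\|X\|^4\|Q\|^4]/n=O(1/(n\sigma_n))\to0$ \emph{precisely because $n\sigma_n\to\infty$}; and replacing $\theta_0$ by $\breve\theta$ shifts the kernel argument by $Q^{\top}(\breve\psi-\psi_0)/\sigma_n$ and the coefficient by $O(\|\breve\beta-\beta_0\|+\|\breve\delta-\delta_0\|)$ times controlled factors, both negligible after integration because $\|\breve\psi-\psi_0\|/\sigma_n\overset{p}{\to}0$.

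With the three blocks in hand, the Hessian matrix in \eqref{eq:taylor_main} evaluated at the Taylor midpoint $\theta^*$ --- which lies between $\hat\theta^s$ and $\theta_0$ and hence satisfies $\beta^*\overset{p}{\to}\beta_0$, $\delta^*\overset{p}{\to}\delta_0$ and $\|\psi^*-\psi_0\|/\sigma_n\le\|\hat\psi^s-\psi_0\|/\sigma_n=o_p(1)$ by the consistency of $\hat\theta^s$ and the localization bound established via the other lemmas of this proof --- converges in probability to the block-diagonal matrix $\mathrm{diag}(2Q^\gamma,\,Q^\psi)$, which is invertible. Combining with Lemma~\ref{asymp-normality} and Slutsky's theorem (continuity of matrix inversion at an invertible point) in \eqref{eq:taylor_main} gives
\[
\sqrt{n}\,D_n^{-1}(\hat\theta^s-\theta_0)\overset{\mathscr{L}}{\implies}\begin{pmatrix}(2Q^\gamma)^{-1}\cN(0,4V^\gamma)\\ (Q^\psi)^{-1}\cN(0,V^\psi)\end{pmatrix},
\]
the two coordinate blocks being asymptotically independent (inherited from Lemma~\ref{asymp-normality} and the block-diagonal limit). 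Reading off the blocks and recalling that $D_n^{-1}$ scales the $\psi$-coordinates by $\sigma_n^{-1/2}$ yields $\sqrt{n}(\hat\gamma^s-\gamma_0)\overset{\mathscr{L}}{\implies}\cN(0,Q^{\gamma^{-1}}V^\gamma Q^{\gamma^{-1}})$ and $\sqrt{n/\sigma_n}(\hat\psi^s-\psi_0)\overset{\mathscr{L}}{\implies}\cN(0,Q^{\psi^{-1}}V^\psi Q^{\psi^{-1}})$.

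The main obstacle is the $\psi\psi$ block. One has to notice that the relevant coefficient is \emph{discontinuous} across the change hyperplane --- so the $\sigma_n$-rescaled Hessian has a genuinely nonzero limit, and the naive ``plug in the smooth coefficient'' heuristic gives the wrong answer $0$ --- then correctly evaluate the kernel integral $\int|s|\phi(s)\,ds$, and finally handle simultaneously (i) the empirical fluctuation, controllable only because $n\sigma_n\to\infty$, and (ii) the error from centering at the random $\breve\theta$ rather than $\theta_0$, which is exactly what the improved localization $\|\breve\psi-\psi_0\|/\sigma_n=o_p(1)$ is there to absorb.
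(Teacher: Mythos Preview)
Your approach is essentially the same as the paper's: a law-of-large-numbers argument for the $\gamma\gamma$ block and the kernel ``approximate identity'' mechanism for the cross and $\psi\psi$ blocks, with the discontinuity of the regression function across the hyperplane supplying the nondegenerate limit. The paper's own proof is in fact much terser than yours---it writes out the $\gamma\gamma$ limit explicitly and then simply cites Lemmas~5 and~6 of Seo--Linton for the other two blocks---so your self-contained account, including the explicit computation $\int_{-\infty}^{\infty}(-\mathrm{sign}(t))K''(t)\,dt=\int|t|\phi(t)\,dt=\sqrt{2/\pi}$ and the variance check $O(1/(n\sigma_n))$ that uses $n\sigma_n\to\infty$, is a genuine elaboration rather than a deviation. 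The paper records the limit as
\[
Q^{\psi}=\Bigl(\int_{-\infty}^{\infty}-\mathrm{sign}(t)K''(t)\,dt\Bigr)\,\bbE\!\left[\delta_0^{\top}g\!\left(-\tilde Q^{\top}\tilde\psi_0,\tilde Q\right)\delta_0\,\tilde Q\tilde Q^{\top}\,f_0(0\mid\tilde Q)\right],
\]
which agrees with yours once the kernel constant is evaluated.

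One notational slip to fix: because the first coordinate of $\psi$ is pinned at $1$ for identifiability, differentiation in $\psi$ is with respect to $\tilde\psi\in\reals^{d-1}$ only, so all your $QQ^{\top}$ and $XQ^{\top}$ factors in $\nabla^2_\psi f^s_\theta$ and $\nabla^2_{\gamma\psi}f^s_\theta$ should be $\tilde Q\tilde Q^{\top}$ and $X\tilde Q^{\top}$, and your $Q_0Q_0^{\top}$ in the limit should be $\tilde Q\tilde Q^{\top}$. This is cosmetic but affects the dimension of $Q^{\psi}$.
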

It will be shown later that the condition $\|\breve{\psi}_n - \psi_0\|/\sigma_n \overset{P} \rightarrow 0$ needed in Lemma \ref{conv-prob} holds for the (random) sequence $\psi^*$, the intermediate point in the Taylor expansion. Then, combining Lemma \ref{asymp-normality} and Lemma \ref{conv-prob} we conclude the proof of Theorem \ref{thm:regression}. 
Observe that, to show $\left\|\psi^* - \psi_0 \right\| = o_P(\sigma_n)$, it suffices to to prove that $\left\|\hat \psi^s - \psi_0 \right\| = o_P(\sigma_n)$. Towards that direction, we have following lemma: 

\begin{lemma}[Rate of convergence]
\label{lem:rate_smooth}
Under Assumption \ref{eq:assm} and our choice of kernel and bandwidth, 
$$
n^{2/3}\sigma_n^{-1/3} d^2_*\left(\hat \theta^s, \theta_0^s\right) = O_P(1) \,,
$$
where 
\begin{align*}
d_*^2(\th, \th_0^s) & =  \|\beta - \beta_0^s\|^2 + \|\delta - \delta_0^s\|^2 \\
& \qquad \qquad + \frac{\|\psi - \psi_0^s\|^2}{\sigma_n} \mathds{1}_{\|\psi - \psi_0^s\| \le \cK\sigma_n} + \|\psi - \psi_0^s\| \mathds{1}_{\|\psi - \psi_0^s\| > \cK\sigma_n}  \,.
\end{align*}
for some specific constant $\cK$. (This constant will be mentioned precisely in the proof). Hence as $n\sigma_n \to \infty$, we have $n^{2/3}\sigma_n^{-1/3} \gg \sigma_n^{-1}$ which implies $\|\hat \psi^s - \psi_0^s\|/\sigma_n  \overset{P}  \longrightarrow 0 \,.$
\end{lemma}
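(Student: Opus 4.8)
The plan is to obtain the rate by the standard peeling argument for $M$-estimators (e.g.\ Theorem 3.2.5 of van der Vaart and Wellner), applied to the smoothed criterion $\bbM_n^s$, its minimizer $\hat\theta^s$, the population minimizer $\theta_0^s=(\beta_0^s,\delta_0^s,\psi_0^s)$ of $\bbM^s$, and the metric $d_*$. Two ingredients are needed: a \emph{curvature} bound $\bbM^s(\theta)-\bbM^s(\theta_0^s)\gtrsim d_*^2(\theta,\theta_0^s)$ on a fixed neighborhood of $\theta_0^s$, and a \emph{modulus} bound $\bbE\sup_{d_*(\theta,\theta_0^s)\le\delta}\bigl|\sqrt{n}(\bbP_n-\bbP)(f^s_\theta-f^s_{\theta_0^s})\bigr|\le\phi_n(\delta)$ with $\delta\mapsto\phi_n(\delta)/\delta^{\alpha}$ non-increasing for some $\alpha<2$. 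A preliminary step records $d_*(\hat\theta^s,\theta_0^s)\overset{P}{\to}0$: uniform convergence $\sup_{\theta}|\bbM_n^s(\theta)-\bbM^s(\theta)|\overset{P}{\to}0$, together with $\bbM^s\to\bbM$ and the well-separatedness of $\psi_0$ as minimizer of $\bbM$ (which forces $\psi_0^s\to\psi_0$), gives consistency of $\hat\theta^s$ for $\theta_0^s$ and lets us localize to the neighborhood where the curvature bound holds and where $\|\psi-\psi_0^s\|\le r$, so that Assumption \ref{eq:assm}(3) applies.

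For the curvature bound, Taylor-expand $\bbM^s$ about $\theta_0^s$. In the $\gamma=(\beta,\delta)$ coordinates the Hessian is, up to $o(1)$, $2$ times the symmetric block matrix with $(\beta,\beta)$-block $\bbE[XX^\top]$ and with $(\delta,\delta)$- and $(\beta,\delta)$-blocks both equal to $\bbE[XX^\top K(Q^\top\psi_0^s/\sigma_n)]$; this block matrix is uniformly positive definite --- its $(\delta,\delta)$-block tends to $\bbE[XX^\top\mathds{1}_{Q^\top\psi_0 > 0}]$ and its Schur complement to $\bbE[XX^\top\mathds{1}_{Q^\top\psi_0 \le 0}]$, both bounded below by positive multiples of the identity via Assumptions \ref{eq:assm}(2) and \ref{eq:assm}(4) --- which produces the $\|\beta-\beta_0^s\|^2+\|\delta-\delta_0^s\|^2$ part of $d_*^2$ (and recovers the matrix $2Q^\gamma$ of Lemma \ref{conv-prob}). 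In the $\psi$ coordinates both derivatives fall on $K(Q^\top\psi/\sigma_n)$, producing the factor $\sigma_n^{-2}K''(Q^\top\psi/\sigma_n)\,QQ^\top$; integrating against the conditional density of $Q^\top\psi$ given $\tilde Q$ (bounded with bounded derivative by Assumption \ref{eq:assm}(1)) and rescaling by $\sigma_n$ shows this block equals $\sigma_n^{-1}$ times a positive-definite matrix (nondegeneracy on $\{Q^\top\psi_0=0\}$ via Assumption \ref{eq:assm}(2), finiteness via $m_2$ in \ref{eq:assm}(5)), yielding the $\|\psi-\psi_0^s\|^2/\sigma_n$ term --- valid while $\|\psi-\psi_0^s\|\lesssim\sigma_n$, where the quadratic approximation to $K(Q^\top\psi/\sigma_n)$ is in force. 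For $\|\psi-\psi_0^s\|>\cK\sigma_n$ I would instead bound $\bbM^s(\theta)-\bbM^s(\theta_0^s)$ below by a constant multiple of $\bbP(\s(Q^\top\psi)\ne\s(Q^\top\psi_0))\gtrsim\|\psi-\psi_0^s\|$ via Assumption \ref{eq:assm}(3) (absorbing the $O(\sigma_n)$ gap $\|\psi_0^s-\psi_0\|$), giving the $\|\psi-\psi_0^s\|\,\mathds{1}_{\|\psi-\psi_0^s\|>\cK\sigma_n}$ term; the constant $\cK$ is chosen precisely so that the two regimes splice into a single bound $\gtrsim d_*^2$. All constants must be uniform in $n$, which is why Assumptions \ref{eq:assm}(1),(2),(5) are posited on a whole neighborhood of $\psi_0$ containing $\psi_0^s$ for large $n$.

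For the modulus, split $f^s_\theta-f^s_{\theta_0^s}$ into a part smooth (quadratic) in $(\beta,\delta)$, of $L_2(\bbP)$-size $\lesssim\|\gamma-\gamma_0^s\|$ (the ordinary parametric piece), and the delicate part $\{-2(Y-X^\top\beta)(X^\top\delta)+(X^\top\delta)^2\}\{K(Q^\top\psi/\sigma_n)-K(Q^\top\psi_0^s/\sigma_n)\}$. The key estimate is the deliberately crude $L_2(\bbP)$ bound on the delicate part, $\bbE\bigl[\{-2(Y-X^\top\beta)(X^\top\delta)+(X^\top\delta)^2\}^2\{K(Q^\top\psi/\sigma_n)-K(Q^\top\psi_0^s/\sigma_n)\}^2\bigr]\lesssim\|\psi-\psi_0^s\|$: conditioning on $Q$, the first factor has bounded conditional second moment (Assumption \ref{eq:assm}(5) and compactness of $\Theta$), the squared kernel difference is at most its absolute value, and $\bbE|K(Q^\top\psi/\sigma_n)-K(Q^\top\psi_0^s/\sigma_n)|\lesssim\|\psi-\psi_0^s\|$ because the $\bbP$-mass between the close hyperplanes $\{Q^\top\psi=t\sigma_n\}$ and $\{Q^\top\psi_0^s=t\sigma_n\}$ is $\lesssim\|\psi-\psi_0^s\|$ uniformly in $t$ by the bounded conditional density of Assumption \ref{eq:assm}(1). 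On the shell $d_*(\theta,\theta_0^s)\le\delta$ one has $\|\psi-\psi_0^s\|\le\delta^2$ in the far zone and $\|\psi-\psi_0^s\|=d_*\sqrt{\sigma_n}\le\delta\sqrt{\sigma_n}$ in the near zone, so the $L_2(\bbP)$-diameter of the shell is $\lesssim\delta+\sigma_n^{1/4}\sqrt{\delta}$. Since $\{K(Q^\top\psi/\sigma_n):\psi\}$ is VC-subgraph (a fixed monotone transform of halfspace indicators), a Dudley/Bernstein maximal inequality --- after verifying that the envelope contribution, of order $\delta/\sqrt{n\sigma_n}$ up to logarithms, is of smaller order once $n\sigma_n\to\infty$ --- gives $\phi_n(\delta)\lesssim(\delta+\sigma_n^{1/4}\sqrt{\delta})\sqrt{\log(1/\delta)}$, which for the relevant range of $\delta$ is $\asymp\sigma_n^{1/4}\sqrt{\delta}$ up to logarithmic factors.

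Feeding the curvature and modulus bounds into the rate theorem, the rate $r_n$ is the largest solution of $r_n^2\phi_n(1/r_n)\lesssim\sqrt{n}$, i.e.\ $r_n^{3/2}\sigma_n^{1/4}\lesssim\sqrt{n}$ up to logarithmic factors, so $r_n^2\asymp n^{2/3}\sigma_n^{-1/3}$ and $n^{2/3}\sigma_n^{-1/3}d_*^2(\hat\theta^s,\theta_0^s)=O_P(1)$. The same peeling bound forces $\bbP(\|\hat\psi^s-\psi_0^s\|>\cK\sigma_n)\to0$, since on that event $d_*^2\ge\cK\sigma_n$, and $n^{2/3}\sigma_n^{-1/3}\gg\sigma_n^{-1}$ (equivalently $n\sigma_n\to\infty$) makes this incompatible with $d_*^2=O_P(n^{-2/3}\sigma_n^{1/3})$; on the complementary event $d_*^2=\|\psi-\psi_0^s\|^2/\sigma_n$, whence $\|\hat\psi^s-\psi_0^s\|^2/\sigma_n=O_P(n^{-2/3}\sigma_n^{1/3})$, i.e.\ $\|\hat\psi^s-\psi_0^s\|/\sigma_n=O_P\bigl((n\sigma_n)^{-1/3}\bigr)=o_P(1)$, as claimed. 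I expect the main obstacle to be assembling the modulus with the correct joint dependence on $\delta$ and $\sigma_n$: the smoothed class degenerates as $\sigma_n\to0$ (Lipschitz constant $\sigma_n^{-1}$, shell envelope of $L_2$-order $\sigma_n^{-1/2}$), and these powers of $\sigma_n$ must be tracked through the peeling so the rate still beats $\sigma_n^{-1}$ under the weak assumption $n\sigma_n\to\infty$ --- the stronger condition $n\sigma_n^2\to\infty$ of Seo and Linton is exactly what would make the envelope term automatically negligible. A secondary subtlety is that $\theta_0^s$ itself drifts with $n$, so curvature and entropy bounds must hold uniformly over a neighborhood of $\psi_0$, which is built into Assumption \ref{eq:assm}.
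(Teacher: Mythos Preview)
Your proposal is correct and follows essentially the same route as the paper: the two-regime curvature (quadratic in $\|\psi-\psi_0^s\|^2/\sigma_n$ via a Hessian expansion when $\|\psi-\psi_0^s\|\le\cK\sigma_n$, linear via the wedge-probability Assumption \ref{eq:assm}(3) otherwise), combined with the envelope bound $\bbE[F_\zeta^2]\lesssim\zeta\sqrt{\sigma_n}$ in the near case and $\lesssim\zeta^2$ in the far case (obtained exactly via your trick $|K-K|^2\le|K-K|$ and the bounded conditional density), plugged into the standard $M$-estimation rate theorem to solve $r_n^{3/2}\sigma_n^{1/4}\lesssim\sqrt n$. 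The only cosmetic differences are that the paper uses the BUEI property to bound the entropy integral by a constant (so no $\sqrt{\log(1/\delta)}$ factor is needed) and applies Lemma 2.14.1 of \cite{vdvw96} directly rather than invoking a Bernstein-type refinement.
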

%
%
%
\noindent
The above lemma establishes $\|\hat \psi^s - \psi_0^s\|/\sigma_n = o_p(1)$ but our goal is to show that $\|\hat \psi^s - \psi_0\|/\sigma_n = o_p(1)$. Therefore, we further need $\|\psi^s_0 - \psi_0\|/\sigma_n \rightarrow 0$ which is demonstrated in the following lemma:

\begin{lemma}[Convergence of population minimizer]
\label{bandwidth}
Under Assumption \ref{eq:assm} and our choice of kernel and bandwidth, we have: $\|\psi^s_0 - \psi_0\|/\sigma_n \rightarrow 0$. 
\end{lemma}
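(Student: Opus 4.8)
The statement is deterministic, so the plan is a perturbation analysis relating the minimiser $\theta_0^s=(\gamma_0^s,\psi_0^s)$ of $\bbM^s$ to the minimiser $\theta_0$ of $\bbM$, using $\bbM^s=\bbM+R_n$ with
$$
R_n(\theta)=\bbE\Big[\big\{-2(Y-X^\top\beta)X^\top\delta+(X^\top\delta)^2\big\}\big(K(Q^\top\psi/\sigma_n)-\mathds 1_{Q^\top\psi>0}\big)\Big].
$$
First I would record $\sup_{\theta\in\Theta}|R_n(\theta)|\to 0$: conditioning on $Q$, the prefactor has conditional $L^1$-norm bounded uniformly over $\gamma\in\Theta$ by Assumption \ref{eq:assm}(5), and $\bbE_Q\big[|K(Q^\top\psi/\sigma_n)-\mathds 1_{Q^\top\psi>0}|\big]=\sigma_n\,\bbE_{\tilde Q}\int|K(u)-\mathds 1_{u>0}|\,f_\psi(\sigma_n u\mid\tilde Q)\,du=O(\sigma_n)$ uniformly in $\psi$ by Assumption \ref{eq:assm}(1). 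Together with the fact that $\bbM$ has a unique, well-separated minimiser at $\theta_0$ — identifiability: on the two agreement regions the conditional variance of $X$ is non-degenerate (Assumption \ref{eq:assm}(2),(4)), forcing $\beta=\beta_0,\ \delta=\delta_0$, while on the symmetric-difference region $\delta_0\neq 0$ makes the excess risk strictly positive, forcing $\psi=\psi_0$ — the usual $M$-estimation consistency argument gives $\theta_0^s\to\theta_0$, which confines the rest of the analysis to an arbitrarily small fixed neighbourhood of $\theta_0$.

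The crux is a sharp evaluation of the $\psi$-block $\nabla_{\tilde\psi}\bbM^s$ (a $(d-1)$-vector, since the first coordinate of $\psi$ is fixed) of the stationarity equation $\nabla\bbM^s(\theta_0^s)=0$; write $v=\tilde\psi-\tilde\psi_0$, so $\|\psi-\psi_0\|=\|v\|$. The structural input is that, with the true model substituted, $a(Q;\gamma_0):=\bbE[-2(Y-X^\top\beta_0)X^\top\delta_0+(X^\top\delta_0)^2\mid Q]=-\s(Q^\top\psi_0)\,b(Q)$ with $b(Q)=\bbE[(X^\top\delta_0)^2\mid Q]\ge c_-\|\delta_0\|^2>0$ (Assumption \ref{eq:assm}(2)), and that conditioning on $(Q^\top\psi=t,\tilde Q)$ determines $Q$ and turns $\s(Q^\top\psi_0)$ into $\s(t-\tilde Q^\top v)$. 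Substituting $t=\sigma_n u$ and using $\int\s(u-s)\,K'(u)\,du=1-2K(s)$, I would obtain, on the neighbourhood,
$$
\nabla_{\tilde\psi}\bbM^s(\gamma,\psi)=\bbE_{\tilde Q}\Big[\tilde Q\,\bar b(0,\tilde Q)\,f_\psi(0\mid\tilde Q)\,\big(2K(\tilde Q^\top v/\sigma_n)-1\big)\Big]+E_n(\gamma,\psi),
$$
where $\bar b(t,\tilde Q)=\bbE[(X^\top\delta_0)^2\mid Q^\top\psi=t,\tilde Q]$ and $\|E_n(\gamma,\psi)\|\le C(\sigma_n+\|\gamma-\gamma_0\|)$ uniformly: the $O(\sigma_n)$ is the first-order Taylor remainder in $t$ of $\bar b(t,\tilde Q)f_\psi(t\mid\tilde Q)$ at $t=0$ (bounded first derivative via Assumption \ref{eq:assm}(1),(2),(5)), and the $O(\|\gamma-\gamma_0\|)$ because $a(Q;\gamma)$ is affine in $\gamma$ with conditionally bounded coefficients. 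Two consequences follow. (a) At $\psi_0$ ($v=0$) the main term is proportional to $2K(0)-1=0$ — this is precisely where centredness of $K=\Phi$ about the origin is used — so $\nabla_{\tilde\psi}\bbM^s(\gamma_0,\psi_0)=E_n(\gamma_0,\psi_0)=O(\sigma_n)$, whereas an off-centre kernel would leave an $O(1)$ bias here and give only $\|\psi_0^s-\psi_0\|=O(\sigma_n)$. (b) Pairing the main term with $v$ and using $x(2K(x/\sigma_n)-1)\ge(2K(1)-1)\min(x^2/\sigma_n,|x|)$ (monotonicity and concavity of $2K-1$ on $(0,\infty)$), together with $\bar b\ge c_-\|\delta_0\|^2$ and the estimate $\min(x^2/\sigma_n,|x|)\ge x^2/(\sigma_n+|x|)$, one obtains a curvature bound
$$
v^\top\big(\nabla_{\tilde\psi}\bbM^s(\gamma,\psi)-E_n(\gamma,\psi)\big)\ \ge\ c\,\min(\|v\|^2/\sigma_n,\ \|v\|),
$$
with $c>0$ a constant independent of the neighbourhood radius; positivity of $c$ rests on the non-degeneracy of a suitably weighted version of $\bbE_{\tilde Q}[f_0(0\mid\tilde Q)\,\tilde Q\tilde Q^\top]$, which is forced by Assumption \ref{eq:assm}(3).

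To finish I would handle the $\gamma$-block, which is routine: $\bbM$ is a strongly convex quadratic in $\gamma$ for each fixed $\psi$ (Assumption \ref{eq:assm}(2),(4)), with minimiser $\gamma^*(\psi)$ that is Lipschitz in $\psi$ and equals $\gamma_0$ at $\psi_0$, while $\nabla_\gamma R_n=O(\sigma_n)$ uniformly; hence $\nabla_\gamma\bbM^s(\theta_0^s)=0$ forces $\|\gamma_0^s-\gamma_0\|\le O(\sigma_n)+L\|\psi_0^s-\psi_0\|$. Dotting $\nabla_{\tilde\psi}\bbM^s(\theta_0^s)=0$ with $v_0^s:=\tilde\psi_0^s-\tilde\psi_0$ and applying the curvature bound gives $c\min(\|v_0^s\|^2/\sigma_n,\|v_0^s\|)\le C(\sigma_n+\|\gamma_0^s-\gamma_0\|)\|v_0^s\|$; since $\theta_0^s\to\theta_0$, the branch $\min=\|v_0^s\|$ would force $c\le C(\sigma_n+o(1))$ and is impossible for large $n$, so $\|v_0^s\|<\sigma_n$ eventually and then $\|v_0^s\|\le(C/c)\,\sigma_n(\sigma_n+\|\gamma_0^s-\gamma_0\|)$. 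Substituting the $\gamma$-bound and solving for $\|v_0^s\|$ yields $\|\psi_0^s-\psi_0\|=\|v_0^s\|=O(\sigma_n^2)$, which is strictly stronger than the claimed $\|\psi_0^s-\psi_0\|/\sigma_n\to 0$.

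The binding difficulty is the sharp evaluation of $\nabla_{\tilde\psi}\bbM^s$ in the second step: the crude estimate only gives $\nabla_{\tilde\psi}\bbM^s(\theta_0)=O(1)$ and hence $\|\psi_0^s-\psi_0\|=O(\sigma_n)$, which does not close the gap. Gaining the extra $\sigma_n$ requires (i) recognising that after conditioning on $(Q^\top\psi,\tilde Q)$ the integrand carries a single jump, located at $t=\tilde Q^\top v$, of height governed by $b(Q)$; (ii) exploiting $2K(0)-1=0$ (centredness of $K=\Phi$) so that the leading piece of the smoothing bias cancels; and (iii) controlling the first-order Taylor remainder of $\bar b(t,\tilde Q)f_\psi(t\mid\tilde Q)$ uniformly over the $\sigma_n$-neighbourhood of $\psi_0$ and over $\gamma$ near $\gamma_0$ — which is precisely where the boundedness/Lipschitz hypotheses in Assumption \ref{eq:assm}(1),(2),(5) and the two-sided bound \ref{eq:assm}(3) are used. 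The coupling between the $\gamma$- and $\psi$-blocks, and the diagonal rescaling by $D_n$ in \eqref{eq:taylor_main}, are then bookkeeping.
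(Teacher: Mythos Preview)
Your argument is correct and reaches the goal, but the route differs from the paper's in a meaningful way.

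\textbf{What the paper does.} The paper first invokes Lemmas \ref{lem:pop_curv_nonsmooth} and \ref{lem:uniform_smooth} (linear curvature of $\bbM$ near $\theta_0$, and the uniform bound $|\bbM^s-\bbM|\le K_1\sigma_n$) to get the preliminary estimate $d^2(\theta_0^s,\theta_0)\lesssim\sigma_n$, hence $\|\psi_0^s-\psi_0\|/\sigma_n=O(1)$. It then runs a \emph{subsequence/compactness} argument: along any subsequence with $(\tilde\psi_0^s-\tilde\psi_0)/\sigma_n\to h$, the $\psi$-block of the first-order condition is decomposed as $T_1+\cdots+T_4$, the terms involving $\beta_0^s-\beta_0$ and $\delta_0^s-\delta_0$ are shown to be $o(1)$, and the surviving term converges to $\bbE[\tilde Q f_0(0\mid\tilde Q)(\delta_0^\top g\delta_0)(2K(\tilde Q^\top h)-1)]$. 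Dotting with $h$ and using $x(2K(x)-1)\ge 0$ forces $h=0$. This gives only $o(\sigma_n)$, not a rate. (The paper then separately shows $(\gamma_0^s-\gamma_0)/\sigma_n$ has an explicit finite limit, which you do not need for the lemma as stated.)

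\textbf{What you do.} You skip the $O(1)$ step and instead extract a \emph{non-asymptotic curvature inequality} for the $\psi$-gradient directly on the neighbourhood: $v^\top\nabla_{\tilde\psi}\bbM^s\ge c\min(\|v\|^2/\sigma_n,\|v\|)-C(\sigma_n+\|\gamma-\gamma_0\|)\|v\|$, then close the loop with the $\gamma$-block via strong convexity and solve the coupled inequalities. This yields the sharper $\|\psi_0^s-\psi_0\|=O(\sigma_n^2)$.

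Both arguments hinge on the same structural computation --- after conditioning on $(Q^\top\psi,\tilde Q)$ and rescaling, the leading piece of $\nabla_{\tilde\psi}\bbM^s$ is $\bbE[\tilde Q\,\bar b(0,\tilde Q)f_\psi(0\mid\tilde Q)(2K(\tilde Q^\top v/\sigma_n)-1)]$, vanishing at $v=0$ by $K(0)=1/2$ --- and on the same non-degeneracy (your weighted second-moment matrix is essentially the paper's $\bbE[\tilde Q\tilde Q^\top f_0(0\mid\tilde Q)(\delta_0^\top g\delta_0)]$). Your approach trades the clean limit identification for a quantitative bound; the paper's approach is slightly less bookkeeping but qualitative.

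One small correction: $a(Q;\gamma)$ is quadratic in $\delta$, not affine in $\gamma$; but you only use local Lipschitz behaviour on the compact neighbourhood, so this slip is harmless.
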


\noindent
Hence the final roadmap is the following: Using Lemma \ref{bandwidth} and Lemma \ref{lem:rate_smooth} we establish that $\|\hat \psi^s - \psi_0\|/\sigma_n = o_p(1)$ if $n\sigma_n \to 0$. This, in turn, enables us to prove Lemma \ref{conv-prob}, i.e. $\sigma_n \nabla^2 \bbM_n^s(\theta^*) \overset{P} \rightarrow Q$,which, along with Lemma \ref{asymp-normality}, establishes the main theorem.

%

%
%
%
\section{Binary response model}
\label{sec:classification_analysis}
Recall our binary response model in equation \eqref{eq:classification_eqn}. To estimate $\psi_0$, we resort to the following  loss (without smoothing): 
\begin{equation}
\label{eq:new_loss}
\bbM(\psi) = \bbE\left((Y - \gamma)\mathds{1}(Q^{\top}\psi \le 0)\right)\end{equation}
with $\gamma \in (\alpha_0, \beta_0)$, which can be viewed as a variant of the square error loss function: 
$$
\bbM(\alpha, \beta, \psi) = \bbE\left(\left(Y - \alpha\mathds{1}(Q^{\top}\psi < 0) - \beta\mathds{1}(Q^{\top}\psi > 0)\right)^2\right)\,.
$$
We establish the connection between these losses in sub-section \ref{loss_func_eq}. It is easy to prove that under fairly mild conditions (discussed later) 
$\psi_0 = \argmin_{\psi \in \Theta}\bbM(\psi)$, uniquely. Under the standard classification paradigm, when we know a priori that 
$\alpha_0 < 1/2 < \beta_0$, we can take $\gamma = 1/2$, and in the absence of this constraint, $\bar{Y}$, which converges to some $\gamma$ between $\alpha_0$ and $\beta_0$, may be substituted in the loss function. In the rest of the paper, we confine ourselves to a known $\gamma$, and for technical simplicity, we take $\gamma = \frac{(\beta_0 + \alpha_0)}{2}$, but this assumption can be removed with more mathematical book-keeping. Thus, $\psi_0$ is estimated by:  
\begin{equation}
\label{non-smooth-score} 
\hat \psi = \argmin_{\psi \in \Theta} \mathbb{M}_n(\psi) = \argmin_{\psi \in \Theta} \frac{1}{n}\sum_{\i=1}^n (Y_i - \gamma)\mathds{1}(Q_i^{\top}\psi \le 0)\,.
\end{equation} We resort to a smooth approximation of the indicator function in 
\eqref{non-smooth-score} using a distribution kernel with suitable bandwidth. The smoothed version of the population score function then becomes: 
\begin{equation}
\label{eq:kernel_smoothed_pop_score}
\bbM^s(\psi) = \bbE\left((Y - \gamma)\left(1-K\left(\frac{Q^{\top}\psi}{\sigma_n}\right)\right)\right)
\end{equation}
where as in the continuous response model, we use $K(x) = \Phi(x)$, and the corresponding empirical version is: 
\begin{equation}
\label{eq:kernel_smoothed_emp_score}
\bbM^s_n(\psi) = \frac{1}{n}\sum_{i=1}^n \left((Y_i - \gamma)\left(1-K\left(\frac{Q_i^{\top}\psi}{\sigma_n}\right)\right)\right)
\end{equation}
Define $\hat{\psi}^s$ and $\psi_0^s$ to be the minimizer of the smoothed version of the empirical (equation \eqref{eq:kernel_smoothed_emp_score}) and population score (equation \eqref{eq:kernel_smoothed_pop_score}) function respectively. Here we only consider the choice of bandwidth $n\sigma_n \to \infty$ and $n\sigma_n^2 \to 0$. Analogous to Theorem \ref{thm:regression} we prove the following result for binary response model: 
\begin{theorem}
\label{thm:binary}
Under Assumptions (\ref{as:distribution} - \ref{as:eigenval_bound}): 
$$
\sqrt{\frac{n}{\sigma_n}}\left(\hat{\psi}_n - \psi_0\right) \Rightarrow N(0, \Gamma) \,,
$$ 
for some non-stochastic matrix $\Gamma$, which will be defined explicitly in the proof. 
\end{theorem}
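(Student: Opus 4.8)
The plan is to follow the same roadmap as for Theorem~\ref{thm:regression}; it is in fact lighter here because the loss \eqref{eq:kernel_smoothed_emp_score} is linear in $Y$ and carries no nuisance regression parameters, so every Taylor expansion is in $\psi$ alone (with derivatives taken in $\tilde\psi$, since the first coordinate of $\psi$ is pinned at $1$). Writing $\hat\psi^s=\hat\psi_n$ for the minimizer of $\bbM_n^s$, the first-order condition $\nabla\bbM_n^s(\hat\psi^s)=0$ together with a one-term Taylor expansion about $\psi_0$ gives, for an intermediate point $\psi^*$,
\[
\sqrt{n/\sigma_n}\,(\hat\psi^s-\psi_0)=-\bigl(\sigma_n\nabla^2\bbM_n^s(\psi^*)\bigr)^{-1}\,\sqrt{n\sigma_n}\,\nabla\bbM_n^s(\psi_0)\,.
\]
Thus it suffices to (i) prove a CLT for $\sqrt{n\sigma_n}\,\nabla\bbM_n^s(\psi_0)$, (ii) identify the probability limit of $\sigma_n\nabla^2\bbM_n^s(\psi^*)$ along any sequence with $\|\psi^*-\psi_0\|/\sigma_n\overset{P}{\to}0$, and (iii) show $\|\hat\psi^s-\psi_0\|=o_P(\sigma_n)$ so that (ii) applies to $\psi^*$.

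For (i): since $\bbE[\,Y-\gamma\mid Q\,]=\tfrac{\beta_0-\alpha_0}{2}\,\s(Q^\top\psi_0)$ and $\nabla\bbM_n^s(\psi_0)=-(n\sigma_n)^{-1}\sum_i(Y_i-\gamma)K'(Q_i^\top\psi_0/\sigma_n)\tilde Q_i$, I would condition on $\tilde Q$, substitute $u=Q^\top\psi_0/\sigma_n$, and Taylor-expand the conditional density of $Q^\top\psi_0$ given $\tilde Q$ about $0$; the odd symmetry of $K'$ kills the leading term, leaving the mean of $\sqrt{n\sigma_n}\,\nabla\bbM_n^s(\psi_0)$ of order $\sqrt{n\sigma_n^3}=o(1)$ (here $n\sigma_n^2\to0$ enters), while its covariance converges to an explicit non-negative-definite matrix $V$ (a constant multiple of $\bbE[f_0(0\mid\tilde Q)\tilde Q\tilde Q^\top]$). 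Since each summand, after the $1/\sqrt{n\sigma_n}$ normalization, has size $O(\sigma_n^{-1/2})$ and $n\sigma_n\to\infty$, the Lindeberg condition for this i.i.d.\ triangular array holds automatically, yielding $\sqrt{n\sigma_n}\,\nabla\bbM_n^s(\psi_0)\Rightarrow\cN(0,V)$ --- this is exactly where the near-optimal bandwidth condition is consumed. For (ii): the same conditioning/change-of-variables computation applied to $\sigma_n\nabla^2\bbM_n^s(\psi)=-n^{-1}\sigma_n^{-1}\sum_i(Y_i-\gamma)K''(Q_i^\top\psi/\sigma_n)\tilde Q_i\tilde Q_i^\top$ gives $\sigma_n\nabla^2\bbM^s(\psi_0)\to Q:=\tfrac{\beta_0-\alpha_0}{2}\sqrt{2/\pi}\,\bbE[f_0(0\mid\tilde Q)\tilde Q\tilde Q^\top]$, which is positive definite under the density and eigenvalue assumptions; a uniform law of large numbers over a ball of radius $o(\sigma_n)$ about $\psi_0$, using Lipschitzness of $f_0$ and boundedness of the relevant conditional moments of $Q$, upgrades this to $\sigma_n\nabla^2\bbM_n^s(\psi^*)\overset{P}{\to}Q$ for every such $\psi^*$.

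Step (iii) is the substantive part and mirrors Lemmas~\ref{lem:rate_smooth}--\ref{bandwidth}. Since $\nabla\bbM^s(\psi_0)=O(\sigma_n)$ and $\nabla^2\bbM^s\asymp\sigma_n^{-1}$ near $\psi_0$, a Taylor argument first yields $\|\psi_0^s-\psi_0\|=O(\sigma_n^2)=o(\sigma_n)$, so it is enough to bound $\hat\psi^s$ relative to $\psi_0^s$. For that I would invoke the standard argmin-rate theorem with a curvature/modulus comparison: $\bbM^s$ separates $\psi$ from $\psi_0^s$ at rate $\|\psi-\psi_0^s\|^2/\sigma_n$ for $\|\psi-\psi_0^s\|\lesssim\sigma_n$ and at rate $\asymp\|\psi-\psi_0^s\|$ for larger displacements (the two-regime metric $d_*$ of Lemma~\ref{lem:rate_smooth}), while the fluctuations of $\bbM_n^s-\bbM^s$ over a Euclidean ball of radius $\rho$ about $\psi_0^s$ are $\lesssim\rho/\sqrt{n\sigma_n}$, the extra $\sqrt{\sigma_n}$ arising because the kernel factor $K'(Q^\top\psi/\sigma_n)$ confines the effective contribution to an $O(\sigma_n)$-slab around the hyperplane. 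Balancing curvature against fluctuation gives $\|\hat\psi^s-\psi_0^s\|=O_P(\sqrt{\sigma_n/n})$ (equivalently a bound of the form $n^{2/3}\sigma_n^{-1/3}d_*^2(\hat\psi^s,\psi_0^s)=O_P(1)$ as in Lemma~\ref{lem:rate_smooth}), and since $n\sigma_n\to\infty$ forces $\|\hat\psi^s-\psi_0^s\|/\sigma_n=O_P((n\sigma_n)^{-1/2})\to0$, we conclude $\|\hat\psi^s-\psi_0\|/\sigma_n\overset{P}{\to}0$. Plugging (i)--(iii) into the displayed expansion and applying Slutsky then gives $\sqrt{n/\sigma_n}\,(\hat\psi_n-\psi_0)\Rightarrow\cN(0,\Gamma)$ with $\Gamma=Q^{-1}VQ^{-1}$ (degenerate in the pinned first coordinate), which is Theorem~\ref{thm:binary}.

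I expect the main obstacle to be step (iii): as in the continuous-response case, the entire point is that this rate argument must survive the weak condition $n\sigma_n\to\infty$ rather than the classical $n\sigma_n^2\to\infty$, which forces one to control the local modulus of continuity of the smoothed empirical score at scale $\sigma_n$ sharply --- i.e.\ to show that the empirical fluctuations over an $O(\sigma_n)$-neighborhood of $\psi_0^s$ are genuinely of order $\sqrt{\sigma_n/n}$ and no larger; careful bracketing/chaining bounds exploiting the slab-concentration of the kernel are what make this work. Everything else is a lighter replica of the continuous-response bookkeeping, with the classification-specific inputs --- namely $\bbE[Y-\gamma\mid Q]=\tfrac{\beta_0-\alpha_0}{2}\,\s(Q^\top\psi_0)$ and the identity connecting the loss \eqref{eq:new_loss} to the squared-error loss established in sub-section~\ref{loss_func_eq} --- replacing the regression algebra.
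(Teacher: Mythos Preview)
Your roadmap is exactly the paper's: the one–step Taylor expansion, the CLT for $\sqrt{n\sigma_n}\,\nabla\bbM_n^s(\psi_0)$, the convergence of $\sigma_n\nabla^2\bbM_n^s(\psi^*)$, and the rate step showing $\|\hat\psi^s-\psi_0\|/\sigma_n\overset{P}{\to}0$ are precisely the binary analogues of Lemmas~\ref{asymp-normality}, \ref{conv-prob}, \ref{lem:rate_smooth}, \ref{bandwidth}.

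Two points of comparison in step~(iii). First, your Taylor argument for $\|\psi_0^s-\psi_0\|=O(\sigma_n^2)$ silently presupposes that $\psi_0^s$ already lies in an $O(\sigma_n)$–ball around $\psi_0$, since the estimate $\nabla^2\bbM^s\asymp\sigma_n^{-1}$ only holds there; the paper supplies that preliminary bound via $|\bbM^s-\bbM|\le K_1\sigma_n$ together with the linear curvature of $\bbM$ (and then proceeds by a subsequence argument rather than Taylor). Second, your modulus bound $\rho/\sqrt{n\sigma_n}$ on a Euclidean $\rho$–ball is \emph{sharper} than what the paper actually uses: the paper bounds the squared envelope by its first power (since $0\le F\le 1$), obtaining $\bbE F^2\lesssim\rho$ and hence modulus $\sqrt{\rho/n}$, which in the $d_*$ metric gives $\phi_n(\zeta)=\sqrt{\zeta}\,\sigma_n^{1/4}$ and the preliminary rate $n^{2/3}\sigma_n^{-1/3}d_*^2=O_P(1)$, i.e.\ $\|\hat\psi^s-\psi_0^s\|=O_P(n^{-1/3}\sigma_n^{2/3})$. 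Your Lipschitz\,+\,slab argument instead yields $\bbE F^2\lesssim\rho^2/\sigma_n$ on the inner regime and hence the faster $\|\hat\psi^s-\psi_0^s\|=O_P(\sqrt{\sigma_n/n})$, which coincides with the final rate of the theorem. Both suffice, but your parenthetical ``equivalently a bound of the form $n^{2/3}\sigma_n^{-1/3}d_*^2=O_P(1)$'' is not correct: that is the paper's coarser rate, not equivalent to yours.
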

We have therefore established that in the regime $n\sigma_n \to \infty$ and $n\sigma_n^2 \to 0$, it is possible to attain asymptotic normality using a smoothed estimator for binary response model.

\section{Inferential methods}
\label{sec:inference}
We draw inferences on $(\beta_0, \delta_0, \psi_0)$ by resorting to similar techniques as in \cite{seo2007smoothed}. For the continuous response model, we need consistent estimators of $V^{\gamma}, Q^{\gamma}, V^{\psi}, Q^{\psi}$ (see Lemma \ref{conv-prob} for the definitions) for hypothesis testing. By virtue of the aforementioned Lemma, we can estimate $Q^{\gamma}$ and $Q^{\psi}$ as follows: 
\begin{align*}
\hat Q^{\gamma} & = \nabla^2_{\gamma} \bbM_n^s(\hat \theta) \,, \\ 
\hat Q^{\psi} & = \sigma_n \nabla^2_{\psi} \bbM_n^s(\hat \theta) \,.
\end{align*}
The consistency of the above estimators is established in the proof of Lemma \ref{conv-prob}. For the other two parameters $V^{\gamma}, V^{\psi}$ we use the following estimators: 
\begin{align*}
\hat V^{\psi} & = \frac{1}{n\sigma_n^2}\sum_{i=1}^n\left(\left(Y_i - X_i^{\top}(\hat \beta + \hat \delta)\right)^2 - \left(Y_i- X_i^{\top}\hat \beta\right)^2\right)^2\tilde Q_i \tilde Q_i^{\top}\left(K'\left(\frac{Q_i^{\top}\hat \psi}{\sigma_n}\right)\right)^2 \\
\hat V^{\gamma} & = \hat \sigma^2_\eps \begin{pmatrix} \frac{1}{n}X_iX_i^{\top} & \frac{1}{n}X_iX_i^{\top}\mathds{1}_{Q_i^{\top}\hat \psi > 0} \\ \frac{1}{n}X_iX_i^{\top}\mathds{1}_{Q_i^{\top}\hat \psi > 0} & \frac{1}{n}X_iX_i^{\top}\mathds{1}_{Q_i^{\top}\hat \psi > 0} \end{pmatrix}
\end{align*}
where $\hat \sigma^2_\eps$ can be obtained as $(1/n)(Y_i - X_i^{\top}\hat \beta - X_i^{\top}\hat \delta \mathds{1}(Q_i^{\top}\hat \psi > 0))^2$, i.e. the residual sum of squares. The explicit value of $V_\gamma$ (as derived in equation \eqref{eq:def_v_gamma} in the proof Lemma \ref{asymp-normality}) is: 
$$
V^{\gamma} = \sigma_\eps^2 \begin{pmatrix}\bbE\left[XX^{\top}\right] & \bbE\left[XX^{\top}\mathds{1}_{Q^{\top}\psi_0 > 0}\right] \\
\bbE\left[XX^{\top}\mathds{1}_{Q^{\top}\psi_0 > 0}\right] & \bbE\left[XX^{\top}\mathds{1}_{Q^{\top}\psi_0 > 0}\right] \end{pmatrix} 
$$ 
Therefore, the consistency of $\hat V_\gamma$ is immediate from the law of large numbers. The consistency of $\hat V^{\psi}$ follows via arguments similar to those employed in proving Lemma \ref{conv-prob} but under somewhat more stringent moment conditions: in particular, we need $\bbE[\|X\|^8] < \infty$ and $\bbE[(X^{\top}\delta_0)^k \mid Q]$ to be Lipschitz functions over $Q$ for $1 \le k \le 8$. The inferential techniques for the classification model are similar and hence skipped, to avoid repetition.

\section{Simulation studies}
\label{sec:simulation}
In this section, we present some simulation results to analyse the effect of the choice of $\sigma_n$ on the finite sample approximation of asymptotic normality, i.e. Berry-Essen type bounds. If we choose a smaller sigma, the rate of convergence is accelerated but the normal approximation error at smaller sample sizes will be higher, as we don't have enough observations in the vicinity of the change hyperplane for the CLT to kick in. This problem is alleviated by choosing $\sigma_n$ larger, but this, on the other hand, compromises the convergence rate. Ideally, a Berry-Essen type of bound will quantify this, but this will require a different set of techniques and is left as an open problem. In our simulations, we generate data from following setup: 
\begin{enumerate}
\item Set $N = 50000, p = 3, \alpha_0 = 0.25, \beta = 0.75$ and some $\theta_0 \in \bbR^p$ with first co-ordinate $ = 1$. 
\item Generate $X_1, \dots, X_n \sim \cN(0, I_p)$. 
\item Generate $Y_i \sim \textbf{Bernoulli}\left(\alpha_0\mathds{1}_{X_i^{\top}\theta_0 \le 0} + \beta_0 \mathds{1}_{X_i^{\top}\theta_0 > 0}\right)$. 
\item Estimate $\hat \theta$ by minimizing $\bbM_n(\theta)$ (replacing $\gamma$ by $\bar Y$) based on $\{(X_i, Y_i)\}_{i=1}^n$ for different choices of $\sigma_n$. 
\end{enumerate}
We repeat Step 2 - Step 4 a hundred times to obtain $\hat \theta_1, \dots, \hat \theta_{100}$. Define $s_n$ to be the standard deviation of $\{\hat \theta_i\}_{i=1}^{100}$. Figures ref{fig:co2} and \ref{fig:co3} show the qqplots of $\tilde \theta_i = (\hat \theta_i - \theta_0)/s_n$ against the standard normal for four different choices of $\sigma_n = n^{-0.6}, n^{-0.7}, n^{-0.8}, n^{-0.9}$. 
\begin{figure}
\centering 
\includegraphics[scale=0.4]{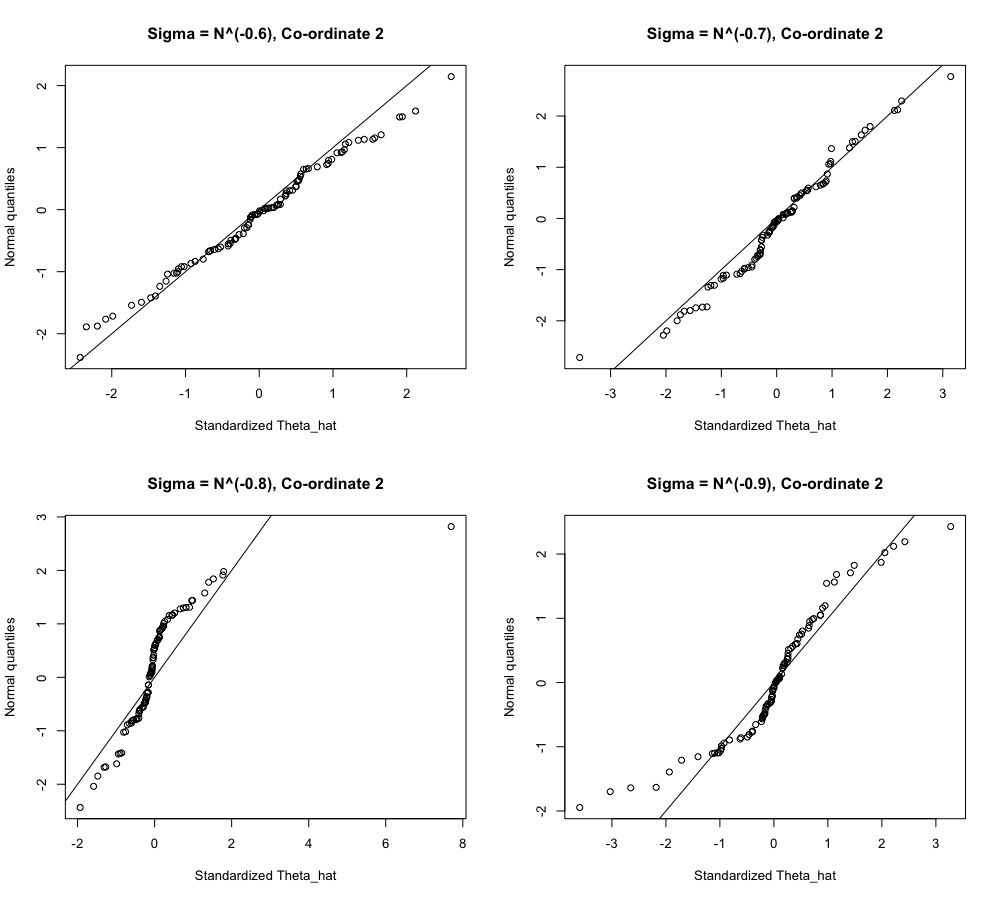}
\caption{In this figure, we present qqplot for estimating second co-ordinate of $\theta_0$ with different choices of $\sigma_n$ mentioned at the top of each plots.}
\label{fig:co2}
\end{figure}
\begin{figure}
\centering 
\includegraphics[scale=0.4]{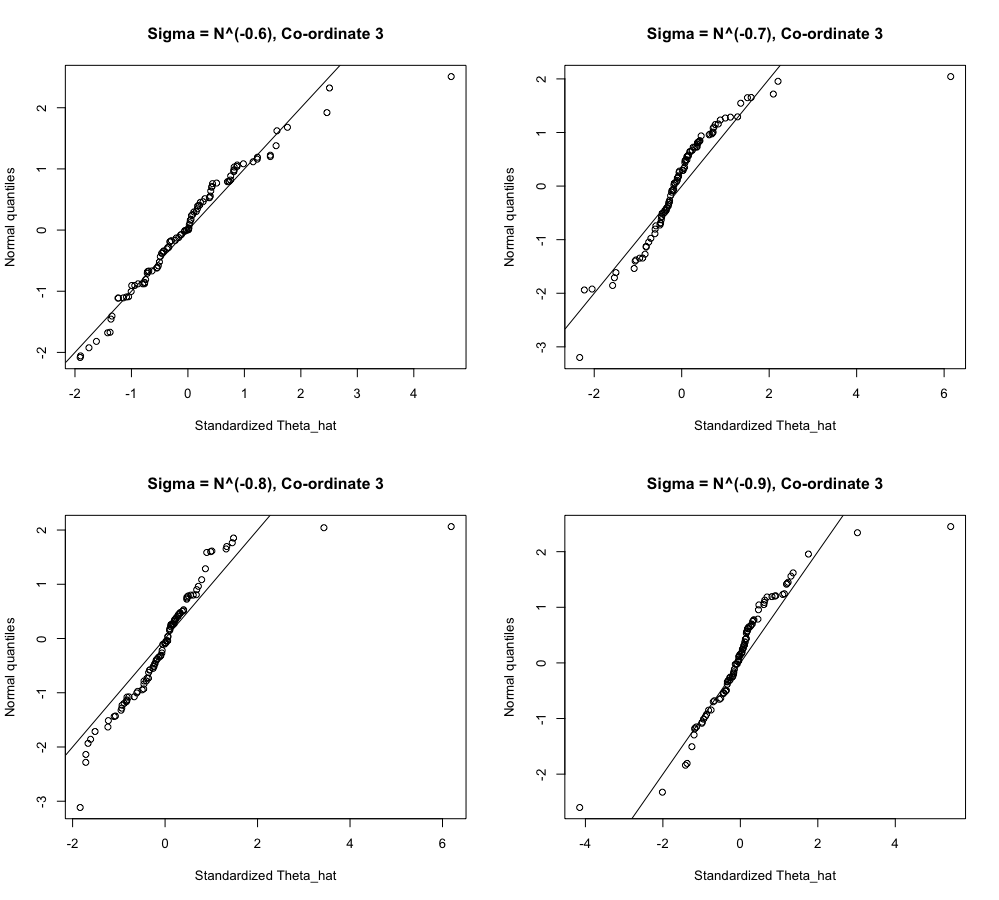}
\caption{In this figure, we present qqplot for estimating third co-ordinate of $\theta_0$ with different choices of $\sigma_n$ mentioned at the top of each plots.}
\label{fig:co3}
\end{figure}
It is evident that smaller value of $\sigma_n$ yield a poor normal approximation. Although our theory shows that asymptotic normality holds as long as $n\sigma_n \to \infty$, in practice we recommend choosing $\sigma_n$ such that $n\sigma_n \ge 30$ for the central limit of theorem to take effect.

\section{Real data analysis}
\label{sec:real_data}
We illustrate our method using cross-country data on pollution (carbon-dioxide), income and urbanization obtained from the World Development Indicators (WDI), World Bank. The Environmental Kuznets Curve hypothesis (EKC henceforth), a popular and ongoing area of research in environmental economics, posits that at an initial stage of economic development pollution increases with economic growth, and then diminishes when society’s priorities change, leading to an inverted U-shaped relation between income (measured via real GDP per capita) and pollution. The hypothesis has led to numerous empirical papers (i) testing the hypothesis (whether the relation is inverted U-shaped for countries/regions of interest in the sample), (ii) exploring the threshold level of income at which pollution starts falling, as well as (iii) examining the countries/regions which belong to the upward rising part versus the downward sloping part of the inverted U-shape, if at all. The studies have been performed using US state level data or cross-country data (e.g. \cite{shafik1992economic}, \cite{millimet2003environmental}, \cite{aldy2005environmental}, \cite{lee2019nonparametric},\cite{boubellouta2021cross}, \cite{list1999environmental}, \cite{grossman1995economic}, \cite{bertinelli2005environmental}, \cite{azomahou2006economic}, \cite{taskin2000searching} to name a few). While some of these papers have found evidence in favor of the EKC hypothesis (inverted U-shaped income-pollution relation), others have found evidence against it (monotonically increasing or other shapes for the relation). The results often depend on countries/regions in the sample, period of analysis, as well as the pollutant studied.
\\\\
\noindent
While income-pollution remains the focal point of most EKC studies, several of them have also included urban agglomeration (UA) or some other measures of urbanization as an important control variable especially while investigating carbon emissions.\footnote {Although income growth is connected to urbanization, countries are heterogenous and follow different growth paths due to their varying geographical structures, population densities, infrastructures, ownerships of resources making a case for using urbanization as another control covariate in the income-pollution study. The income growth paths of oil rich UAE, manufacturing based China, serviced based Singapore, low population density Canada (with vast land) are all different.} (see for example, \cite{shafik1992economic}, \cite{boubellouta2021cross}and \cite{liang2019urbanization}). The theory of ecological economics posits potentially varying effects of increased urbanization on pollution– (i) urbanization leading to more pollution (due to its close links with sanitations, dense transportations, and proximities to polluting manufacturing industries), (ii) urbanization potentially leading to less pollution based on ‘compact city theory’ (see \cite{burton2000compact}, \cite{capello2000beyond}, \cite{sadorsky2014effect}) that explains the potential benefits of increased urbanization in terms of economies of scale (for example, replacing dependence on automobiles with large scale subway systems, using multi-storied buildings instead of single unit houses, keeping more open green space). \cite{liddle2010age}, using 17 developed countries, find a positive and significant effect of urbanization on pollution. On the contrary, using a set of 69 countries \cite{sharma2011determinants} find a negative and significant effect of urbanization on pollution while \cite{du2012economic} find an insignificant effect of urbanization on carbon emission. Using various empirical strategies \cite{sadorsky2014effect} conclude that the positive and negative effects of urbanization on carbon pollution may cancel out depending on the countries involved often leaving insignificant effects on pollution. They also note that many countries are yet to achieve a sizeable level of urbanization which presumably explains why many empirical works using less developed countries find insignificant effect of urbanization.  In summary, based on the existing literature, both the relationship between urbanization and pollution as well as the relationship between income and pollution appear to depend largely on the set of countries considered in the sample. This motivates us to use UA along with income in our change plane model for analyzing carbon-dioxide emission to plausibly separate the countries into two regimes. 
\\\\
\noindent
Following the broad literature we use pollution emission per capita (carbon-dioxide measured in metric tons per capita) as the dependent variable and real GDP per capita (measured in 2010 US dollars), its square (as is done commonly in the EKC literature) and a popular measure of urbanization, namely urban agglomeration  (UA)\footnote{The exact definition can be found in the World Development Indicators database from the World Bank website.} as covariates (in our notation $X$) in our regression. In light of the preceding discussions we fit a change plane model comprising real GDP per capita and UA (in our notation $Q$). To summarize the setup, we use the continuous response model as described in equation \eqref{eq:regression_main_eqn}, i.e 
\begin{align*}
Y_i & = X_i^{\top}\beta_0 + X_i^{\top}\delta_0\mathds{1}_{Q_i^{\top}\psi_0 > 0} + \eps_i \\
& = X_i^{\top}\beta_0\mathds{1}_{Q_i^{\top}\psi_0 \le 0} + X_i^{\top}(\beta_0 + \delta_0)\mathds{1}_{Q_i^{\top}\psi_0 > 0} + \eps_i
\end{align*}
with the per capita $CO_2$ emission in metric ton as $Y$, per capita GDP, square of per capita GDP and UA as $X$ (hence $X \in \bbR^3$) and finally, per capita GDP and UA as $Q$ (hence $Q \in \bbR^2$). Observe that $\beta_0$ represents the regression coefficients corresponding to the countries with $Q_i^{\top}\psi_0 \le 0$ (henceforth denoted by Group 1) and $(\beta_0+ \delta_0)$ represents the regression coefficients corresponding to the countries with $Q_i^{\top}\psi_0 \ge 0$ (henceforth denoted by Group 2). As per our convention, in the interests of identifiability we assume $\psi_{0, 1} = 1$, where $\psi_{0,1}$ is the change plane parameter corresponding to per capita GDP. Therefore the only change plane coefficient to be estimated is $\psi_{0, 2}$, the change plane coefficient for UA. For numerical stability, we divide per capita GDP by $10^{-4}$ (consequently square of per capital GDP is scaled by $10^{-8}$)\footnote{This scaling helps in the numerical stability of the gradient descent algorithm used to optimize the least squares criterion.}.  After some pre-processing (i.e. removing rows consisting of NA and countries with $100\%$ UA) we estimate the coefficients $(\beta_0, \delta_0, \psi_0)$ of our model based on data from 115 countries with $\sigma_n = 0.05$ and test the significance of the various coefficients using the methodologies described in Section \ref{sec:inference}. We present our findings in Table \ref{tab:ekc_coeff}.  
\begin{table}[!h]
    \centering
    \begin{tabular}{|c||c||c|}
    \hline
    Coefficients & Estimated values & p-values \\
    \hline \hline 
      $\beta_{0, 1}$ (\text{RGDPPC for Group 1})  & 6.98555060 & 4.961452e-10   \\
       $\beta_{0, 2}$ (\text{squared RGDPPC for Group 1})  &  -0.43425991 & 7.136484e-02 \\
       $\beta_{0, 3}$ (\text{UA for Group 1}) & -0.02613813  & 1.066065e-01
\\
       $\beta_{0, 1} + \delta_{0, 1}$ (\text{RGDPPC for Group 2}) & 2.0563337 & 0.000000e+00\\
      $\beta_{0, 2} + \delta_{0, 2}$ (\text{squared RGDPPC for Group 2}) &  -0.1866490 & 4.912843e-04   \\
       $\beta_{0, 3} + \delta_{0, 3}$ (\text{UA for Group 2}) & 0.1403171& 1.329788e-05 \\
       $\psi_{0,2 }$  (\text{Change plane coeff for UA}) & -0.07061785 & 0.000000e+00\\
         \hline
    \end{tabular}
    \caption{Table of the estimated regression and change plane coefficients along with their p-values.}
    \label{tab:ekc_coeff}
\end{table}
\\\\
\noindent
From the above analysis, we find that GDP has significantly positive effect on pollution for both groups of countries. The effect of its squared term is negative for both groups; but the effect is significant for Group-2 consisting of mostly high income countries whereas its effect is insignificant (at the 5\% level) for the Group-1 countries (consisting of mostly low or middle income and few high income countries). Thus, not surprisingly, we find evidence in favor of EKC for the developed countries, but not for the mixed group.  Notably, Group-1 consists of a mixed set of countries like Angola, Sudan, Senegal, India, China, Israel, UAE etc., whereas Group-1 consists of rich and developed countries like Canada, USA, UK, France, Germany etc. The urban variable, on the other hand, is seen to have insignificant effect on Group-1 which is in keeping with \cite{du2012economic}, \cite{sadorsky2014effect}. Many of them are yet to achieve substantial urbanization and this is more true for our sample period \footnote{We use 6 years average from 2010-2015 for GDP and pollution measures. Such averaging is in accordance with the cross-sectional empirical literature using cross-country/regional data and helps avoid business cycle fluctuations in GDP. It also minimizes the impacts of outlier events such as the financial crisis or great recession period.  The years that we have chosen are ones for which we could find data for the largest number of countries.}. In contrast, UA has a positive and significant effect on Group-2 (developed) countries which is consistent with the findings of \cite{liddle2010age}, for example. Note that UA plays a crucial role in dividing the countries into different regimes, as the estimated value of $\psi_{0,2}$ is significant. Thus, we are able to partition countries into two regimes: a mostly rich and a mixed group. 
\\\\
\noindent
Note that many underdeveloped countries and poorer regions of emerging countries are still swamped with greenhouse gas emissions from burning coal, cow dung etc., and usage of poor exhaust systems in houses and for transport. This is more true for rural and semi-urban areas of developing countries. So even while being less urbanized compared to developed nations, their overall pollution load is high (due to inefficient energy usage and higher dependence on fossil fuels as pointed out above) and rising with income and they are yet to reach the descending part of the inverted U-shape for the income-pollution relation. On the contrary, for countries in Group-2, the adoption of more efficient energy and exhaust systems are common in households and transportations in general, leading to eventually decreasing pollution with increasing income (supporting EKC). Both the results are in line with the existing EKC literature. Additionally we find that the countries in Group 2 are yet to achieve ‘compact city’ and green urbanization. This is a stylized fact that is confirmed by the positive and significant effect of UA on pollution in our analysis.  
\\\\
\noindent
There are many future potential applications of our method in economics. Similar analyses can be performed for other pollutants (such as sulfur emission, electrical waste/e-waste, nitrogen pollution etc.). While income/GDP remains a common, indeed the most crucial variable in pollution studies, other covariates (including change plane defining variables) may vary, depending on the pollutant of interest. Another potential application can be that of identifying the determinants of family health expenses in household survey data. Families are often asked about their health expenses incurred in the past one year. An interesting case in point may be household surveys collected in India where one finds numerous (large) joint families with several children and old people residing in the same household and most families are uninsured. It is often seen that health expenditure increases with income with a major factor being the costs associated with regularly performed preventative medical examinations which are affordable only once a certain income level is reached. The important covariates here are per capita family income, family wealth, `dependency ratio' (number of children and old to the total number of people in the family) and the binary indicator of any history of major illness/hospitalizations in the family in the past year. Family income per capita and history of major illness are natural candidate covariates for defining the change plane. 

\section{Conclusion}
\label{sec:conclusion}
In this paper we have established that under some mild assumptions the kernel-smoothed change plane estimator is asymptotically normal with near optimal rate $n^{-1}$. To the best of our knowledge, the state of the art result in this genre of problems is due to \cite{seo2007smoothed}, where they demonstrate a best possible rate about $n^{-3/4}$ for i.i.d. data. The main difference between their approach and ours is mainly the proof of Lemma \ref{bandwidth}. Our techniques are based upon modern empirical process theory which allow us to consider much smaller bandwidths $\sigma_n$ compared to those in \cite{seo2007smoothed}, who appear to require larger values to achieve the result, possibly owing to their reliance on the techniques developed in \cite{horowitz1992smoothed}. Although we have established it is possible to have asymptotic normality with really small bandwidths, we believe that the finite sample approximation (e.g. Berry-Essen bound) to normality could be poor, which is also evident from our simulation. 


\appendix


\section{Appendix}
In this section, we present the proof of Lemma \ref{lem:rate_smooth}, which lies at the heart of our refined analysis of the smoothed change plane estimator. Proofs of the other lemmas and our results for the binary response model are available in the Appendix \ref{sec:supp_B}.  
\subsection{Proof of Lemma \ref{lem:rate_smooth}}

\begin{proof}
The proof of Lemma \ref{lem:rate_smooth} is quite long, hence we further break it into few more lemmas. 
\begin{lemma}
\label{lem:pop_curv_nonsmooth}
Under Assumption \eqref{eq:assm}, there exists $u_- , u_+ > 0$ such that: 
$$
u_- d^2(\theta, \theta_0) \le \bbM(\theta) - \bbM(\theta_0) \le u_+ d^2(\theta, \theta_0) \,,
$$
for $\th$ in a (non-srinking) neighborhood of $\th_0$, where: 
$$
d(\theta, \theta_0) := \sqrt{\|\beta - \beta_0\|^2 + \|\delta - \delta_0\|^2 + \|\psi - \psi_0\|} \,.
$$
\end{lemma}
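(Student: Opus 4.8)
The plan is to convert the population excess risk into an explicit quadratic form in $X$ and then bound that form region by region in $Q$-space. Since $\mathds{1}_{Q^{\top}\psi>0}^2=\mathds{1}_{Q^{\top}\psi>0}$, the criterion can be written as $\bbM(\th)=\bbE\big[(Y-X^{\top}\beta-X^{\top}\delta\mathds{1}_{Q^{\top}\psi>0})^2\big]$; substituting the true model $Y=X^{\top}\beta_0+X^{\top}\delta_0\mathds{1}_{Q^{\top}\psi_0>0}+\eps$ and using $\eps\indep(X,Q)$ with $\bbE[\eps]=0$ gives $\bbM(\th)=\sigma_\eps^2+\bbE\big[(X^{\top}c(Q))^2\big]$, where
$$
c(Q)=(\beta_0-\beta)+(\delta_0-\delta)\mathds{1}_{Q^{\top}\psi_0>0}+\delta\big(\mathds{1}_{Q^{\top}\psi_0>0}-\mathds{1}_{Q^{\top}\psi>0}\big).
$$
In particular $\bbM(\th_0)=\sigma_\eps^2$, so $\bbM(\th)-\bbM(\th_0)=\bbE\big[(X^{\top}c(Q))^2\big]$, and it suffices to sandwich the right-hand side between constant multiples of $d^2(\th,\th_0)$.

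Conditioning on $Q$, linearity gives $\bbE\big[(X^{\top}c(Q))^2\mid Q\big]=c(Q)^{\top}\bbE[XX^{\top}\mid Q]\,c(Q)$, and since $\bbE[XX^{\top}\mid Q]=g(Q)+\bbE[X\mid Q]\bbE[X\mid Q]^{\top}$, parts (2) and (5) of Assumption~\ref{eq:assm} yield $c_-I\preceq g(Q)\preceq\bbE[XX^{\top}\mid Q]\preceq\big(c_++\sup_q m_2(q)\big)I$. Hence the excess risk is, up to the fixed constants $c_-$ and $c_++\sup_q m_2(q)$, the same as $\bbE\|c(Q)\|^2$, and it remains to bound the latter two-sidedly by $d^2(\th,\th_0)$. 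Write $a=\beta_0-\beta$, $b=\delta_0-\delta$, and let $A=\{\s(Q^{\top}\psi)\neq\s(Q^{\top}\psi_0)\}$ be the wedge between the two hyperplanes, so that $\mathds{1}_{Q^{\top}\psi_0>0}-\mathds{1}_{Q^{\top}\psi>0}=\pm\mathds{1}_A$ up to a $\bbP$-null set and, by part (3) of Assumption~\ref{eq:assm}, $p_-\|\psi-\psi_0\|\le\bbP(A)\le p_+\|\psi-\psi_0\|$. The upper bound is then immediate from $\|c(Q)\|^2\le 3\|a\|^2+3\|b\|^2\mathds{1}_{Q^{\top}\psi_0>0}+3\|\delta\|^2\mathds{1}_A$ and boundedness of $\|\delta\|$ on the neighborhood: $\bbE\|c(Q)\|^2\le C\big(\|a\|^2+\|b\|^2+\|\psi-\psi_0\|\big)$.

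For the lower bound I would split $\bbE\|c(Q)\|^2$ over $A^c$ and $A$. On $A^c$ we have $\mathds{1}_{Q^{\top}\psi_0>0}=\mathds{1}_{Q^{\top}\psi>0}$, hence $c(Q)=a$ where $Q^{\top}\psi_0\le0$ and $c(Q)=a+b$ where $Q^{\top}\psi_0>0$; by part (4) of Assumption~\ref{eq:assm} both half-spaces carry probability bounded away from $0$ while $\bbP(A)$ is small, so $\bbE[\|c(Q)\|^2\mathds{1}_{A^c}]\gtrsim\|a\|^2+\|a+b\|^2\gtrsim\|a\|^2+\|b\|^2$. On $A$ we have $c(Q)=\pm\delta+\big(a+b\mathds{1}_{Q^{\top}\psi_0>0}\big)$, so $\|c(Q)\|^2\mathds{1}_A\ge\tfrac12\|\delta\|^2\mathds{1}_A-\|a+b\mathds{1}_{Q^{\top}\psi_0>0}\|^2\mathds{1}_A$; taking expectations, using $\|\delta\|\ge\|\delta_0\|/2>0$ on the neighborhood (here we use $\delta_0\neq0$, a standing identification requirement) together with $\bbP(A)\ge p_-\|\psi-\psi_0\|$, gives $\bbE[\|c(Q)\|^2\mathds{1}_A]\ge c_1\|\psi-\psi_0\|-c_2\bbP(A)\big(\|a\|^2+\|b\|^2\big)$. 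Choosing the radius of the neighborhood in the $\psi$-direction small enough that $c_2\bbP(A)$ is dominated by the constant produced on $A^c$ absorbs the $\|a\|^2+\|b\|^2$ contamination, and we obtain $\bbE\|c(Q)\|^2\gtrsim\|a\|^2+\|b\|^2+\|\psi-\psi_0\|=d^2(\th,\th_0)$, which combined with the two-sided bound above finishes the proof.

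The one genuinely delicate point is the lower bound: a priori the three summands of $c(Q)$ can cancel, so bounding each piece in isolation is impossible. The wedge decomposition is what circumvents this --- the $\delta$-direction contributes only on $A$, whereas the $(a,b)$-directions dominate on $A^c$ --- and the smallness of $\bbP(A)\asymp\|\psi-\psi_0\|$ ensures that the cross-contamination between the two regimes is of strictly higher order, hence removable by a sufficiently small (but non-shrinking) choice of neighborhood. Everything else is elementary estimation under the five conditions of Assumption~\ref{eq:assm}.
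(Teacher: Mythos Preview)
Your proof is correct and follows the same overall strategy as the paper: both reduce $\bbM(\th)-\bbM(\th_0)$ to $\bbE\big[(X^\top c(Q))^2\big]$, sandwich this via the eigenvalue bounds on $g(Q)$, and then exploit Assumption~\ref{eq:assm}(3) to control the wedge probability $\bbP(A)\asymp\|\psi-\psi_0\|$.

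The organizational difference is worth noting. The paper expands $\bbE\|c(Q)\|^2$ algebraically into six terms (equation~\eqref{eq:nsb1}), handles the $(\beta,\delta)$ block with the elementary inequality $2ab\ge -(a^2/c+cb^2)$ for a well-chosen $c\in(p_{\psi_0},1)$, and then regroups the remaining cross terms by hand (equation~\eqref{eq:nsb3}). Your $A/A^c$ decomposition is geometrically cleaner: on $A^c$ the $\delta$-perturbation vanishes and you recover $\|a\|^2+\|a+b\|^2\gtrsim\|a\|^2+\|b\|^2$ directly; on $A$ the $\pm\delta$ term dominates and contributes $\gtrsim\|\delta_0\|^2\bbP(A)$, with the cross-contamination absorbed because $\bbP(A)$ is small. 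This avoids the somewhat ad hoc choice of $c$ and makes the role of the neighborhood radius more transparent. You are also slightly more careful on the upper bound, correctly using $c_++\sup_q m_2(q)$ for $\lambda_{\max}\big(\bbE[XX^\top\mid Q]\big)$ rather than $c_+$ alone (the paper's $c_+$ bound tacitly assumes the second-moment matrix, not just $g(Q)$, is uniformly bounded). Both routes rely on $\delta_0\neq 0$, which you make explicit and the paper leaves implicit in the step $\gtrsim\|\delta_0\|^2\bbP(A)$.
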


\begin{lemma}
\label{lem:uniform_smooth}
Under Assumption \ref{eq:assm} the smoothed loss function $\bbM^s(\theta)$ is uniformly close to the non-smoothed loss function $\bbM(\th)$: 
$$
\sup_{\th \in \Theta}\left|\bbM^s(\theta) - \bbM(\theta)\right| \le K_1 \sigma_n \,,
$$ 
for some constant $K_1$. 
\end{lemma}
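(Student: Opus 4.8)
The plan is to bound the pointwise gap $\bbM^s(\theta)-\bbM(\theta)$ and observe that the bound does not depend on $\theta$. Subtracting the two displayed expressions for $\bbM^s$ and $\bbM$, the term $\bbE(Y-X^\top\beta)^2$ cancels, leaving
\begin{equation*}
\bbM^s(\theta)-\bbM(\theta)=\bbE\!\left[h_\theta(X,Y)\left(K\!\left(\tfrac{Q^\top\psi}{\sigma_n}\right)-\mathds{1}_{Q^\top\psi>0}\right)\right],\qquad h_\theta(X,Y):=(X^\top\delta)^2-2(Y-X^\top\beta)(X^\top\delta).
\end{equation*}
First I would condition on $Q$ and show $\sup_{\theta\in\Theta}|\bbE[h_\theta(X,Y)\mid Q]|\le C$ almost surely for a finite constant $C$. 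Substituting $Y=X^\top\beta_0+X^\top\delta_0\mathds{1}_{Q^\top\psi_0>0}+\eps$ and using that $\eps$ is independent of $(X,Q)$ with mean zero, the term containing $\eps$ vanishes, and $\bbE[h_\theta\mid Q]$ reduces to a linear combination of quantities of the form $\bbE[(X^\top a)(X^\top b)\mid Q]$ with $a,b$ ranging over the (bounded) parameter space; Cauchy--Schwarz bounds each of these by $\|a\|\,\|b\|\,m_2(Q)$, and since $m_2$ is bounded by Assumption \ref{eq:assm} and $\Theta$ is compact, the uniform constant $C$ follows.

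The second ingredient is the kernel approximation error. Since $K=\Phi$, a direct check gives $|\Phi(x)-\mathds{1}_{x>0}|=\bar\Phi(|x|)$ for every $x$, where $\bar\Phi:=1-\Phi$. Combining this with the previous step and the tower property,
\begin{equation*}
\left|\bbM^s(\theta)-\bbM(\theta)\right|\le C\,\bbE\!\left[\bar\Phi\!\left(\tfrac{|Q^\top\psi|}{\sigma_n}\right)\right].
\end{equation*}
I would then condition on $\tilde Q$: because the first coordinate of $\psi$ is normalized to $1$, we have $Q^\top\psi=Q_1+\tilde Q^\top\tilde\psi$, so the conditional density of $Q^\top\psi$ given $\tilde Q$ is merely a translate of the conditional density of $Q_1$ given $\tilde Q$ and hence has supremum at most $F_+$ for \emph{every} $\psi$ in the parameter space (Assumption \ref{eq:assm}). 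Therefore
\begin{equation*}
\bbE\!\left[\bar\Phi\!\left(\tfrac{|Q^\top\psi|}{\sigma_n}\right)\mid\tilde Q\right]\le F_+\int_{-\infty}^{\infty}\bar\Phi\!\left(\tfrac{|t|}{\sigma_n}\right)dt=2F_+\sigma_n\int_0^\infty\bar\Phi(u)\,du=\sqrt{\tfrac{2}{\pi}}\,F_+\,\sigma_n,
\end{equation*}
using the substitution $u=t/\sigma_n$ together with $\int_0^\infty\bar\Phi(u)\,du=\bbE[\max(Z,0)]=1/\sqrt{2\pi}$ for $Z\sim\cN(0,1)$. Taking expectation over $\tilde Q$ and setting $K_1=\sqrt{2/\pi}\,CF_+$ yields the lemma, uniformly in $\theta$ since neither $C$ nor $F_+$ depends on $\theta$.

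This argument uses no empirical process theory — both $\bbM^s$ and $\bbM$ are population objects — so I do not anticipate a real obstacle. The two points that need a little care are: (i) making the bound on $\bbE[h_\theta\mid Q]$ hold \emph{uniformly} over all of $\Theta$, for which compactness of $\Theta$ and the bounded-conditional-second-moment part of Assumption \ref{eq:assm} are exactly what is needed, with the $\eps$-term conveniently disappearing after conditioning so that no moment assumption on $\eps$ is required here; and (ii) verifying that the density bound in Assumption \ref{eq:assm}, although phrased for $\psi$ near $\psi_0$, in fact holds for every $\psi$ in the parameter space, because fixing $\psi_1=1$ turns the conditional law of $Q^\top\psi$ given $\tilde Q$ into a pure translation of that of $Q_1$ given $\tilde Q$.
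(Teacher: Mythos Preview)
Your proof is correct and follows essentially the same route as the paper: write $\bbM^s(\theta)-\bbM(\theta)$ as the expectation of $h_\theta(X,Y)\bigl(K(Q^\top\psi/\sigma_n)-\mathds{1}_{Q^\top\psi>0}\bigr)$, bound the $h_\theta$-factor uniformly after conditioning on $Q$ (using compactness of $\Theta$ and the bounded conditional second moment), and then bound $\bbE\bigl|K(Q^\top\psi/\sigma_n)-\mathds{1}_{Q^\top\psi>0}\bigr|$ by $\sigma_n$ times a finite integral via the density bound $F_+$. The only cosmetic differences are that the paper bounds $\bbE[|h_\theta|\mid Q]$ rather than $|\bbE[h_\theta\mid Q]|$, and leaves the integral $\int|K(t)-\mathds{1}_{t>0}|\,dt$ unevaluated, whereas you compute it explicitly using $K=\Phi$; your observation in point (ii) that the density bound extends to all $\psi$ by the translation argument is also implicit in the paper's calculation.
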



\begin{lemma}
\label{lem:pop_smooth_curvarture}
Under certain assumptions: 
\begin{align*}
\bbM^s(\th) - \bbM^s(\th_0^s) & \gtrsim \|\beta - \beta_0^s\|^2 + \|\delta - \delta_0^s\|^2 \\
& \qquad \qquad + \frac{\|\psi - \psi_0^s\|^2}{\sigma_n} \mathds{1}_{\|\psi - \psi_0^s\| \le \cK\sigma_n} + \|\psi - \psi_0^s\| \mathds{1}_{\|\psi - \psi_0^s\| > \cK\sigma_n} \\\
& := d_*^2(\th, \th_0^s) \,.
\end{align*}
for some constant $\cK$ and for all $\th$ in a neighborhood of $\th_0$, which does not change with $n$. 
\end{lemma}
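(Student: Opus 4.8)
The plan is to prove the two–sided\,—\,actually only the lower\,—\,bound separately on two complementary regions, \emph{near} $\psi_0^s$ (where $\|\psi-\psi_0^s\|\le\cK\sigma_n$) and \emph{far} from it, and to glue them by choosing the constant $\cK$ large. I first record a \emph{preliminary estimate}: $d^2(\theta_0^s,\theta_0)=O(\sigma_n)$. Indeed, since $\theta_0$ is a well--separated minimizer of $\bbM$ and $\bbM^s\to\bbM$ uniformly (Lemma~\ref{lem:uniform_smooth}), a standard M--estimation argument gives $\theta_0^s\to\theta_0$, so Lemma~\ref{lem:pop_curv_nonsmooth} applies at $\theta_0^s$; combined with $\bbM(\theta_0^s)-\bbM(\theta_0)\le \bbM^s(\theta_0^s)-\bbM^s(\theta_0)+2K_1\sigma_n\le 2K_1\sigma_n$ this yields $d^2(\theta_0^s,\theta_0)\le (2K_1/u_-)\sigma_n$. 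In particular $\|\psi_0^s-\psi_0\|=O(\sigma_n)$ and $\|\beta_0^s-\beta_0\|^2+\|\delta_0^s-\delta_0\|^2=O(\sigma_n)$; these are the only facts about $\theta_0^s$ I shall use.

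\emph{Far regime} $\|\psi-\psi_0^s\|>\cK\sigma_n$. Here only the global approximation matters: by Lemmas~\ref{lem:uniform_smooth} and~\ref{lem:pop_curv_nonsmooth},
\[
\bbM^s(\theta)-\bbM^s(\theta_0^s)\ \ge\ \bbM(\theta)-\bbM(\theta_0)-2K_1\sigma_n\ \ge\ u_-\,d^2(\theta,\theta_0)-2K_1\sigma_n\,,
\]
and the preliminary estimate lets me replace $d^2(\theta,\theta_0)$ by $\tfrac12\big(\|\beta-\beta_0^s\|^2+\|\delta-\delta_0^s\|^2\big)+\|\psi-\psi_0^s\|-C\sigma_n$ for a fixed $C$. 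Since $\|\psi-\psi_0^s\|>\cK\sigma_n$ on this region, choosing $\cK$ large (depending only on $u_-,K_1,C$) makes the positive multiple of $\|\psi-\psi_0^s\|$ absorb every $O(\sigma_n)$ term, giving $\bbM^s(\theta)-\bbM^s(\theta_0^s)\gtrsim \|\beta-\beta_0^s\|^2+\|\delta-\delta_0^s\|^2+\|\psi-\psi_0^s\|=d_*^2(\theta,\theta_0^s)$.

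\emph{Near regime} $\|\psi-\psi_0^s\|\le\cK\sigma_n$. Because $\nabla\bbM^s(\theta_0^s)=0$, a second--order Taylor expansion gives $\bbM^s(\theta)-\bbM^s(\theta_0^s)=\tfrac12 v^\top\big(D_n\nabla^2\bbM^s(\bar\theta)D_n\big)v$ with $v=(\beta-\beta_0^s,\ \delta-\delta_0^s,\ (\psi-\psi_0^s)/\sqrt{\sigma_n})$, $D_n$ the diagonal scaling introduced above, and $\bar\theta$ on the segment from $\theta_0^s$ to $\theta$, so $\|\bar\psi-\psi_0^s\|\le\cK\sigma_n$ and $(\bar\beta,\bar\delta)$ stays in the small fixed neighborhood. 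As $\|v\|^2=d_*^2(\theta,\theta_0^s)$ on this region, it suffices to show $D_n\nabla^2\bbM^s(\bar\theta)D_n\succeq cI$ for some $c>0$, uniformly over all such $\bar\theta$ and all large $n$. Using that $\bbM^s$ is \emph{exactly quadratic} in $(\beta,\delta)$ for fixed $\psi$, the blocks behave as follows. (i) $\nabla^2_\gamma\bbM^s(\bar\theta)$ has blocks $2\bbE[XX^\top]$ and $2\bbE[XX^\top K(Q^\top\bar\psi/\sigma_n)]$, so by dominated convergence $\nabla^2_\gamma\bbM^s(\bar\theta)\to 2Q^\gamma$ uniformly, and $Q^\gamma\succ0$ since $\bbE[XX^\top\mathds{1}_{Q^\top\psi_0>0}]$ and $\bbE[XX^\top\mathds{1}_{Q^\top\psi_0\le0}]$ are each $\succeq c_-\,\P(\cdot)\,I\succ0$ by Assumptions~\ref{eq:assm}(2),(4). (ii) The cross block $\sqrt{\sigma_n}\,\nabla^2_{\gamma\psi}\bbM^s(\bar\theta)\to0$: the change of variables $Q^\top\bar\psi=\sigma_n s$ collapses the $\sigma_n^{-1}$ from differentiating the kernel against $\int K'=1$, leaving $\nabla^2_{\gamma\psi}\bbM^s(\bar\theta)=O(1)$. (iii) For the $\psi$ block, write $\rho(Q;\beta,\delta)=\bbE[-2(Y-X^\top\beta)(X^\top\delta)+(X^\top\delta)^2\mid Q]$; differentiating $K=\Phi$ twice and using the same change of variables, the smooth part of $\rho$ contributes only at order $\sigma_n$ while the jump of $\mathds{1}_{Q^\top\psi_0>0}$ inside $\rho$ produces the leading term,
\[
\sigma_n\nabla^2_\psi\bbM^s(\bar\theta)\ =\ 2\,\bbE\!\Big[\tilde Q\tilde Q^\top\, \delta_0^\top H(Q)\,\bar\delta\; f_{\bar\psi}(0\mid\tilde Q)\,\phi\!\big(\tau/\sigma_n\big)\Big]+O(\sigma_n)\,,
\]
where $H(Q)=\bbE[XX^\top\mid Q]$ and $\tau=\tau(\tilde Q)=\tilde Q^\top(\widetilde{\bar\psi}-\tilde\psi_0)$. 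Since $\|\bar\psi-\psi_0\|=O(\sigma_n)$ we have $|\tau/\sigma_n|\le\|\tilde Q\|(\cK+o(1))$, hence $\phi(\tau/\sigma_n)\ge\phi(\|\tilde Q\|(\cK+1))>0$ for large $n$; bounding $\delta_0^\top H(Q)\bar\delta\ge c_B>0$ on the small neighborhood (using $\delta_0\neq0$ and Assumption~\ref{eq:assm}(2)), truncating $\|\tilde Q\|\le R$, and invoking Assumption~\ref{eq:assm}(3)\,—\,which via Cauchy--Schwarz forces $\bbE[f_0(0\mid\tilde Q)\tilde Q\tilde Q^\top]\succ0$\,—\,yields $\sigma_n\nabla^2_\psi\bbM^s(\bar\theta)\succeq c_\psi I-O(\sigma_n)$, uniformly. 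A Schur--complement (or Gershgorin) argument, legitimate because the off--diagonal block is $o(1)$, then gives $D_n\nabla^2\bbM^s(\bar\theta)D_n\succeq cI$, and the near regime follows.

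\emph{Main obstacle.} The crux is step (iii): correctly identifying both the $\sigma_n^{-1}$ order and the \emph{positive} limiting quadratic form of the $\psi$--Hessian. This requires integrating $K''=\Phi''$ against the discontinuity of $\mathds{1}_{Q^\top\psi_0>0}$ sitting inside $\rho$\,—\,it is exactly this jump, and not the smooth part of $\rho$, that creates the leading term\,—\,while simultaneously keeping $\phi(\tau/\sigma_n)$ bounded away from $0$ (which is why the preliminary bound $\|\psi_0^s-\psi_0\|=O(\sigma_n)$ is needed) and upgrading the ``mass near the hyperplane'' bound of Assumption~\ref{eq:assm}(3) to strict positive definiteness, all \emph{uniformly} over the shrinking ball $\|\bar\psi-\psi_0^s\|\le\cK\sigma_n$ and over $(\bar\beta,\bar\delta)$ in the fixed neighborhood. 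The two--regime split, the $\gamma$--block, and the vanishing of the cross block are routine by comparison.
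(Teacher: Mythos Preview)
Your proposal is correct and follows essentially the same two–regime strategy as the paper: the far regime via Lemmas~\ref{lem:uniform_smooth} and~\ref{lem:pop_curv_nonsmooth}, and the near regime via a second–order Taylor expansion with blockwise analysis of the scaled Hessian, the key step being the identification of the $\psi$–block's leading term from the jump of $\mathds{1}_{Q^\top\psi_0>0}$ against $K''$. Your use of Cauchy--Schwarz on Assumption~\ref{eq:assm}(3) to obtain $\bbE[f_0(0\mid\tilde Q)\tilde Q\tilde Q^\top]\succ 0$ is a clean alternative to the paper's compactness argument, but otherwise the arguments coincide.
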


The proofs of the three lemmas above can be found in Appendix \ref{sec:supp_B}. We next move to the proof of Lemma \ref{lem:rate_smooth}. In Lemma \ref{lem:pop_smooth_curvarture} we have established the curvature of the smooth loss function $\bbM^s(\th)$ around $\th_0^s$. To determine the rate of convergence of $\hat \th^s$ to $\th_0^s$, we further need an upper bound on the modulus of continuity of our loss function. Towards that end, first recall that our loss function is:
$$
f_{\th}(Y, X, Q) =  \left(Y - X^{\top}\beta\right)^2 +  \left[-2\left(Y - X^{\top}\beta\right)X^{\top}\delta + (X^{\top}\delta)^2\right] K\left(\frac{Q^{\top}\psi}{\sigma_n}\right)
$$
The centered loss function can be written as: 
\begin{align}
    & f_{\th}(Y, X, Q) - f_{\th_0^s}(Y, X, Q) \notag \\
    & = \left(Y - X^{\top}\beta\right)^2 +  \left[-2\left(Y - X^{\top}\beta\right)X^{\top}\delta + (X^{\top}\delta)^2\right] K\left(\frac{Q^{\top}\psi}{\sigma_n}\right) \notag \\
    & \qquad \qquad \qquad \qquad - \left(Y - X^{\top}\beta_0^s\right)^2 -  \left[-2\left(Y - X^{\top}\beta_0^s\right)X^{\top}\delta_0^s + (X^{\top}\delta_0^s)^2\right] K\left(\frac{Q^{\top}\psi_0^s}{\sigma_n}\right) \notag \\
    & = \left(Y - X^{\top}\beta\right)^2 +  \left[-2\left(Y - X^{\top}\beta\right)X^{\top}\delta + (X^{\top}\delta)^2\right] K\left(\frac{Q^{\top}\psi}{\sigma_n}\right) \notag \\
    & \qquad \qquad \qquad \qquad - \left(Y - X^{\top}\beta_0^s\right)^2 -  \left[-2\left(Y - X^{\top}\beta_0^s\right)X^{\top}\delta_0^s + (X^{\top}\delta_0^s)^2\right] K\left(\frac{Q^{\top}\psi}{\sigma_n}\right) \notag \\
    & \qquad \qquad \qquad \qquad \qquad \qquad - \left[-2\left(Y - X^{\top}\beta_0^s\right)X^{\top}\delta_0^s + (X^{\top}\delta_0^s)^2\right] \left\{K\left(\frac{Q^{\top}\psi_0^s}{\sigma_n}\right) -  K\left(\frac{Q^{\top}\psi}{\sigma_n}\right)\right\} \notag \\
    & = \underbrace{\left(Y - X^{\top}\beta\right)^2 - \left(Y - X^{\top}\beta_0^s\right)^2}_{M_1}  \notag \\
    & \qquad + \underbrace{\left\{ \left[-2\left(Y - X^{\top}\beta\right)X^{\top}\delta + (X^{\top}\delta)^2\right]  -  \left[-2\left(Y - X^{\top}\beta_0^s\right)X^{\top}\delta_0^s + (X^{\top}\delta_0^s)^2\right]\right\} K\left(\frac{Q^{\top}\psi}{\sigma_n}\right)}_{M_2} \notag \\
    & \qquad \qquad \qquad \qquad \qquad \qquad - \underbrace{\left[-2\left(Y - X^{\top}\beta_0^s\right)X^{\top}\delta_0^s + (X^{\top}\delta_0^s)^2\right] \left\{K\left(\frac{Q^{\top}\psi_0^s}{\sigma_n}\right) -  K\left(\frac{Q^{\top}\psi}{\sigma_n}\right)\right\}}_{M_3} \notag \\
    \label{eq:expand_f} & := M_1 + M_2 + M_3
\end{align}
For the rest of the analysis, fix $\zeta > 0$ and consider the collection of functions $\cF_{\zeta}$ which is defined as: 
$$
\cF_{\zeta} = \left\{f_\th - f_{\th^s}: d_*(\th, \th^s) \le \zeta\right\} \,.
$$
First note that $\cF_\zeta$ has bounded uniform entropy integral (henceforth BUEI) over $\zeta$. To establish this, it is enough to argue that the collection $\cF = \{ f_\th : \th \in \Theta\}$ is BUEI. Note that the functions $X \mapsto X^{\top}\beta$ has VC dimension $p$ and so is the map $X \mapsto X^{\top}(\beta + \delta)$. Therefore the functions $(X, Y) \mapsto (Y - X^{\top}(\beta + \delta))^2 - (Y - X^{\top}\beta)^2$ is also BUEI, as composition with monotone function (here $x^2$) and taking difference keeps this property. Further by the hyperplane $Q \mapsto Q^{\top}\psi$ also has finite dimension (only depends on the dimension of $Q$) and the VC dimension does not change by scaling it with $\sigma_n$. Therefore the functions $Q \mapsto Q^{\top}\psi/sigma_n$ has same VC dimension as $Q \mapsto Q^{\top}\psi$ which is independent of $n$. Again, as composition of monotone function keeps BUEI property, the functions $Q \mapsto K(Q^{\top}\psi/\sigma_n)$ is also BUEI. As the product of two BUEI class is BUEI, we conclude that $\cF$ (and hence $\cF_\zeta$) is BUEI. 
\\\\
\noindent
Now to bound the modulus of continuity we use Lemma 2.14.1 of \cite{vdvw96}: 
\begin{equation*}
\label{eq:moc_bound}
\sqrt{n}\bbE\left[\sup_{\theta: d_*(\theta, \theta_0^s) \le \zeta} \left|\left(\bbP_n - P\right)\left(f_\theta - f_{\theta_0^s}\right)\right|\right] \lesssim \cJ(1, \cF_\zeta) \sqrt{\bbE\left[F_{\zeta}^2(X, Y, Q)\right]}
\end{equation*}
where $F_\zeta$ is some envelope function of $\cF_\zeta$. As the function class $\cF_\zeta$ has bounded entropy integral, $\cJ(1, \cF_\zeta) $ can be bounded above by some constant independent of $n$. We next calculate the order of the envelope function $F_\zeta$. Recall that, by definition of envelope function is: 
$$
F_{\zeta}(X, Y, Q) \ge \sup_{\theta: d_*(\theta, \theta_0^s) \le \zeta} \left| f_{\theta} - f_{\theta_0}\right| \,.
$$
and we can write $f_\th - f_{\th_0^s} = M_1 + M_2 + M_3$ which follows from equation \eqref{eq:expand_f}. Therefore, to find the order of the envelope function, it is enough to find the order of bounds of $M_1, M_2, M_3$ over the set $d_*(\th, \th_0^s) \le \zeta$. We start with $M_1$: 
\begin{align}
    \sup_{d_*(\th, \th_0^s) \le \zeta}|M_1| & = \sup_{d_*(\th, \th_0^s) \le \delta}\left|\left(Y - X^{\top}\beta\right)^2 - \left(Y - X^{\top}\beta_0^s\right)^2\right| \notag \\
    & = \sup_{d_*(\th, \th_0^s) \le \zeta} \left|2YX^{\top}(\beta_0^s - \beta) + (X^{\top}\beta)^2 - (X^{\top}\beta_0^S)^2\right| \notag \\
    & \le \sup_{d_*(\th, \th_0^s) \le \zeta} \|\beta - \beta_0^s\| \left[2|Y|\|X\| + (\|\beta_0^s\| + \zeta)\|X\|^2\right] \notag \\
    \label{eq:env_1} & \le  \zeta\left[2|Y|\|X\| + (\|\beta_0^s\| + \zeta)\|X\|^2\right] := F_{1, \zeta}(X, Y, Q) \hspace{0.1in} [\text{Envelope function of }M_1]
\end{align}
and the second term: 
\allowdisplaybreaks
\begin{align}
    & \sup_{d_*(\th, \th_0^s) \le \zeta} |M_2| \notag \\
    & = \sup_{d_*(\th, \th_0^s) \le \zeta} \left|\left\{\left[-2\left(Y - X^{\top}\beta\right)X^{\top}\delta + (X^{\top}\delta)^2\right] \right. \right. \notag \\
    & \qquad \qquad \qquad \qquad \left. \left. -  \left[-2\left(Y - X^{\top}\beta_0^s\right)X^{\top}\delta_0^s + (X^{\top}\delta_0^s)^2\right]\right\}\right|K\left(\frac{Q^{\top}\psi}{\sigma_n}\right) \notag \\
    & \le \sup_{d_*(\th, \th_0^s) \le \zeta} \left|\left\{\left[-2\left(Y - X^{\top}\beta\right)X^{\top}\delta + (X^{\top}\delta)^2\right] \right. \right. \notag \\
    & \qquad \qquad \qquad \qquad \left. \left. -  \left[-2\left(Y - X^{\top}\beta_0^s\right)X^{\top}\delta_0^s + (X^{\top}\delta_0^s)^2\right]\right\}\right| \notag \\
    & = \sup_{d_*(\th, \th_0^s) \le \zeta} \left|\left\{\left[2Y(X^{\top}\delta_0^s - X^{\top}\delta) + 2[(X^{\top}\beta)(X^{\top}\delta) \right. \right. \right.  \notag \\
    & \qquad \qquad \qquad \qquad \left. \left. \left.  - (X^{\top}\beta_0^s)(X^{\top}\delta_0^s)] + (X^{\top}\delta)^2 - (X^{\top}\delta_0^s)^2\right]\right\}\right| \notag \\
    & \le \sup_{d_*(\th, \th_0^s) \le \zeta} \left\{\|\delta - \delta_0^s\|2|Y|\|X\| + 2\|\beta - \beta_0\|\|X\|\|\delta\| \right. \notag \\
    & \qquad \qquad \qquad \qquad \left. + 2\|\delta - \delta_0^s\|\|X\|\|\beta_0^s\| + 2\|X\|\|\delta + \delta_0^s\|\|\delta - \delta_0^s\|\right\} \notag \\ \notag \\
    & \le \zeta \left[2|Y|\|X\| + 2\|X\|(\|\delta_0^s\| + \|\zeta\|) + 2\|X\|\|\beta_0^s\| + 2\|X\|(\|\delta_0^s\| + \zeta)\right] \notag \\ 
    \label{eq:env_2}& = \zeta \times 2\|X\|\left[2|Y| + 2(\|\delta_0^s\| + \|\zeta\|) + \|\beta_0^s\|\right] := F_{2, \zeta}(X, Y, Q) \hspace{0.1in} [\text{Envelope function of }M_2]
    \end{align}
For the third term, note that: 
\begin{align*}
& \sup_{d_*(\th, \th_0^s) \le \zeta} |M_3| \\
& \le \left|\left[-2\left(Y - X^{\top}\beta_0^s\right)X^{\top}\delta_0^s + (X^{\top}\delta_0^s)^2\right]\right| \times  \sup_{d_*(\th, \th_0^s) \le \zeta} \left|\left\{K\left(\frac{Q^{\top}\psi_0^s}{\sigma_n}\right) -  K\left(\frac{Q^{\top}\psi}{\sigma_n}\right)\right\}\right| \\
& := F_{3, \zeta} (X, Y, Q)
\end{align*}
Henceforth, we define the envelope function to be $F_\zeta = F_{\zeta, 1} + F_{\zeta, 2} + F_{\zeta, 3}$. Hence we have by triangle inequality: 
$$
\sqrt{\bbE\left[F_{\zeta}^2(X, Y, Q)\right]} \le \sum_{i=1}^3 \sqrt{\bbE\left[F_{i, \zeta}^2(X, Y, Q)\right]}
$$
From equation \eqref{eq:env_1} and \eqref{eq:env_2} we have: 
\begin{equation}
\label{eq:moc_bound_2}
\sqrt{\bbE\left[F_{1, \zeta}^2(X, Y, Q)\right]} + \sqrt{\bbE\left[F_{2, \zeta}^2(X, Y, Q)\right]} \lesssim \zeta \,.
\end{equation}
For $F_{3, \zeta}$, first note that: 
\begin{align*}
    & \bbE\left[\left|\left[-2\left(Y - X^{\top}\beta_0^s\right)X^{\top}\delta_0^s + (X^{\top}\delta_0^s)^2\right]\right|^2 \mid Q\right] \\
    & \le 8\bbE\left[\left(Y - X^{\top}\beta_0^s\right)^2(X^{\top}\delta_0)^2 \mid Q\right] + 2\bbE[(X^{\top}\delta_0^s)^4 \mid Q] \\
    & \le \left\{8\|\beta - \beta_0^s\|^2\|\delta_0\|^2 + 8\|\delta_0\|^4 + 2\|\delta_0^s\|^4\right\}m_4(Q) \,.
\end{align*}
where $m_4(Q)$ is defined in Assumption \ref{eq:assm}. In this part, we have to tackle the dichotomous behavior of $\psi$ around $\psi_0^s$ carefully. Henceforth define $d_*^2(\psi, \psi_0^s)$ as: 
\begin{align*}
    d_*^2(\psi, \psi_0^s) =  & \frac{\|\psi - \psi_0^s\|^2}{\sigma_n}\mathds{1}_{\|\psi - \psi_0^s\| \le \cK\sigma_n} + \|\psi - \psi_0^s\|\mathds{1}_{\|\psi - \psi_0^s\| > \cK\sigma_n} 
\end{align*}
This is a slight abuse of notation, but the reader should think of it as the part of $\psi$ in $d_*^2(\th, \th_0^s)$. Define $B_{\zeta}(\psi_0^s)$ to be set of all $\psi$'s such that $d^2_*(\psi, \psi_0^s) \le \zeta^2$. We can decompose $B_{\zeta}(\psi_0^s)$ as a disjoint union of two sets: 
\begin{align*}
    B_{\zeta, 1}(\psi_0^s) & = \left\{\psi: d^2_*(\psi, \psi_0^s) \le \zeta^2, \|\psi - \psi_0^s\| \le \cK\sigma_n\right\} \\
    & = \left\{\psi:\frac{\|\psi - \psi_0^s\|^2}{\sigma_n} \le \zeta^2, \|\psi - \psi_0^s\| \le \cK\sigma_n\right\}  \\
    & = \left\{\psi:\|\psi - \psi_0^s\| \le \zeta \sqrt{\sigma_n}, \|\psi - \psi_0^s\| \le \cK\sigma_n\right\}  \\\\
    B_{\zeta, 2}(\psi_0^s) & = \left\{\psi: d^2_*(\psi, \psi_0^s) \le \zeta^2, \|\psi - \psi_0^s\| > \cK\sigma_n\right\} \\
    & = \left\{\psi: \|\psi - \psi_0^s\| \le \zeta^2, \|\psi - \psi_0^s\| > \cK\sigma_n\right\} 
\end{align*}
Assume $\cK > 1$. The case where $\cK < 1$ follows from similar calculations and hence skipped for brevity. Consider the following two cases: 
\\\\
\noindent
{\bf Case 1: }Suppose $\zeta \le \sqrt{\cK\sigma_n}$. Then $B_{\zeta, 2} = \phi$. Also as $\cK > 1$, we have: $\zeta\sqrt{\sigma_n} \le \cK\sigma_n$. Hence we have: 
$$
\sup_{d_*^2(\psi, \psi_0^s) \le \zeta^2}\|\psi - \psi_0^s\| = \sup_{B_{\zeta, 1}}\|\psi - \psi_0^s\| = \zeta\sqrt{\sigma_n} \,.
$$
This implies: 
\begin{align*}
    & \sup_{d_*(\th, \th_0^s) \le \zeta} \left|\left\{K\left(\frac{Q^{\top}\psi_0^s}{\sigma_n}\right) -  K\left(\frac{Q^{\top}\psi}{\sigma_n}\right)\right\}\right|^2 \\
    & \le \max\left\{\left|\left\{K\left(\frac{Q^{\top}\psi_0^s}{\sigma_n}\right) -  K\left(\frac{Q^{\top}\psi_0^s}{\sigma_n} + \|\tilde Q\|\frac{\zeta}{\sqrt{\sigma_n}}\right)\right\}\right|^2, \right. \\
    & \qquad \qquad \qquad \left. \left|\left\{K\left(\frac{Q^{\top}\psi_0^s}{\sigma_n}\right) -  K\left(\frac{Q^{\top}\psi_0^s}{\sigma_n} - \|\tilde Q\|\frac{\zeta}{\sqrt{\sigma_n}}\right)\right\}\right|^2\right\} \\
    & := \max\{T_1, T_2\} \,.
\end{align*}
Therefore we have: 
$$
\bbE\left[F^2_{3, \zeta}(X, Y, Q)\right] \le   \bbE[m_4(Q) T_1] +   \bbE[m_4(Q) T_2] \,.
$$
Now: 
\begin{align}
    & \bbE[m_4(Q) T_1] \notag \\
    & = \bbE\left[m_4(Q) \left|\left\{K\left(\frac{Q^{\top}\psi_0^s}{\sigma_n}\right) -  K\left(\frac{Q^{\top}\psi_0^s}{\sigma_n} + \|\tilde Q\|\frac{\zeta}{\sqrt{\sigma_n}}\right)\right\}\right|^2\right] \notag \\
    & = \sigma_n \int_{\bbR^{p-1}}\int_{-\infty}^{\infty} m_4(\sigma_nt - \tilde q^{\top}\tilde \psi_0^s, \tilde q)  \left|K\left(t\right) -  K\left(t + \|\tilde q\|\frac{\zeta}{\sqrt{\sigma_n}}\right)\right|^2 \ f_s(\sigma_nt\mid \tilde q) \ dt \ f(\tilde q) \ d\tilde q \notag\\
    & \le \sigma_n \int_{\bbR^{p-1}}\int_{-\infty}^{\infty} m_4(\sigma_nt - \tilde q^{\top}\tilde \psi_0^s, \tilde q) \left|K\left(t\right) -  K\left(t + \|\tilde q\|\frac{\zeta}{\sqrt{\sigma_n}}\right)\right| \ f_s(\sigma_nt\mid \tilde q) \ dt \ f(\tilde q) \ d\tilde q \notag \\
    & = \sigma_n \int_{\bbR^{p-1}}\int_{-\infty}^{\infty}m_4(\sigma_nt - \tilde q^{\top}\tilde \psi_0^s, \tilde q) \int_{t}^{t + \|\tilde q\|\frac{\zeta}{\sqrt{\sigma_n}}} K'(s) \ ds \ f_s(\sigma_nt\mid \tilde q) \ dt \ f(\tilde q) \ d\tilde q \notag \\
    & = \sigma_n \int_{\bbR^{p-1}}\int_{-\infty}^{\infty}K'(s) \int_{s- \|\tilde q\|\frac{\zeta}{\sqrt{\sigma_n}}}^s m_4(\sigma_nt - \tilde q^{\top}\tilde \psi_0^s, \tilde q) f_s(\sigma_nt\mid \tilde q) \ dt \ ds 
    \ f(\tilde q) \ d\tilde q \notag \\
    & = \zeta \sqrt{\sigma_n} \bbE[\|\tilde Q\|m_4(-\tilde Q^{\top}\psi_0^s, \tilde Q)f_s(0 \mid \tilde Q)] + R \notag 
\end{align}
where as before we split $R$ into three parts $R = R_1 + R_2 + R_3$. 
\begin{align}
    \left|R_1\right| & = \left|\sigma_n \int_{\bbR^{p-1}}\int_{-\infty}^{\infty}K'(s) \int_{s- \|\tilde q\|\frac{\zeta}{\sqrt{\sigma_n}}}^s m_4(- \tilde q^{\top}\tilde \psi_0^s, \tilde q) (f_s(\sigma_nt\mid \tilde q) - f_s(0 \mid \tilde q)) \ dt \ ds \ f(\tilde q) \ d\tilde q\right| \notag \\
   \label{eq:r1_env_1} & \le \sigma_n^2 \int_{\bbR^{p-1}}m_4(- \tilde q^{\top}\tilde \psi_0^s, \tilde q)\dot f_s(\tilde q) \int_{-\infty}^{\infty}K'(s) \int_{s- \|\tilde q\|\frac{\zeta}{\sqrt{\sigma_n}}}^s |t| dt\ ds \ f(\tilde q) \ d\tilde q 
    \end{align}
We next calculate the inner integral (involving $(s,t)$) of equation \eqref{eq:r1_env_1}: 
\begin{align*}
& \int_{-\infty}^{\infty}K'(s) \int_{s- \|\tilde q\|\frac{\zeta}{\sqrt{\sigma_n}}}^s |t| dt\ ds \\
& =\left(\int_{-\infty}^0 + \int_0^{\|\tilde q\|\frac{\zeta}{\sqrt{\sigma_n}}} + \int_{\|\tilde q\|\frac{\zeta}{\sqrt{\sigma_n}}}^{\infty}\right)K'(s)  \int_{s- \|\tilde q\|\frac{\zeta}{\sqrt{\sigma_n}}}^s |t| dt\ ds \\
& = \frac12\int_{-\infty}^0 K'(s)\left[\left(s- \|\tilde q\|\frac{\zeta}{\sqrt{\sigma_n}}\right)^2 - s^2\right] \ ds  +  \frac12\int_0^{\|\tilde q\|\frac{\zeta}{\sqrt{\sigma_n}}} K'(s)\left[\left(s- \|\tilde q\|\frac{\zeta}{\sqrt{\sigma_n}}\right)^2 + s^2\right] \ ds \\
& \qquad \qquad \qquad \qquad + \frac12 \int_{\|\tilde q\|\frac{\zeta}{\sqrt{\sigma_n}}}^{\infty}K'(s) \left[s^2 - \left(s- \|\tilde q\|\frac{\zeta}{\sqrt{\sigma_n}}\right)^2\right] \ ds\\
& = -\|\tilde q\|\frac{\zeta}{\sqrt{\sigma_n}}  \int_{-\infty}^0 K'(s) s  \ ds + \|\tilde q\|^2\frac{\zeta^2}{2\sigma_n}    \int_{-\infty}^0 K'(s)  \ ds + \int_0^{\|\tilde q\|\frac{\zeta}{\sqrt{\sigma_n}}} s^2K'(s) \ ds \\ 
& \qquad \qquad  -\|\tilde q\|\frac{\zeta}{\sqrt{\sigma_n}} \int_0^{\|\tilde q\|\frac{\zeta}{\sqrt{\sigma_n}}} sK'(s) \ ds +  \|\tilde q\|^2\frac{\zeta^2}{2\sigma_n} \int_0^{\|\tilde q\|\frac{\zeta}{\sqrt{\sigma_n}}} K'(s) \ ds \\
& \qquad \qquad \qquad + \|\tilde q\|\frac{\zeta}{\sqrt{\sigma_n}} \int_{\|\tilde q\|\frac{\zeta}{\sqrt{\sigma_n}}}^{\infty} sK'(s) \ ds -  \|\tilde q\|^2\frac{\zeta^2}{2\sigma_n}  \int_{\|\tilde q\|\frac{\zeta}{\sqrt{\sigma_n}}}^{\infty} K'(s) \ ds \\
&  =   \|\tilde q\|^2\frac{\zeta^2}{2\sigma_n}\left[2K\left(\|\tilde q\|\frac{\zeta}{\sqrt{\sigma_n}}\right) - 1\right] + \|\tilde q\|\frac{\zeta}{\sqrt{\sigma_n}} \left[ -\int_{-\infty}^0 K'(s) s  \ ds -  \right. \\
& \qquad \qquad \left. \int_0^{\|\tilde q\|\frac{\zeta}{\sqrt{\sigma_n}}} K'(s)s \ ds + \int_{\|\tilde q\|\frac{\zeta}{\sqrt{\sigma_n}}}^{\infty} sK'(s) \ ds\right]  + \int_0^{\|\tilde q\|\frac{\zeta}{\sqrt{\sigma_n}}} s^2K'(s) \ ds \\
&  =   \|\tilde q\|^2\frac{\zeta^2}{\sigma_n}\left[K\left(\|\tilde q\|\frac{\zeta}{\sqrt{\sigma_n}}\right) - K(0)\right] + \|\tilde q\|\frac{\zeta}{\sqrt{\sigma_n}} \left[ -\int_{-\infty}^{-\|\tilde q\|\frac{\zeta}{\sqrt{\sigma_n}}} K'(s) s  \ ds + \int_{\|\tilde q\|\frac{\zeta}{\sqrt{\sigma_n}}}^{\infty} sK'(s) \ ds\right] \\
& \qquad \qquad + \int_0^{\|\tilde q\|\frac{\zeta}{\sqrt{\sigma_n}}} s^2K'(s) \ ds \\
&  =   \|\tilde q\|^2\frac{\zeta^2}{\sigma_n}\left[K\left(\|\tilde q\|\frac{\zeta}{\sqrt{\sigma_n}}\right) - K(0)\right] + \|\tilde q\|\frac{\zeta}{\sqrt{\sigma_n}}\int_{-\infty}^{\infty} K'(s)|s|\mathds{1}_{|s| \ge \|\tilde q\|\frac{\zeta}{\sqrt{\sigma_n}}}  \ ds + \int_0^{\|\tilde q\|\frac{\zeta}{\sqrt{\sigma_n}}} s^2K'(s) \ ds \\
& \le \dot{K}_+ \|\tilde q\|^3\frac{\zeta^3}{\sigma^{3/2}_n} + \|\tilde q\|\frac{\zeta}{\sqrt{\sigma_n}} \int_{-\infty}^{\infty} K'(s)|s|  \ ds + \|\tilde q\|^2\frac{\zeta^2}{\sigma_n}\left(K\left(\|\tilde q\|\frac{\zeta}{\sqrt{\sigma_n}}\right) - K(0)\right) \\
& \lesssim \|\tilde q\|^3\frac{\zeta^3}{\sigma^{3/2}_n} + \|\tilde q\|\frac{\zeta}{\sqrt{\sigma_n}}  
\end{align*}
Putting this bound in equation \eqref{eq:r1_env_1} we obtain: 
    \begin{align*}
    |R_1| & \le \frac{\sigma_n^2}{2}  \int_{\bbR^{p-1}}m_4(- \tilde q^{\top}\tilde \psi_0^s, \tilde q)\dot f_s(\tilde q) \left(\|\tilde q\|^3\frac{\zeta^3}{\sigma^{3/2}_n} + \|\tilde q\|\frac{\zeta}{\sqrt{\sigma_n}}\right) \ f(\tilde q) \ d\tilde q \\
    & \le \frac{\zeta^3}{2\sqrt{\sigma_n}} \bbE\left[m_4(- \tilde Q^{\top}\tilde \psi_0^s, \tilde Q)\dot f_s(\tilde Q)\|\tilde Q\|^3\right] + \frac{\zeta \sqrt{\sigma_n}}{2} \bbE\left[m_4(- \tilde Q^{\top}\tilde \psi_0^s, \tilde Q)\dot f_s(\tilde Q)\|\tilde Q\|\right] 
\end{align*}
and 
\begin{align*}
    & \left|R_2\right| \\
    & = \left|\sigma_n \int_{\bbR^{p-1}}\int_{-\infty}^{\infty}K'(s) \int_{s- \|\tilde q\|\frac{\zeta}{\sqrt{\sigma_n}}}^s \left(m_4(\sigma_n t - \tilde q^{\top}\tilde \psi_0^s, \tilde q) - m_4( - \tilde q^{\top}\tilde \psi_0^s, \tilde q)\right)f_s(0 \mid \tilde q) \ dt \ ds \ f(\tilde q) \ d\tilde q\right| \\
    & \le \sigma_n^2 \int_{\bbR^{p-1}}\dot m_4( \tilde q)f_s(0 \mid \tilde q) \int_{-\infty}^{\infty}K'(s) \int_{s- \|\tilde q\|\frac{\zeta}{\sqrt{\sigma_n}}}^s |t| dt\ ds \ f(\tilde q) \ d\tilde q \\
    & \le \sigma_n^2 \int_{\bbR^{p-1}}\dot m_4( \tilde q)f_s(0 \mid \tilde q) \left(\|\tilde q\|^3\frac{\zeta^3}{\sigma^{3/2}_n} + \|\tilde q\|\frac{\zeta}{\sqrt{\sigma_n}}\right) \ f(\tilde q) \ d\tilde q \\
    & = \zeta \sigma_n^{3/2} \bbE\left[\dot m_4( \tilde Q)f_s(0 \mid \tilde Q)\|\tilde Q\|\right] + \zeta^3 \sqrt{\sigma_n} \bbE\left[\dot m_4( \tilde Q)f_s(0 \mid \tilde Q)\|\tilde Q\|^3\right]
\end{align*}
The third residual $R_3$ is even higher order term and hence skipped. It is immediate that the order of the remainders are equal to or smaller than $\zeta \sqrt{\sigma_n}$ which implies: 
$$
\bbE[m_4(Q)T_1] \lesssim \zeta\sqrt{\sigma_n} \,.
$$
The calculation for $T_2$ is similar and hence skipped for brevity. Combining conclusions for $T_1$ and $T_2$ we conclude when $\zeta \le \sqrt{\cK \sigma_n}$: 
\begin{align}
& \bbE\left[F^2_{3, \zeta}(X, Y, Q)\right] \notag \\
    & \bbE\left[\left|\left[-2\left(Y - X^{\top}\beta_0^s\right)X^{\top}\delta_0^s + (X^{\top}\delta_0^s)^2\right]\right|^2 \times  \sup_{d_*(\th, \th_0^s) \le \zeta} \left|\left\{K\left(\frac{Q^{\top}\psi_0^s}{\sigma_n}\right) -  K\left(\frac{Q^{\top}\psi}{\sigma_n}\right)\right\}\right|^2\right] \notag \\
    & \lesssim \bbE\left[m_4(Q)\sup_{d_*(\th, \th_0^s) \le \zeta} \left|\left\{K\left(\frac{Q^{\top}\psi_0^s}{\sigma_n}\right) -  K\left(\frac{Q^{\top}\psi}{\sigma_n}\right)\right\}\right|^2\right] \notag \\
    \label{eq:env_3} & \lesssim \zeta \sqrt{\sigma_n} \,.
\end{align}
\\
\noindent
{\bf Case 2: } Now consider $\zeta > \sqrt{\cK \sigma_n}$. Then it is immediate that: 
$$
\sup_{d_*^2(\psi, \psi^s_0) \le \zeta^2} \|\psi - \psi^s_0\| = \zeta^2 \,.
$$
Using this we have: 
\begin{align}
    & \bbE[m_4(Q) T_1] \notag \\
    & = \bbE\left[m_4(Q)\left|\left\{K\left(\frac{Q^{\top}\psi_0^s}{\sigma_n}\right) -  K\left(\frac{Q^{\top}\psi_0^s}{\sigma_n} + \|\tilde Q\|\frac{\zeta^2}{\sqrt{\sigma_n}}\right)\right\}\right|^2\right] \notag \\
    & = \sigma_n \int_{\bbR^{p-1}}\int_{-\infty}^{\infty} m_4(\sigma_nt - \tilde q^{\top}\tilde \psi_0^s, \tilde q)  \left|K\left(t\right) -  K\left(t + \|\tilde q\|\frac{\zeta^2}{\sigma_n}\right)\right|^2 \ f_s(\sigma_nt\mid \tilde q) \ dt \ f(\tilde q) \ d\tilde q \notag \\
    & \le \sigma_n \int_{\bbR^{p-1}}\int_{-\infty}^{\infty} m_4(\sigma_nt - \tilde q^{\top}\tilde \psi_0^s, \tilde q)  \left|K\left(t\right) -  K\left(t + \|\tilde q\|\frac{\zeta^2}{\sigma_n}\right)\right| \ f_s(\sigma_nt\mid \tilde q) \ dt \ f(\tilde q) \ d\tilde q \notag \\
     & \le \sigma_n \int_{\bbR^{p-1}}\int_{-\infty}^{\infty} m_4(\sigma_nt - \tilde q^{\top}\tilde \psi_0^s, \tilde q)\|\tilde q\|\frac{\zeta^2}{\sigma_n} \ f_s(\sigma_nt\mid \tilde q) \ dt \ f(\tilde q) \ d\tilde q \notag \\
      & = \zeta^2 \int_{\bbR^{p-1}}m_4(- \tilde q^{\top}\tilde \psi_0^s, \tilde q)  f_s(0 \mid \tilde q)\|\tilde q\| \ f(\tilde q) \ d\tilde q + R \notag\\
    & \le \zeta^2  \bbE\left[\|\tilde Q\|m_4\left(- \tilde Q^{\top}\tilde \psi_0^s, \tilde Q\right)  f_s(0 \mid \tilde Q)\right]  + R \notag 
\end{align}
The analysis of the remainder term is similar and if is of higher order. This concludes when $\zeta > \sqrt{K\sigma_n}$: 
\begin{align}
& \bbE\left[F^2_{3, \zeta}(X, Y, Q)\right] \notag \\
    & \bbE\left[\left|\left[-2\left(Y - X^{\top}\beta_0^s\right)X^{\top}\delta_0^s + (X^{\top}\delta_0^s)^2\right]\right|^2 \times  \sup_{d_*(\th, \th_0^s) \le \zeta} \left|\left\{K\left(\frac{Q^{\top}\psi_0^s}{\sigma_n}\right) -  K\left(\frac{Q^{\top}\psi}{\sigma_n}\right)\right\}\right|^2\right] \notag \\
    & \lesssim \bbE\left[m_4(Q)\sup_{d_*(\th, \th_0^s) \le \zeta} \left|\left\{K\left(\frac{Q^{\top}\psi_0^s}{\sigma_n}\right) -  K\left(\frac{Q^{\top}\psi}{\sigma_n}\right)\right\}\right|^2\right] \notag \\
    \label{eq:env_4}  & \lesssim \zeta^2
\end{align}
Combining \eqref{eq:env_3}, \eqref{eq:env_4} with equation \eqref{eq:moc_bound_2} we have:
\begin{align*}
\sqrt{n}\bbE\left[\sup_{\theta: d_*(\theta, \theta_0^s) \le \zeta} \left|\left(\bbP_n - P\right)\left(f_\theta - f_{\theta_0^s}\right)\right|\right] & \lesssim  \sqrt{\zeta}\sigma_n^{1/4}\mathds{1}_{\zeta \le \sqrt{\cK\sigma_n}} + \zeta \mathds{1}_{\zeta > \sqrt{\cK \sigma_n}} \\
& := \phi_n(\zeta) \,.
\end{align*}
Hence to obtain rate we have to solve $r_n^2 \phi_n(1/r_n) \le \sqrt{n}$, i.e. (ignoring $\cK$ as this does not affect the rate)
$$
r_n^{3/2}\sigma_n^{1/4}\mathds{1}_{r_n \ge \sigma_n^{-1/2}} + r_n \mathds{1}_{r_n \le \sigma_n^{-1/2}} \le \sqrt{n} \,.
$$
Now if $r_n \le \sigma_n^{-1/2}$ then $r_n = \sqrt{n}$ which implies $\sqrt{n} \le \sigma_n^{-1/2}$ i.e. $n\sigma_n \to 0$ and hence contradiction. On the other hand, if $r_n \ge \sigma_n^{-1/2}$ then $r_n = n^{1/3}\sigma_n^{-1/6}$. This implies $n^{1/3}\sigma_n^{-1/6} \ge \sigma_n^{-1/2}$, i.e. $n^{1/3} \ge \sigma_n^{-1/3}$, i.e. $n\sigma_n \to \infty$ which is okay. This implies: 
$$
n^{2/3}\sigma_n^{-1/3}d^2(\hat \th^s, \th_0^s) = O_p(1) \,.
$$
Now as $n^{2/3}\sigma_n^{-1/3} \gg \sigma_n^{-1}$, we have: 
$$
\frac{1}{\sigma_n}d^2(\hat \th^s, \th_0^s) = o_p(1) \,.
$$
which further indicates $\|\hat \psi^s - \psi_0^s\|/\sigma_n = o_p(1)$. This, along with the fact that $\|\psi_0^s - \psi_0\|/\sigma_n = o(1)$ (from Lemma \ref{bandwidth}), establishes that $\|\hat \psi_0^s - \psi_0\|/\sigma_n = o_p(1)$. This completes the proof. 
\end{proof}

\section{Supplementary Lemmas for the proof of Theorem \ref{thm:regression}}
\label{sec:supp_B}
\subsection{Proof of Lemma \ref{bandwidth}}
\begin{proof}
First we establish the fact that $\theta_0^s \to \theta_0$. Note that for all $n$, we have: 
$$
\bbM^s(\theta_0^s) \le \bbM^s(\theta_0) 
$$
Taking $\limsup$ on the both side we have: 
$$
\limsup_{n \to \infty} \bbM^s(\theta_0^s) \le \bbM(\theta_0) \,.
$$
Now using Lemme \ref{lem:uniform_smooth} we have: 
$$
\limsup_{n \to \infty} \bbM^s(\theta_0^s) = \limsup_{n \to \infty} \left[\bbM^s(\theta_0^s) - \bbM(\theta_0^s) + \bbM(\theta_0^s)\right] = \limsup_{n \to \infty} \bbM(\theta_0^s) \,.
$$
which implies $\limsup_{n \to \infty} \bbM(\theta_0^s) \le \bbM(\theta_0)$ and from the continuity of $\bbM(\theta)$ and $\theta_0$ being its unique minimizer, we conclude the proof. Now, using Lemma \ref{lem:pop_curv_nonsmooth} and Lemma \ref{lem:uniform_smooth} we further obtain: 
\begin{align}
   u_- d^2(\theta_0^s, \theta_0) & \le \bbM(\th_0^s) - \bbM(\theta_0) \notag \\
    & = \bbM(\th_0^s) - \bbM^s(\th^s_0) + \underset{\le 0}{\underline{\bbM^s(\th_0^s) - \bbM^s(\theta_0)}} + \bbM^s(\th_0) - \bbM(\theta_0) \notag \\
    \label{eq:est_dist_bound} & \le \sup_{\theta \in \Theta}\left|\bbM^s(\theta) - \bbM(\theta)\right| \le K_1 \sigma_n \,. 
\end{align}
Note that we neeed consistency of $\th_0^s$ here as the lower bound in Lemma \ref{lem:pop_curv_nonsmooth} is only valid in a neighborhood around $\th_0$. As $\th_0^s$ is the minimizer of $\bbM^s(\th)$, from the first order condition we have: 
\begin{align}
    \label{eq:beta_grad}\nabla_{\beta}\bbM^s_n(\theta_0^s) & = -2\bbE\left[X(Y - X^{\top}\beta_0^s)\right] + 2\bbE \left\{\left[X_iX_i^{\top}\delta_0^s\right] K\left(\frac{Q_i^{\top}\psi_0^s}{\sigma_n}\right)\right\}  = 0 \\
    \label{eq:delta_grad}\nabla_{\delta}\bbM^s_n(\th_0^s) & = \bbE \left\{\left[-2X_i\left(Y_i - X_i^{\top}\beta_0^s\right) + 2X_iX_i^{\top}\delta_0^s\right] K\left(\frac{Q_i^{\top}\psi_0^s}{\sigma_n}\right)\right\} = 0\\
    \label{eq:psi_grad}\nabla_{\psi}\bbM^s_n(\th_0^s) & = \frac{1}{\sigma_n}\bbE \left\{\left[-2\left(Y_i - X_i^{\top}\beta_0^s\right)X_i^{\top}\delta_0^s + (X_i^{\top}\delta_0^s)^2\right]\tilde Q_i K'\left(\frac{Q_i^{\top}\psi_0^s}{\sigma_n}\right)\right\} = 0
\end{align}
We first show that $(\tilde \psi^s_0 - \tilde \psi_0)/\sigma_n \to 0$ by \emph{reductio ab absurdum}. From equation \eqref{eq:est_dist_bound}, we know $\|\psi_0^s - \psi_0\|/\sigma_n = O(1)$. Hence it has a convergent subsequent $\psi^s_{0, n_k}$, where $(\tilde \psi^s_{0, n_k} - \tilde \psi_0)/\sigma_n \to h$. If we can prove that $h = 0$, then we establish every subsequence of $\|\psi_0^s - \psi_0\|/\sigma_n$ has a further subsequence which converges to $0$ which further implies $\|\psi_0^s - \psi_0\|/\sigma_n$ converges to $0$. To save some notations, we prove that if $(\psi_0^s - \psi_0)/\sigma_n \to h$ then $h = 0$. We start with equation \eqref{eq:psi_grad}. Define $\tilde \eta = (\tilde \psi^s_0 - \tilde \psi_0)/\sigma_n = (\psi_0^s - \psi_0)/\sigma_n$ where $\tilde \psi$ is all the co-ordinates of $\psi$ except the first one, as the first co-ordinate of $\psi$ is always assumed to be $1$ for identifiability purpose. 
\allowdisplaybreaks
\begin{align}
    0 & =  \frac{1}{\sigma_n}\bbE \left\{\left[-2\left(Y_i - X_i^{\top}\beta_0^s\right)X_i^{\top}\delta_0^s + (X_i^{\top}\delta_0^s)^2\right]\tilde Q_i K'\left(\frac{Q_i^{\top}\psi_0^s}{\sigma_n}\right)\right\}  \notag \\
    & =  \frac{1}{\sigma_n}\bbE\left[\left( -2\delta_0^s XX^{\top}(\beta_0 - \beta^s_0) -2\delta_0^s XX^{\top}\delta_0\mathds{1}_{Q^{\top}\delta_0 > 0} + (X_i^{\top}\delta_0^s)^2\right)\tilde QK'\left(\frac{Q^{\top}\psi^s_0}{\sigma_n}\right)\right] \notag \\
     & =  \frac{1}{\sigma_n}\bbE\left[\left( -2\delta_0^s XX^{\top}(\beta_0 - \beta^s_0) -2\delta_0^s XX^{\top}(\delta_0 - \delta_0^s)
     \mathds{1}_{Q^{\top}\delta_0 > 0} \right. \right. \notag \\
     & \hspace{10em} \left. \left. + (X_i^{\top}\delta_0^s)^2\left(1 - 2\mathds{1}_{Q^{\top}\delta_0 > 0}\right)\right)\tilde QK'\left(\frac{Q^{\top}\psi^s_0}{\sigma_n}\right)\right] \notag \\
    & =  \frac{-2}{\sigma_n}\bbE\left[\left(\delta_0^{s^{\top}} g(Q)(\beta_0 - \beta^s_0)\right)\tilde QK'\left(\frac{Q^{\top}\psi^s_0}{\sigma_n}\right)\right] \notag \\
    & \qquad \qquad \qquad -  \frac{2}{\sigma_n} \bbE\left[\left(\delta_0^{s^{\top}}g(Q)(\delta_0 - \delta^s_0)\right)\tilde QK'\left(\frac{Q^{\top}\psi^s_0}{\sigma_n}\right)\mathds{1}_{Q^{\top}\delta_0 > 0}\right] \notag \\
    & \hspace{15em} +   \frac{1}{\sigma_n}\bbE\left[\left(\delta_0^{s^{\top}}g(Q)\delta^s_0\right)\tilde QK'\left(\frac{Q^{\top}\psi^s_0}{\sigma_n}\right)\left(1 - 2\mathds{1}_{Q^{\top}\delta_0 > 0}\right)\right] \notag \\
     & = -\underbrace{\frac{2}{\sigma_n}\bbE\left[\left(\delta_0^{s^{\top}} g(Q)(\beta_0 - \beta^s_0)\right)\tilde QK'\left(\frac{Q^{\top}\psi^s_0}{\sigma_n}\right)\right]}_{T_1} \notag \\
     & \qquad \qquad -\underbrace{\frac{2}{\sigma_n} \bbE\left[\left(\delta_0^{s^{\top}}g(Q)(\delta_0 - \delta^s_0)\right)\tilde QK'\left(\frac{Q^{\top}\psi^s_0}{\sigma_n}\right)\mathds{1}_{Q^{\top}\delta_0 > 0}\right]}_{T_2} \notag \\
     \label{eq:pop_est_conv_1} & \qquad \qquad \qquad  + \underbrace{\frac{1}{\sigma_n}\bbE\left[\left(\delta_0{\top}g(Q)\delta_0\right)\tilde QK'\left(\frac{Q^{\top}\psi^s_0}{\sigma_n}\right)\left(1 - 2\mathds{1}_{Q^{\top}\delta_0 > 0}\right)\right]}_{T_3} \notag \\
     & \qquad \qquad \qquad \qquad  + \underbrace{\frac{2}{\sigma_n}\bbE\left[\left((\delta_0 - \delta_0^s)^{\top}g(Q)\delta_0\right)\tilde QK'\left(\frac{Q^{\top}\psi^s_0}{\sigma_n}\right)\left(1 - 2\mathds{1}_{Q^{\top}\delta_0 > 0}\right)\right]}_{T_4} \notag \\
 & = T_1 + T_2 + T_3 + T_4
 \end{align}
As mentioned earlier, there is a bijection between $(Q_1, \tilde Q)$ and $(Q^{\top}\psi_0, \tilde Q)$. The map of one side is obvious. The other side is also trivial as the first coordinate of $\psi_0$ is 1, which makes $Q^{\top}\psi_0 = Q_1 + \tilde Q^{\top}\tilde \psi_0$:  
$$
(Q^{\top}\psi_0, \tilde Q) \mapsto (Q^{\top}\psi_0 - \tilde Q^{\top}\tilde \psi_0, \tilde Q) \,.
$$
We first show that $T_1, T_2$ and $T_4$ are $o(1)$. Towards that end first note that: 
\begin{align*}
|T_1| & \le \frac{2}{\sigma_n}\bbE\left[\|g(Q)\|_{op} \ \|\tilde Q\| \ \left|K'\left(\frac{Q^{\top}\psi^s_0}{\sigma_n}\right)\right|\right]\|\delta_0^s\|\|\beta_0 - \beta_0^s\| \\
|T_2| & \le \frac{2}{\sigma_n} \bbE\left[\|g(Q)\|_{op} \ \|\tilde Q\| \ \left|K'\left(\frac{Q^{\top}\psi^s_0}{\sigma_n}\right)\right|\right]\|\delta_0^s\|\|\delta_0 - \delta_0^s\| \\
|T_4| & \le \frac{2}{\sigma_n} \bbE\left[\|g(Q)\|_{op} \ \|\tilde Q\| \ \left|K'\left(\frac{Q^{\top}\psi^s_0}{\sigma_n}\right)\right|\right]\|\delta_0^s\|\|\delta_0 - \delta_0^s\|
\end{align*}
From the above bounds, it is immediate that to show that above terms are $o(1)$ all we need to show is: 
$$
 \frac{1}{\sigma_n}\bbE\left[\|g(Q)\|_{op} \ \|\tilde Q\| \ \left|K'\left(\frac{Q^{\top}\psi^s_0}{\sigma_n}\right)\right|\right] = O(1) \,.
$$
Towards that direction, define $\eta = (\tilde \psi_0^s - \tilde \psi_0)/\sigma_n$: 
\begin{align*}
& \frac{1}{\sigma_n}\bbE\left[\|g(Q)\|_{op} \ \|\tilde Q\| \ \left|K'\left(\frac{Q^{\top}\psi^s_0}{\sigma_n}\right)\right|\right] \\
& \le c_+  \frac{1}{\sigma_n}\bbE\left[\|\tilde Q\| \ \left|K'\left(\frac{Q^{\top}\psi^s_0}{\sigma_n}\right)\right|\right]  \\
& = c_+ \frac{1}{\sigma_n}\int \int \|\tilde q\| \left|K'\left(\frac{t}{\sigma_n} + \tilde q^{\top}\eta \right)\right| f_0\left(t \mid \tilde q\right) f(\tilde q) \ dt \ d\tilde q \\
& = c_+ \int \int \|\tilde q\| \left|K'\left(t + \tilde q^{\top}\eta \right)\right| f_0\left(\sigma_n t \mid \tilde q\right) f(\tilde q) \ dt \ d\tilde q \\
& =  c_+ \int  \|\tilde q\| f_0\left(0 \mid \tilde q\right) \int \left|K'\left(t + \tilde q^{\top}\eta \right)\right| \ dt  \ f(\tilde q) \ d\tilde q + R_1 \\
& = c_+ \int \left|K'\left(t\right)\right| dt \ \bbE\left[\|\tilde Q\| f_0(0 \mid \tilde Q)\right] + R_1 = O(1) + R_1 \,.
\end{align*}
Therefore, all it remains to show is $R_1$ is also $O(1)$ (or of smaller order): 
\begin{align*}
|R_1| & = \left|c_+ \int \int \|\tilde q\| \left|K'\left(t + \tilde q^{\top}\eta \right)\right| \left(f_0\left(\sigma_n t \mid \tilde q\right) - f_0(0 \mid \tilde q) \right)f(\tilde q) \ dt \ d\tilde q\right| \\
& \le c_+ F_+ \sigma_n \int \|\tilde q\| \int_{-\infty}^{\infty} |t|\left|K'\left(t + \tilde q^{\top}\eta \right)\right| \ dt \ f(\tilde q) \ d\tilde q \\
& = c_+ F_+ \sigma_n \int \|\tilde q\| \int_{-\infty}^{\infty} |t - q^{\top}\eta|\left|K'\left(t\right)\right| \ dt \ f(\tilde q) \ d\tilde q \\
& \le c_+ F_+ \sigma_n \left[\int \|\tilde q\| \int_{-\infty}^{\infty} |t|\left|K'\left(t\right)\right| \ dt \ f(\tilde q) \ d\tilde q  + \int \|\tilde q\|^2\|\eta\| \int_{-\infty}^{\infty}\left|K'\left(t\right)\right| \ dt \ f(\tilde q) \ d\tilde q\right] \\
& = c_+ F_+ \sigma_n \left[\left(\int_{-\infty}^{\infty} |t|\left|K'\left(t\right)\right| \ dt\right) \times \bbE[\|\tilde Q\|] +  \left(\int_{-\infty}^{\infty}\left|K'\left(t\right)\right| \ dt\right) \times \|\eta\| \ \bbE[\|\tilde Q\|^2]\right] \\
& = O(\sigma_n) = o(1) \,.
\end{align*}
This completes the proof.  For $T_3$, the limit is non-degenerate which can be calculated as follows: 
\begin{align*}
T_3 &= \frac{1}{\sigma_n}\bbE\left[\left(\delta_0{\top}g(Q)\delta_0\right)\tilde QK'\left(\frac{Q^{\top}\psi^s_0}{\sigma_n}\right)\left(1 - 2\mathds{1}_{Q^{\top}\delta_0 > 0}\right)\right] \\
& =  \frac{1}{\sigma_n} \int \int \left(\delta_0{\top}g(t - \tilde q^{\top}\tilde \psi_0, \tilde q)\delta_0\right)\tilde q K'\left(\frac{t}{\sigma_n} + \tilde q^{\top} \eta\right)\left(1 - 2\mathds{1}_{t > 0}\right) \ f_0(t \mid \tilde q) \ f(\tilde q) \ dt \ d\tilde q \\
& = \int \int \left(\delta_0{\top}g(\sigma_n t - \tilde q^{\top}\tilde \psi_0, \tilde q)\delta_0\right)\tilde q K'\left(t + \tilde q^{\top} \eta\right)\left(1 - 2\mathds{1}_{t > 0}\right) \ f_0(\sigma_n t \mid \tilde q) \ f(\tilde q) \ dt \ d\tilde q \\
& = \int \int \left(\delta_0{\top}g(- \tilde q^{\top}\tilde \psi_0, \tilde q)\delta_0\right)\tilde q K'\left(t + \tilde q^{\top} \eta\right)\left(1 - 2\mathds{1}_{t > 0}\right) \ f_0(0 \mid \tilde q) \ f(\tilde q) \ dt \ d\tilde q + R \\
& = \int  \left(\delta_0{\top}g(- \tilde q^{\top}\tilde \psi_0, \tilde q)\delta_0\right)\tilde q f_0(0 \mid \tilde q) \left[\int_{-\infty}^0 K'\left(t + \tilde q^{\top} \eta\right) \ dt - \int_0^\infty K'\left(t + \tilde q^{\top}\tilde \eta\right) \ dt \right] \ f(\tilde q) \ d\tilde q + R \\
&=  \int  \left(\delta_0{\top}g(- \tilde q^{\top}\tilde \psi_0, \tilde q)\delta_0\right)\tilde q f_0(0 \mid \tilde q)\left(2K\left(\tilde q^{\top}\eta\right) - 1\right) \ f(\tilde q) \ d\tilde q + R  \\
& =   \bbE\left[\tilde Q f(0 \mid \tilde Q) \left(\delta_0^{\top}g(- \tilde Q^{\top}\tilde \psi_0, \tilde Q)\delta_0\right)\left(2K(\tilde Q^{\top} \eta)- 1\right)\right] + R 
\end{align*} 
That the remainder $R$ is $o(1)$ again follows by similar calculation as before and hence skipped. Therefore we have when $\eta = (\tilde \psi_0^s - \psi_0)/\sigma_n \to h$: 
$$
T_3 \overset{n \to \infty}{\longrightarrow}  \bbE\left[\tilde Q f(0 \mid \tilde Q) \left(\delta_0^{\top}g(- \tilde Q^{\top}\tilde \psi_0, \tilde Q)\delta_0\right)\left(2K(\tilde Q^{\top}h)- 1\right)\right]  \,,
$$
which along with equation \eqref{eq:pop_est_conv_1} implies: 
$$
\bbE\left[\tilde Q f(0 \mid \tilde Q) \left(\delta_0^{\top}g(- \tilde Q^{\top}\tilde \psi_0, \tilde Q)\delta_0\right)\left(2K(\tilde Q^{\top}h)- 1\right)\right]  = 0 \,.
$$
Taking inner product with respect to $h$ on both side of the above equation we obtain: 
$$
\bbE\left[\tilde Q^{\top}h f(0 \mid \tilde Q) \left(\delta_0^{\top}g(- \tilde Q^{\top}\tilde \psi_0, \tilde Q)\delta_0\right)\left(2K(\tilde Q^{\top}h)- 1\right)\right]  = 0
$$
Now from the symmetry of our Kernel $K$ we have $\left(\delta_0^{\top}g(- \tilde Q^{\top}\tilde \psi_0, \tilde Q)
\delta_0\right)\tilde Q^{\top}h f(0 \mid \tilde Q) (2K(\tilde Q^{\top}\tilde h) - 1) \ge 0$ almost surely. As the expectation is $0$, we further deduce that $\tilde Q^{\top}h f(0 \mid \tilde Q) (2K(\tilde Q^{\top}\tilde h)-1) = 0$ almost surely, which further implies $h = 0$. 
\\\\
\noindent
We next prove that $(\beta_0 - \beta^s_0)/\sqrt{\sigma_n} \to 0$ and $(\delta_0 - \delta^s_0)/\sqrt{\sigma_n} \to 0$ using equations\eqref{eq:beta_grad} and \eqref{eq:delta_grad}. We start with equation \eqref{eq:beta_grad}: 
\begin{align}
    0 & = -\bbE\left[X(Y - X^{\top}\beta_0^s)\right] + \bbE \left\{\left[X_iX_i^{\top}\delta_0^s\right] K\left(\frac{Q_i^{\top}\psi_0^s}{\sigma_n}\right)\right\}  \notag \\
    & = -\bbE\left[XX^{\top}(\beta_0 - \beta_0^s)\right] - \bbE[XX^{\top}\delta_0\mathds{1}_{Q^{\top}\psi_0 > 0}] + \bbE \left[ g(Q)K\left(\frac{Q_i^{\top}\psi_0^s}{\sigma_n}\right)\right]\delta_0^s \notag \\
    & = -\Sigma_X(\beta_0 - \beta_0^s) -\bbE\left[g(Q)\mathds{1}_{Q^{\top}\psi_0 > 0}\right](\delta_0 - \delta_0^s) + \bbE \left[g(Q)\left\{K\left(\frac{Q_i^{\top}\psi_0^s}{\sigma_n}\right) - \mathds{1}_{Q^{\top}\psi_0 > 0}\right\}\right]\delta_0^s \notag \\
     \label{eq:deriv1} & = \Sigma_X\frac{(\beta_0^2 - \beta_0)}{\sigma_n}  + \bbE\left[g(Q)\mathds{1}_{Q^{\top}\psi_0 > 0}\right]\frac{(\delta_0^2 - \delta_0)}{\sigma_n} + \frac{1}{\sigma_n}\bbE \left[g(Q)\left\{K\left(\frac{Q_i^{\top}\psi_0^s}{\sigma_n}\right) - \mathds{1}_{Q^{\top}\psi_0 > 0}\right\}\right]\delta_0^s \notag \\  
     & = \left( \bbE\left[g(Q)\mathds{1}_{Q^{\top}\psi_0 > 0}\right]\right)^{-1}\Sigma_X \frac{(\beta_0^2 - \beta_0)}{\sigma_n}  + \frac{\delta_0^s - \delta_0}{\sigma_n}  \notag \\
     & \qquad \qquad \qquad \qquad +  \left( \bbE\left[g(Q)\mathds{1}_{Q^{\top}\psi_0 > 0}\right]\right)^{-1} \frac{1}{\sigma_n}\bbE \left[g(Q)\left\{K\left(\frac{Q_i^{\top}\psi_0^s}{\sigma_n}\right) - \mathds{1}_{Q^{\top}\psi_0 > 0}\right\}\right]\delta_0^s 
\end{align}
From equation \eqref{eq:delta_grad} we have:
\begin{align}
    0 & = \bbE \left\{\left[-X\left(Y - X^{\top}\beta_0^s\right) + XX^{\top}\delta_0^s\right] K\left(\frac{Q^{\top}\psi_0^s}{\sigma_n}\right)\right\} \notag \\
    & = -\bbE\left[g(Q)K\left(\frac{Q^{\top}\psi_0^s}{\sigma_n}\right)\right](\beta_0 - \beta_0^s) -  \bbE\left[g(Q)K\left(\frac{Q^{\top}\psi_0^s}{\sigma_n}\right)\mathds{1}_{Q^{\top}\psi_0 > 0}\right]\delta_0 \notag \\
    & \hspace{20em}+  \bbE\left[g(Q)K\left(\frac{Q^{\top}\psi_0^s}{\sigma_n}\right)\right]\delta_0^s \notag \\
     & = -\bbE\left[g(Q)K\left(\frac{Q^{\top}\psi_0^s}{\sigma_n}\right)\right](\beta_0 - \beta_0^s) -  \bbE\left[g(Q)K\left(\frac{Q^{\top}\psi_0^s}{\sigma_n}\right)\mathds{1}_{Q^{\top}\psi_0 > 0}\right](\delta_0 - \delta_0^s) \notag \\
     & \hspace{20em} +   \bbE\left[g(Q)K\left(\frac{Q^{\top}\psi_0^s}{\sigma_n}\right)\left(1 - \mathds{1}_{Q^{\top}\psi_0 > 0}\right)\right]\delta_0^s  \notag \\
        & = \bbE\left[g(Q)K\left(\frac{Q^{\top}\psi_0^s}{\sigma_n}\right)\right]\frac{(\beta_0^s - \beta_0)}{\sigma_n} + \bbE\left[g(Q)K\left(\frac{Q^{\top}\psi_0^s}{\sigma_n}\right)\mathds{1}_{Q^{\top}\psi_0 > 0}\right]\frac{(\delta^s_0 - \delta_0)}{\sigma_n} \notag \\
     \label{eq:deriv2} & \hspace{20em} +   \frac{1}{\sigma_n}\bbE\left[g(Q)K\left(\frac{Q^{\top}\psi_0^s}{\sigma_n}\right)\left(1 - \mathds{1}_{Q^{\top}\psi_0 > 0}\right)\right]\delta_0^s  \notag \\ 
     & = \left( \bbE\left[g(Q)K\left(\frac{Q^{\top}\psi_0^s}{\sigma_n}\right)\mathds{1}_{Q^{\top}\psi_0 > 0}\right]\right)^{-1}\bbE\left[g(Q)K\left(\frac{Q^{\top}\psi_0^s}{\sigma_n}\right)\right]\frac{(\beta_0^s - \beta_0)}{\sigma_n} + \frac{(\delta^s_0 - \delta_0)}{\sigma_n} \notag \\
     & \qquad \qquad \qquad + \left( \bbE\left[g(Q)K\left(\frac{Q^{\top}\psi_0^s}{\sigma_n}\right)\mathds{1}_{Q^{\top}\psi_0 > 0}\right]\right)^{-1} \frac{1}{\sigma_n}\bbE\left[g(Q)K\left(\frac{Q^{\top}\psi_0^s}{\sigma_n}\right)\left(1 - \mathds{1}_{Q^{\top}\psi_0 > 0}\right)\right]\delta_0^s 
     \end{align}
Subtracting equation \eqref{eq:deriv2} from \eqref{eq:deriv1} we obtain: 
$$
0 = A_n \frac{(\beta_0^s - \beta_0)}{\sigma_n} + b_n \,,
$$
i.e. 
$$
\lim_{n \to \infty}  \frac{(\beta_0^s - \beta_0)}{\sigma_n} = \lim_{n \to \infty} -A_n^{-1}b_n \,.
$$
where: 
\begin{align*}
A_n & =  \left( \bbE\left[g(Q)\mathds{1}_{Q^{\top}\psi_0 > 0}\right]\right)^{-1}\Sigma_X   \\
& \qquad \qquad  - \left( \bbE\left[g(Q)K\left(\frac{Q^{\top}\psi_0^s}{\sigma_n}\right)\mathds{1}_{Q^{\top}\psi_0 > 0}\right]\right)^{-1}\bbE\left[g(Q)K\left(\frac{Q^{\top}\psi_0^s}{\sigma_n}\right)\right] \\
b_n & =  \left( \bbE\left[g(Q)\mathds{1}_{Q^{\top}\psi_0 > 0}\right]\right)^{-1} \frac{1}{\sigma_n}\bbE \left[g(Q)\left\{K\left(\frac{Q_i^{\top}\psi_0^s}{\sigma_n}\right) - \mathds{1}_{Q^{\top}\psi_0 > 0}\right\}\right]\delta_0^s  \\
& \qquad  - \left( \bbE\left[g(Q)K\left(\frac{Q^{\top}\psi_0^s}{\sigma_n}\right)\mathds{1}_{Q^{\top}\psi_0 > 0}\right]\right)^{-1} \frac{1}{\sigma_n}\bbE\left[g(Q)K\left(\frac{Q^{\top}\psi_0^s}{\sigma_n}\right)\left(1 - \mathds{1}_{Q^{\top}\psi_0 > 0}\right)\right]\delta_0^s 
\end{align*}
It is immediate via DCT that as $n \to \infty$: 
\begin{align}
     \label{eq:limit_3} \bbE\left[g(Q)K\left(\frac{Q^{\top}\psi_0^s}{\sigma_n}\right)\right] & \longrightarrow \bbE\left[g(Q)\mathds{1}_{Q^{\top}\psi_0 > 0}\right] \,. \\
     \label{eq:limit_4} \bbE\left[g(Q)K\left(\frac{Q^{\top}\psi_0^s}{\sigma_n}\right)\mathds{1}_{Q^{\top}\psi_0 > 0}\right] & \longrightarrow \bbE\left[g(Q)\mathds{1}_{Q^{\top}\psi_0 > 0}\right] \,.
\end{align}
From equation \eqref{eq:limit_3} and \eqref{eq:limit_4} it is immediate that: 
\begin{align*}
\lim_{n \to \infty} A_n  & =  \left( \bbE\left[g(Q)\mathds{1}_{Q^{\top}\psi_0 > 0}\right]\right)^{-1}\Sigma_X - I \\
& = \left( \bbE\left[g(Q)\mathds{1}_{Q^{\top}\psi_0 > 0}\right]\right)^{-1}\left( \bbE\left[g(Q)\mathds{1}_{Q^{\top}\psi_0 \le 0}\right]\right) := A\,.
\end{align*}
Next observe that: 
\begin{align}
    & \frac{1}{\sigma_n} \bbE\left[g(Q)\left\{K\left(\frac{Q^{\top}\psi_0^s}{\sigma_n}\right) - \mathds{1}_{Q^{\top}\psi_0 > 0}\right\}\right] \notag \\
    & = \frac{1}{\sigma_n} \bbE\left[g(Q)\left\{K\left(\frac{Q^{\top}\psi_0}{\sigma_n} + \tilde Q^{\top}\tilde \eta\right) - \mathds{1}_{Q^{\top}\psi_0 > 0}\right\}\right] \notag \\
    & = \int_{\bbR^{p-1}} \int_{-\infty}^{\infty} g(\sigma_nt - \tilde q^{\top}\tilde \psi_0, \tilde q)\left[K\left(t + \tilde q^{\top}\tilde \eta\right) - \mathds{1}_{t > 0}\right] f(\sigma_n t \mid \tilde q) \ dt \ f(\tilde q) \ d\tilde q  \notag \\
    \label{eq:limit_1} & \longrightarrow \bbE\left[g(-\tilde Q^{\top}\tilde \psi_0, \tilde Q)f(0 \mid \tilde Q)\right] \cancelto{0}{\int_{-\infty}^{\infty} \left[K\left(t\right) - \mathds{1}_{t > 0}\right] \ dt} \,. 
\end{align}
Similar calculation yields: 
\begin{align}
    \label{eq:limit_2} & \frac{1}{\sigma_n} \bbE\left[g(Q)K\left(\frac{Q^{\top}\psi_0^s}{\sigma_n}\right)\left(1 - \mathds{1}_{Q^{\top}\psi_0 > 0}\right)\right] \notag \\
    & \longrightarrow \bbE[g(-\tilde Q^{\top}\tilde \psi_0, \tilde Q)f_0(0 \mid \tilde Q)]\int_{-\infty}^{\infty} \left[K\left(t\right)\mathds{1}_{t \le 0}\right] \ dt \,.
\end{align}
Combining equation \eqref{eq:limit_1} and \eqref{eq:limit_2} we conclude: 
\begin{align*}
\lim_{n \to \infty} b_n &= \left( \bbE\left[g(Q)\mathds{1}_{Q^{\top}\psi_0 > 0}\right]\right)^{-1} \bbE[g(-\tilde Q^{\top}\tilde \psi_0, \tilde Q)\delta_0f_0(0 \mid \tilde Q)]\int_{-\infty}^{\infty} \left[K\left(t\right)\mathds{1}_{t \le 0}\right] \ dt  \\
& := b \,.
\end{align*}
which further implies, 
$$
\lim_{n \to \infty}  \frac{(\beta_0^s - \beta_0)}{\sigma_n} = -A^{-1}b \implies (\beta_0^s - \beta_0) = o(\sqrt{\sigma_n})\,,
$$
and by similar calculations: 
$$
(\delta_0^s - \delta_0) = o(\sqrt{\sigma_n}) \,.
$$
This completes the proof. 
\end{proof}

\subsection{Proof of Lemma \ref{lem:pop_curv_nonsmooth}}
\begin{proof}
From the definition of $M(\theta)$ it is immediate that $\bbM(\theta_0) = \bbE[\eps^2] = \sigma^2$. For any general $\th$: 
\begin{align*}
    \bbM(\th) & = \bbE\left[\left(Y - X^{\top}\left(\beta + \delta\mathds{1}_{Q^{\top}\psi > 0}\right)\right)^2\right] \\
    & = \sigma^2 + \bbE\left[\left( X^{\top}\left(\beta + \delta\mathds{1}_{Q^{\top}\psi > 0} - \beta_0 - \delta_0\mathds{1}_{Q^{\top}\psi_0 > 0}\right)\right)^2\right] \\
    & \ge \sigma^2 + c_- \bbE_Q\left[\left\|\beta - \beta_0 + \delta\mathds{1}_{Q^{\top}\psi > 0}- \delta_0\mathds{1}_{Q^{\top}\psi_0 > 0} \right\|^2\right]
\end{align*}
This immediately implies: 
$$
\bbM(\th) - \bbM(\th_0) \ge c_- \bbE\left[\left\|\beta - \beta_0 + \delta\mathds{1}_{Q^{\top}\psi > 0}- \delta_0\mathds{1}_{Q^{\top}\psi_0 > 0} \right\|^2\right] \,.
$$

\noindent
For notational simplicity, define $p_{\psi} = \bbP(Q^{\top}\psi > 0)$. Expanding the RHS we have: 
\begin{align}
    & \bbE\left[\left\|\beta - \beta_0 + \delta\mathds{1}_{Q^{\top}\psi > 0}- \delta_0\mathds{1}_{Q^{\top}\psi_0 > 0} \right\|^2\right] \notag \\
    & = \|\beta - \beta_0\|^2 +  2(\beta - \beta_0)^{\top}\bbE\left[\delta\mathds{1}_{Q^{\top}\psi > 0}- \delta_0\mathds{1}_{Q^{\top}\psi_0 > 0}\right] + \bbE\left[\left\|\delta\mathds{1}_{Q^{\top}\psi > 0}- \delta_0\mathds{1}_{Q^{\top}\psi_0 > 0}\right\|^2\right] \notag  \\
     & = \|\beta - \beta_0\|^2 +  2(\beta - \beta_0)^{\top}\bbE\left[\delta\mathds{1}_{Q^{\top}\psi > 0}-\delta\mathds{1}_{Q^{\top}\psi_0 > 0} + \delta\mathds{1}_{Q^{\top}\psi_0 > 0} - \delta_0\mathds{1}_{Q^{\top}\psi_0 > 0}\right] \notag  \\
     & \qquad \qquad \qquad \qquad \qquad+ \bbE\left[\left\|\delta\mathds{1}_{Q^{\top}\psi > 0}-\delta\mathds{1}_{Q^{\top}\psi_0 > 0} + \delta\mathds{1}_{Q^{\top}\psi_0 > 0} - \delta_0\mathds{1}_{Q^{\top}\psi_0 > 0}\right\|^2\right] \notag  \\
    & = \|\beta - \beta_0\|^2 + 2(\beta - \beta_0)^{\top}(\delta - \delta_0)p_{\psi_0} +  \|\delta - \delta_0\|^2 p_{\psi_0} \notag  \\
    & \qquad \qquad \qquad +  2(\beta - \beta_0)^{\top}\delta\left(p_{\psi} - p_{\psi_0}\right) + \|\delta\|^2 \bbP\left(\s(Q^{\top}\psi) \neq \s(Q^{\top}\psi_0)\right)  \notag  \\
    \label{eq:nsb1} & \qquad \qquad \qquad \qquad \qquad - 2\delta^{\top}(\delta - \delta_0)\bbP\left(Q^{\top}\psi_0 > 0, Q^{\top}\psi < 0\right) 
    \end{align}
Using the fact that $2ab \ge (a^2/c) + cb^2$ for any constant $c$ we have: 
\begin{align*}
& \|\beta - \beta_0\|^2 + 2(\beta - \beta_0)^{\top}(\delta - \delta_0)p_{\psi_0} +  \|\delta - \delta_0\|^2 p_{\psi_0} \\
& \ge \|\beta - \beta_0\|^2  + \|\delta - \delta_0\|^2 p_{\psi_0}  -  \frac{\|\beta - \beta_0\|^2 p_{\psi_0}}{c} - c \|\delta - \delta_0\|^2 p_{\psi_0} \\
& = \|\beta - \beta_0\|^2\left(1 - \frac{p_{\psi_0}}{c}\right) +  \|\delta - \delta_0\|^2  p_{\psi_0} (1 - c) \,.
\end{align*}
for any $c$. To make the RHS non-negative we pick $p_{\psi_0} < c < 1$ and concludes that: 
\begin{equation}
\label{eq:nsb2}
 \|\beta - \beta_0\|^2 + 2(\beta - \beta_0)^{\top}(\delta - \delta_0)p_{\psi_0} +  \|\delta - \delta_0\|^2 p_{\psi_0} \gtrsim \left( \|\beta - \beta_0\|^2 + \|\delta - \delta_0\|^2\right) \,.
\end{equation}
For the last 3 summands of RHS of equation \eqref{eq:nsb1}:     
\begin{align}
& 2(\beta - \beta_0)^{\top}\delta\left(p_{\psi} - p_{\psi_0}\right) + \|\delta\|^2 \bbP\left(\s(Q^{\top}\psi) \neq \s(Q^{\top}\psi_0)\right)  \notag  \\
 & \qquad \qquad - 2\delta^{\top}(\delta - \delta_0)\bbP\left(Q^{\top}\psi_0 > 0, Q^{\top}\psi < 0\right) \notag \\
 & = 2(\beta - \beta_0)^{\top}\delta \bbP\left(Q^{\top}\psi > 0, Q^{\top}\psi_0 < 0\right) - 2(\beta - \beta_0)^{\top}\delta \bbP\left(Q^{\top}\psi < 0, Q^{\top}\psi_0 > 0\right) \notag \\
  & \qquad \qquad + |\delta\|^2 \bbP\left(\s(Q^{\top}\psi) \neq \s(Q^{\top}\psi_0)\right) - 2\delta^{\top}(\delta - \delta_0)\bbP\left(Q^{\top}\psi_0 > 0, Q^{\top}\psi < 0\right) \notag \\
  & = \left[\|\delta\|^2 - 2(\beta - \beta_0)^{\top}\delta - 2\delta^{\top}(\delta - \delta_0)\right]\bbP\left(Q^{\top}\psi_0 > 0, Q^{\top}\psi < 0\right) \notag \\
  & \qquad \qquad \qquad \qquad \qquad \qquad + \left[\|\delta\|^2 + 2(\beta - \beta_0)^{\top}\delta\right]\bbP\left(Q^{\top}\psi > 0, Q^{\top}\psi_0 < 0\right) \notag \\
    & = \left[\|\delta_0\|^2 - 2(\beta - \beta_0)^{\top}(\delta - \delta_0) -  2(\beta - \beta_0)^{\top}\delta_0 - \|\delta - \delta_0\|^2\right]\bbP\left(Q^{\top}\psi_0 > 0, Q^{\top}\psi < 0\right) \notag \\
  & \qquad   + \left[\|\delta_0\|^2 + \|\delta - \delta_0\|^2 + 2(\delta - \delta_0)^{\top}\delta_0 + 2(\beta - \beta_0)^{\top}(\delta - \delta_0) +  2(\beta - \beta_0)^{\top}\delta_0\right]\bbP\left(Q^{\top}\psi > 0, Q^{\top}\psi_0 < 0\right) \notag \\
  & \ge  \left[\|\delta_0\|^2 - 2\|\beta - \beta_0\|\|\delta - \delta_0\| -  2\|\beta - \beta_0\|\|\delta_0\| - \|\delta - \delta_0\|^2\right]\bbP\left(Q^{\top}\psi_0 > 0, Q^{\top}\psi < 0\right) \notag \\
  & \qquad   + \left[\|\delta_0\|^2 + \|\delta - \delta_0\|^2 + 2\|\delta - \delta_0\|\|\delta_0\| + 2\|\beta - \beta_0\|\|\delta - \delta_0\| +  2\|\beta - \beta_0\|\|\delta_0\|\right]\bbP\left(Q^{\top}\psi > 0, Q^{\top}\psi_0 < 0\right) \notag \\
\label{eq:nsb3} & \gtrsim \|\delta_0\|^2 \bbP\left(\s(Q^{\top}\psi) \neq \s(Q^{\top}\psi_0)\right)   \gtrsim \|\psi - \psi_0\| \hspace{0.2in} [\text{By Assumption }\ref{eq:assm}]\,.
\end{align}
Combining equation \eqref{eq:nsb2} and \eqref{eq:nsb3} we complete the proof of lower bound. The upper bound is relatively easier: note that by our previous calculation: 
\begin{align*}
    \bbM(\theta) - \bbM(\th_0) & =  \bbE\left[\left( X^{\top}\left(\beta + \delta\mathds{1}_{Q^{\top}\psi > 0} - \beta_0 - \delta_0\mathds{1}_{Q^{\top}\psi_0 > 0}\right)\right)^2\right] \\
    & \le  c_+\bbE\left[\left\|\beta - \beta_0 + \delta\mathds{1}_{Q^{\top}\psi > 0}- \delta_0\mathds{1}_{Q^{\top}\psi_0 > 0} \right\|^2\right] \\
    & = c_+\bbE\left[\left\|\beta - \beta_0 + \delta\mathds{1}_{Q^{\top}\psi > 0}- \delta\mathds{1}_{Q^{\top}\psi_0 > 0} + \delta\mathds{1}_{Q^{\top}\psi_0 > 0} - \delta_0\mathds{1}_{Q^{\top}\psi_0 > 0} \right\|^2\right] \\
    & \lesssim \left[\|\beta - \beta_0\|^2 + \|\delta - \delta_0\|^2 + \bbP\left(\s(Q^{\top}\psi) \neq \s(Q^{\top}\psi_0)\right)\right] \\
    & \lesssim \left[\|\beta - \beta_0\|^2 + \|\delta - \delta_0\|^2 + \|\psi - \psi_0\|\right] \,.
\end{align*}
This completes the entire proof.
\end{proof}

\subsection{Proof of Lemma \ref{lem:uniform_smooth}}
\begin{proof}
The difference of the two losses: 
\begin{align*}
\left|\bbM^s(\theta) - \bbM(\theta)\right| & = \left|\bbE\left[\left\{-2\left(Y_i - X^{\top}\beta\right)X^{\top}\delta + (X^{\top}\delta)^2\right\}\left(K\left(\frac{Q^{\top}\psi}{\sigma_n}\right) - \mathds{1}_{Q^{\top}\psi > 0}\right)\right]\right| \\
& \le \bbE\left[\left|-2\left(Y_i - X^{\top}\beta\right)X^{\top}\delta + (X^{\top}\delta)^2\right|\left|K\left(\frac{Q^{\top}\psi}{\sigma_n}\right) - \mathds{1}_{Q^{\top}\psi > 0}\right|\right] \\
& := \bbE\left[m(Q)\left|K\left(\frac{Q^{\top}\psi}{\sigma_n}\right) - \mathds{1}_{Q^{\top}\psi > 0}\right|\right] 
\end{align*}
where $m(Q) = \bbE\left[\left|-2\left(Y_i - X^{\top}\beta\right)X^{\top}\delta + (X^{\top}\delta)^2\right| \mid Q\right]$. This function can be bounded as follows: 
\begin{align*}
m(Q) & = \bbE\left[\left|-2\left(Y_i - X^{\top}\beta\right)X^{\top}\delta + (X^{\top}\delta)^2\right| \mid Q\right] \\
& \le \bbE[ (X^{\top}\delta)^2 \mid Q] + 2\bbE\left[\left|(\beta - \beta_0)^{\top}XX^{\top}\delta\right|\right] + 2\bbE\left[\left|\delta_0^{\top}XX^{\top}\delta\right|\right] \\
& \le c_+\left(\|\delta\|^2 + 2\|\beta - \beta_0\|\|\delta\| + 2\|\delta\|\|\delta_0\|\right) \lesssim 1 \,,
\end{align*}
as our parameter space is compact. For the rest of the calculation define $\eta = (\tilde \psi - \tilde \psi_0)/\sigma_n$. The definition of $\eta$ may be changed from proof to proof, but it will be clear from the context. Therefore we have: 
\begin{align*}
\left|\bbM^s(\theta) - \bbM(\theta)\right| & \lesssim \bbE\left[\left|K\left(\frac{Q^{\top}\psi}{\sigma_n}\right) - \mathds{1}_{Q^{\top}\psi > 0}\right|\right]  \\
& =  \bbE\left[\left| \mathds{1}\left(\frac{Q^{\top}\psi_0}{\sigma_n} + \eta^{\top}\tilde{Q} \ge 0\right) - K\left(\frac{Q^{\top}\psi_0}{\sigma_n} + \eta^{\top}\tilde{Q}\right)\right|\right]  \\
& = \sigma_n \int_{\mathbb{R}^{p-1}} \int_{-\infty}^{\infty}  \left | \mathds{1}\left(t  \ge 0\right) - K\left(t \right)\right | f_0(\sigma_n (t-\eta^{\top}\tilde{q}) | \tilde{q}) \ dt \ dP(\tilde{q}) \\
& \le f_+  \sigma_n  \int_{-\infty}^{\infty}  \left | \mathds{1}\left(t  \ge 0\right) - K\left(t \right)\right | \ dt \lesssim \sigma_n \,.
\end{align*}
where the integral over $t$ is finite follows from the definition of the kernel. This completes the proof. 
\end{proof}

\subsection{Proof of Lemma \ref{lem:pop_smooth_curvarture}}
\begin{proof}
First note that we can write: 
\begin{align}
    & \bbM^s(\th) - \bbM^s(\th_0^s) \notag \\
    & = \underbrace{\bbM^s(\th) - \bbM(\th)}_{\ge -K_1\sigma_n} + \underbrace{\bbM(\th) - \bbM(\th_0)}_{\underbrace{\ge u_- d^2(\theta, \theta_0)}_{\ge \frac{u_-}{2} d^2(\th, \th_0^s) - u_-\sigma_n }} + \underbrace{\bbM(\th_0) - \bbM(\th_0^s)}_{\ge - u_+ d^2(\theta_0, \th_0^s) \ge - u_+\sigma_n} + \underbrace{\bbM(\th_0^s) - \bbM^s(\th_0^s)}_{\ge - K_1 \sigma_n} \notag \\
    & \ge \frac{u_-}{2}d^2(\th, \th_0^s) - (2K_1 + \xi)\sigma_n \notag \\
    & \ge \frac{u_-}{2}\left[\|\beta - \beta^s_0\|^2 + \|\delta - \delta^s_0\|^2 + \|\psi - \psi^s_0\|\right] - (2K_1 + \xi)\sigma_n \notag \\ 
    & \ge \left[\frac{u_-}{2}\left(\|\beta - \beta^s_0\|^2 + \|\delta - \delta^s_0\|^2\right) + \frac{u_-}{4}\|\psi - \psi^s_0\|\right]\mathds{1}_{\|\psi - \psi^s_0\| > \frac{4(2K_1 + \xi)}{u_-}\sigma_n} \notag \\
    \label{eq:lower_curv_smooth} & \gtrsim  \left[\|\beta - \beta^s_0\|^2 + \|\delta - \delta^s_0\|^2 + \|\psi - \psi^s_0\|\right]\mathds{1}_{\|\psi - \psi^s_0\| > \frac{4(2K_1 + \xi)}{u_-}\sigma_n}
\end{align}
where $\xi$ can be taken as close to $0$ as possible. Henceforth we set $\cK = 4(2K_1 + \xi)/u_-$. For the other part of the curvature (i.e. when $\|\psi - \psi_0^s\| \le \cK \sigma_n$) we start with a two step Taylor expansion of the smoothed loss function: 
\begin{align*}
    \bbM^s(\th) - \bbM^s(\th_0^s) = \frac12 (\th_0 - \th^0_s)^{\top}\nabla^2 \bbM^s(\th^*)(\th_0 - \th^0_s) 
\end{align*}
Recall the definition of $\bbM^s(\th)$: 
$$
\bbM^s_n(\theta) = \bbE\left(Y - X^{\top}\beta\right)^2 + \bbE \left\{\left[-2\left(Y_i - X_i^{\top}\beta\right)X_i^{\top}\delta + (X_i^{\top}\delta)^2\right] K\left(\frac{Q_i^{\top}\psi}{\sigma_n}\right)\right\}
$$
%
The partial derivates of $\bbM^s(\th)$ with respect to $(\beta, \delta, \psi)$ was derived in equation \eqref{eq:beta_grad} - \eqref{eq:psi_grad}. From there, we calculate the hessian of $\bbM^s(\th)$: 
\begin{align*}
    \nabla_{\beta\beta}\bbM^s(\th) & = 2\Sigma_X \\
    \nabla_{\delta\delta}\bbM^s(\th) & = 2 \bbE\left[g(Q)K\left(\frac{Q_i^{\top}\psi}{\sigma_n}\right)\right] = 2 \bbE\left[g(Q)K\left(\frac{Q_i^{\top}\psi_0 }{\sigma_n} + \tilde Q^{\top}\tilde \eta\right)\right] \\
    \nabla_{\psi\psi} \bbM^s(\th) & = \frac{1}{\sigma_n^2}\bbE \left\{\left[-2\left(Y_i - X_i^{\top}\beta\right)X_i^{\top}\delta + (X_i^{\top}\delta)^2\right]\tilde Q_i\tilde Q_i^{\top} K''\left(\frac{Q_i^{\top}\psi_0 }{\sigma_n} + \tilde Q^{\top}\tilde \eta\right)\right\} \\
    \nabla_{\beta \delta}\bbM^s(\th) & = 2 \bbE\left[g(Q)K\left(\frac{Q_i^{\top}\psi}{\sigma_n}\right)\right] = 2 \bbE\left[g(Q)K\left(\frac{Q_i^{\top}\psi_0 }{\sigma_n} + \tilde Q^{\top}\tilde \eta\right)\right] \\
    \nabla_{\beta \psi}\bbM^s(\th) & = \frac{2}{\sigma_n}\bbE\left(g(Q)\delta\tilde Q^{\top}K'\left(\frac{Q_i^{\top}\psi_0 }{\sigma_n} + \tilde Q^{\top}\tilde \eta\right)\right)  \\
    \nabla_{\delta \psi} \bbM^s(\th) & = \frac{2}{\sigma_n}\bbE \left\{\left[-X_i\left(Y_i - X_i^{\top}\beta\right) + X_iX_i^{\top}\delta\right]\tilde Q_i^{\top} K'\left(\frac{Q_i^{\top}\psi_0 }{\sigma_n} + \tilde Q^{\top}\tilde \eta\right)\right\} \,.
\end{align*}
where we use $\tilde \eta$ for a generic notation for $(\tilde \psi - \tilde \psi_0)/\sigma_n$. For notational simplicity, we define $\gamma = (\beta, \delta)$ and $\nabla^2\bbM^{s, \gamma}(\th)$, $\nabla^2\bbM^{s, \gamma \psi}(\th), \nabla^2\bbM^{s, \psi \psi}(\th)$ to be corresponding blocks of the hessian matrix. We have: 
\begin{align}
      \bbM^s(\th) - \bbM^s(\th_0^s) & = \frac12 (\th - \th^0_s)^{\top}\nabla^2 \bbM^s(\th^*)(\th - \th^0_s)  \notag \\
      & = \frac12 (\gamma - \gamma_0^s)^{\top}\nabla^2 \bbM^{s, \gamma}(\th^*)(\gamma - \gamma^0_s) + (\gamma - \gamma_0^s)^{\top}\nabla^2 \bbM^{s, \gamma \psi}(\th^*)(\psi - \psi^0_s) \notag \\
      & \qquad \qquad \qquad \qquad + \frac12(\psi - \psi_0^s)^{\top}\nabla^2 \bbM^{s, \psi \psi}(\th^*)(\psi - \psi^0_s)  \notag \\
      \label{eq:hessian_1} & := \frac12 \left(T_1 + 2T_2 + T_3\right)
\end{align}
Note that we can write: 
\begin{align*}
    T_1 & = (\gamma - \gamma_0^s)^{\top}\nabla^2 \bbM^{s, \gamma}(\tilde \th)(\gamma - \gamma^0_s) \\
    & = (\gamma - \gamma_0^s)^{\top}\nabla^2 \bbM^{s, \gamma}(\th_0)(\gamma - \gamma^0_s)  + (\gamma - \gamma_0^s)^{\top}\left[\nabla^2 \bbM^{s, \gamma}(\tilde \th) - \nabla^2 \bbM^{s, \gamma}(\th_0)\right](\gamma - \gamma^0_s) 
\end{align*}
The operator norm of the difference of two hessians can be bounded as: 
$$
\left\|\nabla^2 \bbM^{s, \gamma}(\th^*) - \nabla^2 \bbM^{s, \gamma}(\th_0)\right\|_{op}  = O(\sigma_n) \,.
$$
for any $\th^*$ in a neighborhood of $\theta_0^s$ with $\|\psi - \psi_0^s\| \le \cK \sigma_n$. To prove this note that for any $\theta$: 
$$
\nabla^2 \bbM^{s, \gamma}(\th^*) - \nabla^2 \bbM^{s, \gamma}(\th_0) = 2\begin{pmatrix}0 & A \\
A & A\end{pmatrix} = \begin{pmatrix}0 & 1 \\ 1 & 1\end{pmatrix} \otimes A 
$$
where: 
$$
A = \bbE\left[g(Q)K\left(\frac{Q_i^{\top}\psi}{\sigma_n}\right)\right] - \bbE\left[g(Q)K\left(\frac{Q_i^{\top}\psi_0 }{\sigma_n}\right)\right]
$$
Therefore it is enough to show $\|A\|_{op} = O(\sigma_n)$. Towards that direction: 
\begin{align*}
A & = \bbE\left[g(Q)K\left(\frac{Q_i^{\top}\psi}{\sigma_n}\right)\right] - \bbE\left[g(Q)K\left(\frac{Q_i^{\top}\psi_0 }{\sigma_n}\right)\right] \\
& = \sigma_n \int \int g(\sigma_n t - \tilde q^{\top}\tilde \psi_0)\left(K(t + \tilde q^{\top}\eta) - K(t) \right) f_0(\sigma_n t \mid \tilde q) \ f(\tilde q) \ dt  \ d\tilde q \\
& = \sigma_n \left[\int \int g(- \tilde q^{\top}\tilde \psi_0)\left(K(t + \tilde q^{\top}\eta) - K(t) \right) f_0(0 \mid \tilde q) \ f(\tilde q) \ dt  \ d\tilde q + R \right] \\
& = \sigma_n \left[\int \int g(- \tilde q^{\top}\tilde \psi_0)f_0(0 \mid \tilde q) \int_t^{t + \tilde q^{\top}\eta}K'(s) \ ds \ f(\tilde q) \ dt  \ d\tilde q + R \right] \\
& = \sigma_n \left[\int g(- \tilde q^{\top}\tilde \psi_0)f_0(0 \mid \tilde q) \int_{-\infty}^{\infty}K'(s) \int_{s-\tilde q^{\top}\eta}^s \ dt \ ds \ f(\tilde q)\ d\tilde q + R \right] \\
& = \sigma_n \left[\int g(- \tilde q^{\top}\tilde \psi_0)f_0(0 \mid \tilde q)\tilde q^{\top}\eta  \ f(\tilde q)\ d\tilde q + R \right] \\
& = \sigma_n \left[\bbE\left[g(- \tilde Q^{\top}\tilde \psi_0, \tilde Q)f_0(0 \mid \tilde Q)\tilde Q^{\top}\eta\right] + R \right]
\end{align*}
using the fact that $\left\|\bbE\left[g(- \tilde Q^{\top}\tilde \psi_0, \tilde Q)f_0(0 \mid \tilde Q)\tilde Q^{\top}\eta\right]\right\|_{op} = O(1)$ and $\|R\|_{op} = O(\sigma_n)$ we conclude the claim. From the above claim we conclude: 
\begin{equation}
    \label{eq:hessian_gamma}
    T_1 = (\gamma - \gamma_0^s)^{\top}\nabla^2 \bbM^{s, \gamma}(\th^*)(\gamma - \gamma^s_0) \ge \|\gamma - \gamma^s_0\|^2(1 - O(\sigma_n)) \ge \frac12 \|\gamma - \gamma_0^s\|^2
\end{equation}
for all large $n$. 
\\\\
\noindent 
We next deal with the cross term $T_2$ in equation \eqref{eq:hessian_1}. Towards that end first note that: 
\begin{align*}
    & \frac{1}{\sigma_n}\bbE\left((g(Q)\delta)\tilde Q^{\top}K'\left(\frac{Q_i^{\top}\psi_0 }{\sigma_n} + \tilde Q^{\top}\eta^*\right)\right) \\
    & = \int_{\bbR^{(p-1)}}\left[ \int_{-\infty}^{\infty} \left(g\left(\sigma_nt - \tilde q^{\top}\tilde \psi_0, \tilde q\right)\delta\right) K'\left(t + \tilde q^{\top}\eta^*\right) f_0(\sigma_n t \mid \tilde q) \ dt\right] \tilde q^{\top} \ f(\tilde q) \ d\tilde q  \\
     & = \int_{\bbR^{(p-1)}}\left[ \int_{-\infty}^{\infty} \left(g\left(- \tilde q^{\top}\tilde \psi_0, \tilde q\right)\delta\right) K'\left(t + \tilde q^{\top}\eta^*\right) f_0(0 \mid \tilde q) \ dt\right] \tilde q^{\top} \ f(\tilde q) \ d\tilde q  + R_1\\
    & = \bbE\left[\left(g\left( - \tilde Q^{\top}\tilde \psi_0, \tilde Q\right)\delta\right)\tilde Q^{\top}f_0(0 \mid \tilde Q)\right] + R_1
\end{align*}
where the remainder term $R_1$ can be further decomposed $R_1 = R_{11} + R_{12} + R_{13}$ with: 
\begin{align*}
    \left\|R_{11}\right\|  & = \left\|\int_{\bbR^{(p-1)}}\left[ \int_{-\infty}^{\infty} \left(g\left(- \tilde q^{\top}\tilde \psi_0, \tilde q\right)\delta\right) K'\left(t + \tilde q^{\top}\eta^*\right) (f_0(\sigma_nt\mid \tilde q) -  f_0(0 \mid \tilde q)) \ dt\right] \tilde q^{\top} \ f(\tilde q) \ d\tilde q\right\| \\
    & \le \left\|\int_{\bbR^{(p-1)}}\left[ \int_{-\infty}^{\infty} \left\|g\left(- \tilde q^{\top}\tilde \psi_0, \tilde q\right)\right\|_{op}\|\delta\| \left|K'\left(t + \tilde q^{\top}\eta^*\right)\right| \left|f_0(\sigma_nt\mid \tilde q) -  f_0(0 \mid \tilde q)\right| \ dt\right] \left|\tilde q\right| \ f(\tilde q) \ d\tilde q\right\| \\
    & \le \sigma_n  \dot{f}^+  c_+ \|\delta\| \int_{\bbR^{(p-1)}} \|\tilde q\|  \int_{-\infty}^{\infty} |t| \left|K'\left(t + \tilde q^{\top}\eta^*\right)\right| \ dt \ f(\tilde q) \ d\tilde q \\
    & \le \sigma_n  \dot{f}^+  c_+ \|\delta\| \int_{\bbR^{(p-1)}} \|\tilde q\|  \int_{-\infty}^{\infty} |t -  \tilde q^{\top}\eta^*| \left|K'\left(t\right)\right| \ dt \ f(\tilde q) \ d\tilde q \\
    & \le \sigma_n  \dot{f}^+  c_+ \|\delta\| \left[\int_{\bbR^{(p-1)}} \|\tilde q\|  \int_{-\infty}^{\infty} |t| \left|K'\left(t\right)\right| \ dt \ f(\tilde q) \ d\tilde q \right. \\
    & \qquad \qquad \qquad \left. + \int_{\bbR^{(p-1)}} \|\tilde q\|^2 \|\eta^*\|  \int_{-\infty}^{\infty} |K'(t)| \ dt \ f(\tilde q) \ d\tilde q\right] \\
     & \le \sigma_n  \dot{f}^+  c_+ \|\delta\| \left[\int_{\bbR^{(p-1)}} \|\tilde q\|  \int_{-\infty}^{\infty} |t| \left|K'\left(t\right)\right| \ dt \ f(\tilde q) \ d\tilde q + \cK\int_{\bbR^{(p-1)}} \|\tilde q\|^2 \int_{-\infty}^{\infty} |K'(t)| \ dt \ f(\tilde q) \ d\tilde q\right] \\
        & \lesssim \sigma_n \,.
\end{align*}
where the last bound follows from our assumptions using the fact that: 
\begin{align*}
    & \|R_{12}\| \\
    &= \left\|\int_{\bbR^{(p-1)}}\left[ \int_{-\infty}^{\infty} \left(\left(g\left(\sigma_n t- \tilde q^{\top}\tilde \psi_0, \tilde q\right) - g\left(- \tilde q^{\top}\tilde \psi_0, \tilde q\right)\right)\delta\right) K'\left(t + \tilde q^{\top} \eta^*\right)  f_0(0 \mid \tilde q) \ dt\right] \tilde q^{\top} \ f(\tilde q) \ d\tilde q\right\| \\
    & \le \int \|\tilde q\|\|\delta\|f_0(0 \mid \tilde q) \int_{-\infty}^{\infty} \left\|g\left(\sigma_n t- \tilde q^{\top}\tilde \psi_0, \tilde q\right) - g\left(- \tilde q^{\top}\tilde \psi_0, \tilde q\right) \right\|_{op}\left|K'\left(t + \tilde q^{\top} \eta^*\right)\right| \ dt \ f(\tilde q) \ d\tilde q \\
    & \le \dot{c}_+ \sigma_n \int \|\tilde q\|\|\delta\|f_0(0 \mid \tilde q)\dot \int_{-\infty}^{\infty} |t| \left|K'\left(t + \tilde q^{\top}\tilde \eta\right)\right| \ dt \ f(\tilde q) \ d\tilde q \hspace{0.2in} [\text{Assumption }\ref{eq:assm}]\\
    &  \lesssim \sigma_n \,.
\end{align*}
The other remainder term $R_{13}$ is the higher order term and can be shown to be $O(\sigma_n^2)$ using same techniques. This implies for all large $n$: 
\begin{align*}
    \left\|\nabla_{\beta \psi}\bbM^s(\th)\right\|_{op} & = O(1) \,.
\end{align*}
and similar calculation yields $ \left\|\nabla_{\delta \psi}\bbM^s(\th)\right\|_{op} = O(1)$. Using this we have: 
\begin{align}
    T_2 & = (\gamma - \gamma_0^s)^{\top}\nabla^2 \bbM^{s, \gamma \psi}(\tilde \th)(\psi - \psi^0_s) \notag \\
    & = (\beta - \beta_0^s)^{\top}\nabla_{\beta \psi}^2 \bbM^{s}(\tilde \th)(\psi - \psi^0_s) + (\delta - \delta_0^s)^{\top}\nabla_{\delta \psi}^2 \bbM^{s}(\tilde \th)(\psi - \psi^0_s) \notag \\
    & \ge - C\left[\|\beta - \beta_0^s\| + \|\delta - \delta_0^s\| \right]\|\psi - \psi^0_s\| \notag \\
    & \ge -C \sqrt{\sigma_n}\left[\|\beta - \beta_0^s\| + \|\delta - \delta_0^s\| \right]\frac{\|\psi - \psi^0_s\| }{\sqrt{\sigma_n}}  \notag \\
    \label{eq:hessian_cross} & \gtrsim - \sqrt{\sigma_n}\left(\|\beta - \beta_0^s\|^2 + \|\delta - \delta_0^s\|^2 +\frac{\|\psi - \psi^0_s\|^2 }{\sigma_n} \right)
\end{align}
Now for $T_3$ note that: 
\allowdisplaybreaks
\begin{align*}
& \sigma_n \nabla_{\psi\psi} \bbM^s_n(\th) \\
& = \frac{1}{\sigma_n}\bbE \left\{\left[-2\left(Y_i - X_i^{\top}\beta\right)X_i^{\top}\delta + (X_i^{\top}\delta)^2\right]\tilde Q_i\tilde Q_i^{\top} K''\left(\frac{Q_i^{\top}\psi_0 }{\sigma_n} + \tilde Q^{\top}\tilde \eta\right)\right\} \\
& = \frac{1}{\sigma_n}\bbE \left\{\left[-2\left(Y_i - X_i^{\top}\beta\right)X_i^{\top}\delta \right]\tilde Q_i\tilde Q_i^{\top} K''\left(\frac{Q_i^{\top}\psi_0 }{\sigma_n} + \tilde Q^{\top}\tilde \eta\right)\right\}  \\
& \qquad \qquad \qquad + \frac{1}{\sigma_n}\bbE \left\{(\delta^{\top}g(Q) \delta)\tilde Q_i\tilde Q_i^{\top} K''\left(\frac{Q_i^{\top}\psi_0 }{\sigma_n} + \tilde Q^{\top}\tilde \eta\right)\right\}  \\
& = \frac{1}{\sigma_n}\bbE \left\{\left[-2 X_i^{\top}\left(\beta_0 -\beta\right)X_i^{\top}\delta - 2(X_i^{\top}\delta_0)(X_i^{\top}\delta)\mathds{1}_{Q_i^{\top}\psi_0 > 0}\right]\tilde Q_i\tilde Q_i^{\top} K''\left(\frac{Q_i^{\top}\psi_0 }{\sigma_n} + \tilde Q^{\top}\tilde \eta\right)\right\}  \\
& \qquad \qquad \qquad + \frac{1}{\sigma_n}\bbE \left\{(\delta^{\top}g(Q) \delta)\tilde Q_i\tilde Q_i^{\top} K''\left(\frac{Q_i^{\top}\psi_0 }{\sigma_n} + \tilde Q^{\top}\tilde \eta\right)\right\}  \\
& = \frac{-2}{\sigma_n}\bbE \left\{((\beta_0 - \beta)^{\top}g(Q) \delta)\tilde Q_i\tilde Q_i^{\top} K''\left(\frac{Q_i^{\top}\psi_0 }{\sigma_n} + \tilde Q^{\top}\tilde \eta\right)\right\} \\
& \qquad \qquad \qquad + \frac{-2}{\sigma_n}\bbE \left\{(\delta_0^{\top}g(Q) \delta)\tilde Q_i\tilde Q_i^{\top} K''\left(\frac{Q_i^{\top}\psi_0 }{\sigma_n} + \tilde Q^{\top}\tilde \eta\right)\mathds{1}_{Q_i^{\top}\psi_0 > 0}\right\} \\
& \qquad \qquad \qquad \qquad \qquad \qquad + \frac{1}{\sigma_n}\bbE \left\{(\delta^{\top}g(Q) \delta)\tilde Q_i\tilde Q_i^{\top} K''\left(\frac{Q_i^{\top}\psi_0 }{\sigma_n} + \tilde Q^{\top}\tilde \eta\right)\right\}  \\
& = \underbrace{\frac{-2}{\sigma_n}\bbE \left\{((\beta_0 - \beta)^{\top}g(Q)\delta)\tilde Q_i\tilde Q_i^{\top} K''\left(\frac{Q_i^{\top}\psi_0 }{\sigma_n} + \tilde Q^{\top}\tilde \eta\right)\right\}}_{M_1} \\
& \qquad \qquad \qquad + \underbrace{\frac{-2}{\sigma_n}\bbE \left\{(\delta_0^{\top}g(Q) \delta_0)\tilde Q_i\tilde Q_i^{\top} K''\left(\frac{Q_i^{\top}\psi_0 }{\sigma_n} + \tilde Q^{\top}\tilde \eta\right)\mathds{1}_{Q_i^{\top}\psi_0 > 0}\right\}}_{M_2} \\
& \qquad \qquad \qquad \qquad \qquad \qquad +
\underbrace{\frac{-2}{\sigma_n}\bbE \left\{(\delta_0^{\top} g(Q) (\delta - \delta_0))\tilde Q_i\tilde Q_i^{\top} K''\left(\frac{Q_i^{\top}\psi_0 }{\sigma_n} + \tilde Q^{\top}\tilde \eta\right)\mathds{1}_{Q_i^{\top}\psi_0 > 0}\right\}}_{M_3} \\
& \qquad \qquad \qquad \qquad \qquad \qquad \qquad \qquad \qquad + \underbrace{\frac{1}{\sigma_n}\bbE \left\{(\delta^{\top}g(Q) \delta)\tilde Q_i\tilde Q_i^{\top} K''\left(\frac{Q_i^{\top}\psi_0 }{\sigma_n} + \tilde Q^{\top}\tilde \eta\right)\right\}}_{M_4}  \\
& := M_1 + M_2 + M_3 + M_4
\end{align*}
We next show that $M_1$ and $M_4$ are $O(\sigma_n)$. Towards that end note that for any two vectors $v_1, v_2$: 
\begin{align*}
    & \frac{1}{\sigma_n}\bbE \left\{(v_1^{\top}g(Q)v_2)\tilde Q_i\tilde Q_i^{\top} K''\left(\frac{Q_i^{\top}\psi_0 }{\sigma_n} + \tilde Q^{\top}\tilde \eta\right)\right\} \\
    & = \int \tilde q \tilde q^{\top} \int_{-\infty}^{\infty}(v_1^{\top}g(\sigma_nt - \tilde q^{\top}\tilde \eta, \tilde q)v_2) K''(t + \tilde q^{\top}\tilde \eta) f(\sigma_nt \mid \tilde q) \ dt \ f(\tilde q) \ d\tilde q \\
    & = \int \tilde q \tilde q^{\top} (v_1^{\top}g( - \tilde q^{\top}\tilde \eta, \tilde q)v_2)f(0 \mid \tilde q) f(\tilde q) \ d\tilde q \cancelto{0}{\int_{-\infty}^{\infty} K''(t) \ dt}  + R = R
\end{align*}
as $\int K''(t) \ dt = 0$ follows from our choice of kernel $K(x) = \Phi(x)$. Similar calculation as in the case of analyzing the remainder of $T_2$ yields $\|R\|_{op} = O(\sigma_n)$.
\noindent
This immediately implies $\|M_1\|_{op} = O(\sigma_n)$ and $\|M_4\|_{op} = O(\sigma_n)$. Now for $M_2$: 
\begin{align}
M_2 & = \frac{-2}{\sigma_n}\bbE \left\{(\delta_0^{\top}g(Q) \delta_0)\tilde Q_i\tilde Q_i^{\top} K''\left(\frac{Q_i^{\top}\psi_0 }{\sigma_n} + \tilde Q^{\top}\tilde \eta\right)\mathds{1}_{Q_i^{\top}\psi_0 > 0}\right\} \notag \\
& = -2\int \int (\delta_0^{\top}g(\sigma_n t - \tilde q^{\top}\tilde \psi_0) \delta_0)\tilde q\tilde q^{\top} K''\left(t + \tilde q^{\top}\eta^*\right)\mathds{1}_{t > 0} f_0(\sigma_n t \mid \tilde q) \ dt f(\tilde q) \ d\tilde q \notag \\
& = -2\int (\delta_0^{\top}g(- \tilde q^{\top}\tilde \psi_0) \delta_0)\tilde q\tilde q^{\top}  f_0(0 \mid \tilde q)  \int_{-\infty}^{\infty} K''\left(t + \tilde q^{\top}\eta^*\right)\mathds{1}_{t > 0} \ dt f(\tilde q) \ d\tilde q  + R \notag \\
\label{eq:M_2_double_deriv} & = 2\bbE\left[(\delta_0^{\top}g(- \tilde Q^{\top}\tilde \psi_0) \delta_0)\tilde 
Q\tilde Q^{\top}  f_0(0 \mid \tilde Q) K'(\tilde Q^{\top}\eta^*)\right] + R 
\end{align}
where the remainder term R is $O_p(\sigma_n)$ can be established as follows: 
\begin{align*}
R & = -2\left[\int \int (\delta_0^{\top}g(\sigma_n t - \tilde q^{\top}\tilde \psi_0) \delta_0)\tilde q\tilde q^{\top} K''\left(t + \tilde q^{\top}\eta^*\right)\mathds{1}_{t > 0} f_0(\sigma_n t \mid \tilde q) \ dt f(\tilde q) \ d\tilde q \right. \\
& \qquad \qquad - \left. \int (\delta_0^{\top}g(- \tilde q^{\top}\tilde \psi_0) \delta_0)\tilde q\tilde q^{\top}  f_0(0 \mid \tilde q)  \int_{-\infty}^{\infty} K''\left(t + \tilde q^{\top}\eta^*\right)\mathds{1}_{t > 0} \ dt f(\tilde q) \ d\tilde q \right] \\
& = -2\left\{\left[\int \int (\delta_0^{\top}g(\sigma_n t - \tilde q^{\top}\tilde \psi_0) \delta_0)\tilde q\tilde q^{\top} K''\left(t + \tilde q^{\top}\eta^*\right)\mathds{1}_{t > 0} f_0(\sigma_n t \mid \tilde q) \ dt f(\tilde q) \ d\tilde q \right. \right. \\
& \qquad \qquad - \left. \left. \int (\delta_0^{\top}g(\sigma_n t - \tilde q^{\top}\tilde \psi_0) \delta_0) \tilde q\tilde q^{\top}  f_0(0 \mid \tilde q)  \int_{-\infty}^{\infty} K''\left(t + \tilde q^{\top}\eta^*\right)\mathds{1}_{t > 0} \ dt f(\tilde q) \ d\tilde q \right] \right. \\
& \left. +  \left[\int (\delta_0^{\top}g(\sigma_n t - \tilde q^{\top}\tilde \psi_0) \delta_0)\tilde q\tilde q^{\top}  f_0(0 \mid \tilde q)  \int_{-\infty}^{\infty} K''\left(t + \tilde q^{\top}\eta^*\right)\mathds{1}_{t > 0} \ dt f(\tilde q) \ d\tilde q \right. \right. \\
& \qquad \qquad \left. \left. -\int (\delta_0^{\top}g(- \tilde q^{\top}\tilde \psi_0) \delta_0)\tilde q\tilde q^{\top}  f_0(0 \mid \tilde q)  \int_{-\infty}^{\infty} K''\left(t + \tilde q^{\top}\eta^*\right)\mathds{1}_{t > 0} \ dt f(\tilde q) \ d\tilde q \right]\right\} \\
& = -2(R_1 + R_2) \,.
\end{align*}
For $R_1$: 
\begin{align*}
\left\|R_1\right\|_{op} & =  \left\|\left[\int \int (\delta_0^{\top}g(\sigma_n t - \tilde q^{\top}\tilde \psi_0) \delta_0)\tilde q\tilde q^{\top} K''\left(t + \tilde q^{\top}\eta^*\right)\mathds{1}_{t > 0} f_0(\sigma_n t \mid \tilde q) \ dt f(\tilde q) \ d\tilde q \right. \right. \,.\\
& \qquad \qquad - \left. \left. \int (\delta_0^{\top}g(\sigma_n t - \tilde q^{\top}\tilde \psi_0) \delta_0)\tilde q\tilde q^{\top}  f_0(0 \mid \tilde q)  \int_{-\infty}^{\infty} K''\left(t + \tilde q^{\top}\eta^*\right)\mathds{1}_{t > 0} \ dt f(\tilde q) \ d\tilde q \right] \right\|_{op} \\
& \le c_+ \int \int \|\tilde q\|^2 |K''\left(t + \tilde q^{\top}\eta^*\right)| |f_0(\sigma_n t \mid \tilde q) -f_0(0\mid \tilde q)| \ dt \ f(\tilde q) \ d\tilde q  \\
& \le c_+ F_+\sigma_n  \int \|\tilde q\|^2 \int |t| |K''\left(t + \tilde q^{\top}\eta^*\right)| \ dt \ f(\tilde q) \ d\tilde q \\
& = c_+ F_+\sigma_n  \int \|\tilde q \|^2 \int |t -  \tilde q^{\top}\eta^*| |K''\left(t\right)| \ dt \ f(\tilde q) \ d\tilde q \\
& \le c_+ F_+ \sigma_n \left[\bbE[\|\tilde Q\|^2]\int |t||K''(t)| \ dt + \|\eta^*\|\bbE[\|\tilde Q\|^3]\int |K''(t)| \ dt\right] = O(\sigma_n) \,.
\end{align*}
and similarly for $R_2$: 
\begin{align*}
\|R_2\|_{op} & = \left\|\left[\int (\delta_0^{\top}g(\sigma_n t - \tilde q^{\top}\tilde \psi_0) \delta_0)\tilde q\tilde q^{\top}  f_0(0 \mid \tilde q)  \int_{-\infty}^{\infty} K''\left(t + \tilde q^{\top}\eta^*\right)\mathds{1}_{t > 0} \ dt f(\tilde q) \ d\tilde q \right. \right. \\
& \qquad \qquad \left. \left. -\int (\delta_0^{\top}g(- \tilde q^{\top}\tilde \psi_0) \delta_0)\tilde q\tilde q^{\top}  f_0(0 \mid \tilde q)  \int_{-\infty}^{\infty} K''\left(t + \tilde q^{\top}\eta^*\right)\mathds{1}_{t > 0} \ dt f(\tilde q) \ d\tilde q \right]\right\|_{op} \\
& \le F_+ \|\delta_0\|^2 \int \left\|g(\sigma_n t - \tilde q^{\top}\tilde \psi_0)  - g( - \tilde q^{\top}\tilde \psi_0) \right\|_{op} \|\tilde q\|^2 \int_{-\infty}^{\infty} |K''\left(t + \tilde q^{\top}\eta^*\right)| \ dt \\
& \le G_+ F_+ \sigma_n \int \|\tilde q\|^2 \int_{-\infty}^{\infty} |t||K''\left(t + \tilde q^{\top}\eta^*\right)| \ dt = O(\sigma_n) \,.
\end{align*}
Therefore from \eqref{eq:M_2_double_deriv} we conclude: 
\begin{equation}
M_2 = 2\bbE\left[(\delta_0^{\top}g(- \tilde Q^{\top}\tilde \psi_0) \delta_0)\tilde 
Q\tilde Q^{\top}  f_0(0 \mid \tilde Q) K'(\tilde Q^{\top}\eta^*)\right] + O(\sigma_n) \,.
\end{equation}
Similar calculation for $M_3$ yields: 
\begin{equation*}
M_3 = 2\bbE\left[(\delta_0^{\top}g(- \tilde Q^{\top}\tilde \psi_0)(\delta - \delta_0))\tilde 
Q\tilde Q^{\top}  f_0(0 \mid \tilde Q) K'(\tilde Q^{\top}\eta^*)\right] + O(\sigma_n) \,.
\end{equation*}
i.e. 
\begin{equation}
\|M_3\|_{op} \le c_+ \bbE\left[\|\tilde Q\|^2f_0(0 \mid \tilde Q) K'(\tilde Q^{\top}\eta^*)\right]\|\delta_0\| \|\delta - \delta_0\| \,.
\end{equation}
Now we claim that for any $\cK < \infty$, $\lambda_{\min} (M_2) > 0$ for all $\|\eta^*\| \le \cK$. Towards that end, define a function $\lambda:B_{\bbR^{2d}}(1) \times B_{\bbR^{2d}}(\cK) \to \bbR_+$ as: 
$$
\lambda: (v, \eta) \mapsto 2\bbE\left[(\delta_0^{\top}g(- \tilde Q^{\top}\tilde \psi_0) \delta_0) 
\left(v^{\top}\tilde Q\right) ^2 f_0(0 \mid \tilde Q) K'(\tilde Q^{\top}\eta)\right]
$$
Clearly $\lambda \ge 0$ and is continuous on a compact set. Hence its infimum must be attained. Suppose the infimum is $0$, i.e. there exists $(v^*, \eta^*)$ such that: 
$$
\bbE\left[(\delta_0^{\top}g(- \tilde Q^{\top}\tilde \psi_0) \delta_0) 
\left(v^{*^{\top}}\tilde Q\right) ^2 f_0(0 \mid \tilde Q) K'(\tilde Q^{\top}\eta^*)\right] = 0 \,.
$$
as $\lambda_{\min}(g(\dot)) \ge c_+$, we must have $\left(v^{*^{\top}}\tilde Q\right) ^2 f_0(0 \mid \tilde Q) K'(\tilde Q^{\top}\eta^*) = 0$ almost surely. But from our assumption, $\left(v^{*^{\top}}\tilde Q\right) ^2 > 0$ and $K'(\tilde Q^{\top}\eta^*) > 0$ almost surely, which implies $f_0(0 \mid \tilde q) = 0$ almost surely, which is a contradiction. Hence there exists $\lambda_-$ such that: 
$$
\lambda_{\min} (M_2)  \ge \lambda_- > 0 \ \ \forall \ \ \|\psi - \psi_0^s\| \le \cK \,.
$$
Hence we have: 
$$
\lambda_{\min}\left(\sigma_n \nabla_{\psi \psi}\bbM^2(\theta)\right) \ge \frac{\lambda_-}{2}(1 - O(\sigma_n)) 
$$
for all theta such that $d_*(\theta, \theta_0^s) \le \eps \,.$ 
\begin{align}
\label{eq:hessian_psi}
    & \frac{1}{\sigma_n}(\psi - \psi_0^s)^{\top}\sigma_n \nabla^{\psi \psi}\bbM^s(\tilde \th) (\psi - \psi^0) \gtrsim \frac{\|\psi - \psi^s_0\|^2}{\sigma_n} \left(1- O(\sigma_n)\right)  
\end{align}
From equation \eqref{eq:hessian_gamma}, \eqref{eq:hessian_cross} and \eqref{eq:hessian_psi} we have: 
\begin{align*}
& \frac12 (\th_0 - \th^0_s)^{\top}\nabla^2 \bbM^s(\th^*)(\th_0 - \th^0_s) \\
& \qquad \qquad \gtrsim \left[\|\beta - \beta^s_0\|^2 + \|\gamma - \gamma^s_0\|^2 + \frac{\|\psi - \psi^s_0\|^2}{\sigma_n}\right]\mathds{1}_{\|\psi - \psi_0^s\| \le \cK \sigma_n}  \,.
\end{align*}
This, along with equation \eqref{eq:lower_curv_smooth} concludes the proof.  
\end{proof}

\subsection{Proof of Lemma \ref{asymp-normality}}
We start by proving that analogues of Lemma 2 of \cite{seo2007smoothed}: we show that: 
\begin{align*}
\lim_{n \to \infty} \bbE\left[ \sqrt{n\sigma_n}\nabla \bbM_n^{s, \psi}(\theta_0)\right] & = 0 \\
\lim_{n \to \infty} \var\left[ \sqrt{n\sigma_n}\nabla \bbM_n^{s, \psi}(\theta_0)\right] & = V^{\psi}
\end{align*}
for some matrix $V^{\psi}$ which will be specified later in the proof. To prove the limit of the expectation: 
\begin{align*}
&  \bbE\left[ \sqrt{n\sigma_n}\nabla \bbM_n^{s, \psi}(\theta_0)\right] \\
& =  \sqrt{\frac{n}{\sigma_n}}\bbE\left[\left\{(Y - X^{\top}(\beta_0 + \delta_0))^2 - (Y - X^{\top}\beta_0)^2\right\}\tilde Q K'\left(\frac{Q^{\top}\psi_0}{\sigma_n}\right)\right] \\
& =  \sqrt{\frac{n}{\sigma_n}}\bbE\left[\left(\delta_0^{\top}g(Q)\delta_0\right)\left(1 - 2\mathds{1}_{Q^{\top}\psi_0 > 0}\right)\tilde Q K'\left(\frac{Q^{\top}\psi_0}{\sigma_n}\right)\right] \\
& = \sqrt{\frac{n}{\sigma_n}} \times \sigma_n \int \int \left(\delta_0^{\top}g(\sigma_nt - \tilde q^{\top}\tilde \psi_0, \tilde q)\delta_0\right)\left(1 - 2\mathds{1}_{t > 0}\right)\tilde q K'\left(t\right) \ f_0(\sigma_n t \mid \tilde q) f (\tilde q) \ dt \ d\tilde q \\
& = \sqrt{n\sigma_n} \left[\int \tilde q \left(\delta_0^{\top}g(- \tilde q^{\top}\tilde \psi_0, \tilde q)\delta_0\right)f_0(0 \mid \tilde q) \cancelto{0}{\left(\int_{-\infty}^{\infty} \left(1 - 2\mathds{1}_{t > 0}\right)K'\left(t\right)  \ dt\right)} f (\tilde q) d\tilde q  + O(\sigma_n)\right] \\
& = O(\sqrt{n\sigma_n^3}) = o(1) \,.
\end{align*}
For the variance part: 
\begin{align*}
& \var\left[ \sqrt{n\sigma_n}\nabla \bbM_n^{s, \psi}(\theta_0)\right] \\
& = \frac{1}{\sigma_n}\var\left(\left\{(Y - X^{\top}(\beta_0 + \delta_0))^2 - (Y - X^{\top}\beta_0)^2\right\}\tilde Q K'\left(\frac{Q^{\top}\psi_0}{\sigma_n}\right)\right) \\
& = \frac{1}{\sigma_n}\bbE\left(\left\{(Y - X^{\top}(\beta_0 + \delta_0))^2 - (Y - X^{\top}\beta_0)^2\right\}^2 \tilde Q\tilde Q^{\top} \left(K'\left(\frac{Q^{\top}\psi_0}{\sigma_n}\right)\right)^2\right) \\
& \qquad \qquad + \frac{1}{\sigma_n}\bbE^{\otimes 2}\left[\left\{(Y - X^{\top}(\beta_0 + \delta_0))^2 - (Y - X^{\top}\beta_0)^2\right\}\tilde Q K'\left(\frac{Q^{\top}\psi_0}{\sigma_n}\right)\right]
\end{align*}
The outer product of the expectation (the second term of the above summand) is $o(1)$ which follows from our previous analysis of the expectation term. For the second moment: 
\begin{align*}
& \frac{1}{\sigma_n}\bbE\left(\left\{(Y - X^{\top}(\beta_0 + \delta_0))^2 - (Y - X^{\top}\beta_0)^2\right\}^2 \tilde Q\tilde Q^{\top} \left(K'\left(\frac{Q^{\top}\psi_0}{\sigma_n}\right)\right)^2\right) \\
& = \frac{1}{\sigma_n}\bbE\left(\left\{(X^{\top}\delta_0)^2(1 - 2\mathds{1}_{Q^{\top}\psi_0 > 0}) -2\eps (X^{\top}\delta_0)\right\}^2 \tilde Q\tilde Q^{\top} \left(K'\left(\frac{Q^{\top}\psi_0}{\sigma_n}\right)\right)^2\right) \\
& = \frac{1}{\sigma_n}\left[\bbE\left((X^{\top}\delta_0)^4 \tilde Q\tilde Q^{\top} \left(K'\left(\frac{Q^{\top}\psi_0}{\sigma_n}\right)\right)^2\right) + 4\sigma_\eps^2\bbE\left((X^{\top}\delta_0)^2 \tilde Q\tilde Q^{\top} \left(K'\left(\frac{Q^{\top}\psi_0}{\sigma_n}\right)\right)^2\right)  \right] \\
& \longrightarrow \left(\int_{-\infty}^{\infty}(K'(t))^2 \ dt\right)\left[\bbE\left(g_{4, \delta_0}(-\tilde Q^{\top}\tilde \psi_0, \tilde Q)\tilde Q\tilde Q^{\top}f_0(0 \mid \tilde Q)\right) \right. \\
& \hspace{10em}+ \left. 4\sigma_\eps^2\bbE\left(\delta_0^{\top}g(-\tilde Q^{\top}\tilde \psi_0, \tilde Q)\delta_0 \tilde Q\tilde Q^{\top}f_0(0 \mid \tilde Q)\right)\right] \\
& := 2V^{\psi} \,.
\end{align*}
Finally using Lemma 6 of \cite{horowitz1992smoothed} we conclude that $ \sqrt{n\sigma_n}\nabla \bbM_n^{s, \psi}(\theta_0) \implies \cN(0, V^{\psi})$. 
\\\\
\noindent
We next prove that $ \sqrt{n}\nabla \bbM_n^{s, \gamma}(\theta_0)$ to normal distribution. This is a simple application of CLT along with bounding some remainder terms which are asymptotically negligible. The gradients are: 
\begin{align*}
\sqrt{n}\begin{pmatrix} \nabla_{\beta}\bbM^s_n(\theta_0^s) \\ \nabla_{\delta}\bbM^s_n(\theta_0^s) \end{pmatrix} & = 2\sqrt{n}\begin{pmatrix}\frac1n \sum_i X_i(X_i^{\top}\beta_0 - Y_i)+ \frac1n \sum_i X_iX_i^{\top}\delta_0 K\left(\frac{Q_i^{\top}\psi_0}{\sigma_n}\right)  \\ 
\frac1n \sum_i \left[X_i(X_i^{\top}\beta_0 + X_i^{\top}\delta_0 - Y_i)\right] K\left(\frac{Q_i^{\top}\psi_0^s}{\sigma_n}\right) \end{pmatrix} \\
& = 2\begin{pmatrix}  -\frac{1}{\sqrt{n}} \sum_i X_i \eps_i + \frac{1}{\sqrt{n}} \sum_i X_iX_i^{\top}\delta_0 \left(K\left(\frac{Q_i^{\top}\psi_0}{\sigma_n}\right) - \mathds{1}_{Q_i^{\top}\psi_0 > 0}\right)  \\  -\frac{1}{\sqrt{n}} \sum_i X_i \eps_iK\left(\frac{Q_i^{\top}\psi_0}{\sigma_n}\right)  + \frac{1}{\sqrt{n}} \sum_i X_iX_i^{\top}\delta_0K\left(\frac{Q_i^{\top}\psi_0}{\sigma_n}\right)\mathds{1}_{Q_i^{\top}\psi_0 \le 0} 
\end{pmatrix}\\
& = 2\begin{pmatrix}  -\frac{1}{\sqrt{n}} \sum_i X_i \eps_i + R_1  \\  -\frac{1 }{\sqrt{n}} \sum_i X_i \eps_i\mathbf{1}_{Q_i^{\top}\psi_0 > 0}  +R_2 
\end{pmatrix}
\end{align*}
That $(1/\sqrt{n})\sum_i X_i \eps_i$ converges to normal distribution follows from a simple application of CLT. Therefore, once we prove that $R_1$ and $R_2$ are $o_p(1)$ we have: 
$$
\sqrt{n} \nabla_{\gamma}\bbM^s_n(\theta_0^s) \overset{\mathscr{L}}{\implies} \cN\left(0, 4V^{\gamma}\right)
$$
where: 
\begin{equation}
\label{eq:def_v_gamma}
V^{\gamma} = \sigma_\eps^2 \begin{pmatrix}\bbE\left[XX^{\top}\right] & \bbE\left[XX^{\top}\mathds{1}_{Q^{\top}\psi_0 > 0}\right] \\
\bbE\left[XX^{\top}\mathds{1}_{Q^{\top}\psi_0 > 0}\right] & \bbE\left[XX^{\top}\mathds{1}_{Q^{\top}\psi_0 > 0}\right] \end{pmatrix} \,.
\end{equation}
To complete the proof we now show that $R_1$ and $R_2$ are $o_p(1)$. For $R_1$, we show that $\bbE[R_1] \to 0$ and $\var(R_1) \to 0$.  For the expectation part: 
\begin{align*}
 & \bbE[R_1] \\
 & = \sqrt{n}\bbE\left[XX^{\top}\delta_0 \left(K\left(\frac{Q^{\top}\psi_0}{\sigma_n}\right) - \mathds{1}_{Q^{\top}\psi_0 > 0}\right)\right] \\
 & = \sqrt{n}\delta_0^{\top}\bbE\left[g(Q) \left(K\left(\frac{Q^{\top}\psi_0}{\sigma_n}\right) - \mathds{1}_{Q^{\top}\psi_0 > 0}\right)\right] \\
 & = \sqrt{n}\int_{\bbR^{p-1}} \int_{-\infty}^{\infty} \delta_0^{\top}g\left(t-\tilde q^{\top}\tilde \psi_0, \tilde q\right)\left(\mathds{1}_{t > 0} - K\left(\frac{t}{\sigma_n}\right)\right)f_0(t \mid \tilde q) f(\tilde q) \ dt \ d\tilde q \\
  & = \sqrt{n}\sigma_n \int_{\bbR^{p-1}} \int_{-\infty}^{\infty} \delta_0^{\top}g\left(\sigma_n z-\tilde q^{\top}\tilde \psi_0, \tilde q\right)\left(\mathds{1}_{z > 0} - K\left(z\right)\right)f_0(\sigma_n z \mid \tilde q) f(\tilde q) \ dz \ d\tilde q \\
  & =  \sqrt{n}\sigma_n \left[\int_{\bbR^{p-1}}\delta_0^{\top}g\left(-\tilde q^{\top}\tilde \psi_0, \tilde q\right) f_0(0 \mid \tilde q) f(\tilde q)  \ d\tilde q \cancelto{0}{\left[\int_{-\infty}^{\infty}  \left(\mathds{1}_{z > 0} - K\left(z\right)\right)\ dz\right]}  + O(\sigma_n) \right] \\
  & = O(\sqrt{n}\sigma_n^2) = o(1) \,.
\end{align*}
For the variance part: 
\begin{align*}
& \var(R_1) \\
& = \var\left(XX^{\top}\delta_0 \left(K\left(\frac{Q^{\top}\psi_0}{\sigma_n}\right) - \mathds{1}_{Q^{\top}\psi_0 > 0}\right)\right) \\
& \le \bbE\left[\|X\|^2 \delta_0^{\top}XX^{\top}\delta_0 \left(K\left(\frac{Q^{\top}\psi_0}{\sigma_n}\right) - \mathds{1}_{Q^{\top}\psi_0 > 0}\right)^2\right] \\
& = O(\sigma_n ) = o(1) \,.
\end{align*}
This shows that $\var(R_1) = o(1)$ and this establishes $R_1 = o_p(1)$. The proof for $R_2$ is similar and hence skipped for brevity. 
\\\\
Our next step is to prove that $\sqrt{n\sigma_n}\nabla_{\psi}\bbM^s_n(\theta_0^s)$ and $\sqrt{n}\nabla \bbM^{s, \gamma}_n(\theta_0^s)$ are asymptotically uncorrelated. Towards that end, first note that: 
\begin{align*}
& \bbE\left[X(X^{\top}\beta_0 - Y) + XX^{\top}\delta_0 K\left(\frac{Q^{\top}\psi_0}{\sigma_n}\right) \right] \\
& = \bbE\left[XX^{\top}\delta_0\left(K\left(\frac{Q^{\top}\psi_0}{\sigma_n}\right) - \mathds{1}_{Q^{\top}\psi_0 > 0}\right)\right] \\
& = \bbE\left[g(Q)\delta_0\left(K\left(\frac{Q^{\top}\psi_0}{\sigma_n}\right) - \mathds{1}_{Q^{\top}\psi_0 > 0}\right)\right] \\
& = \sigma_n \int \int g(\sigma_n t - \tilde q^{\top}\tilde \psi_0, \tilde q)(K(t) - \mathds{1}_{t>0})f_0(\sigma_n t \mid \tilde q) f(\tilde q) \ dt \ d\tilde q  \\
& = \sigma_n \int g(- \tilde q^{\top}\tilde \psi_0, \tilde q)\cancelto{0}{\int_{-\infty}^{\infty} (K(t) - \mathds{1}_{t>0}) \ dt} \ f_0(0 \mid \tilde q) f(\tilde q) \ dt \ d\tilde q + O(\sigma_n^2) \\
& = O(\sigma_n^2) \,.
\end{align*}
Also, it follows from the proof of $\bbE\left[\sqrt{n\sigma_n}\nabla_\psi \bbM_n^s(\theta_0)\right] \to 0$ we have: 
$$
\bbE\left[\left\{(Y - X^{\top}(\beta_0 + \delta_0))^2 - (Y - X^{\top}\beta_0)^2\right\}\tilde Q K'\left(\frac{Q^{\top}\psi_0}{\sigma_n}\right)\right] = O(\sigma_n^2) \,.
$$
Finally note that: 
\begin{align*}
& \bbE\left[\left(\left\{(Y - X^{\top}(\beta_0 + \delta_0))^2 - (Y - X^{\top}\beta_0)^2\right\}\tilde Q K'\left(\frac{Q^{\top}\psi_0}{\sigma_n}\right)\right) \times \right. \\
& \qquad \qquad \qquad \qquad \qquad \left. \left(X(X^{\top}\beta_0 - Y) + XX^{\top}\delta_0 K\left(\frac{Q^{\top}\psi_0}{\sigma_n}\right)\right)^{\top}\right] \\
& = \bbE\left[\left(\left\{(X^{\top}\delta_0)^2(1 - 2\mathds{1}_{Q^{\top}\psi_0 > 0}) - 2\eps X^{\top}\delta_0\right\}\tilde QK'\left(\frac{Q^{\top}\psi_0}{\sigma_n}\right)\right) \right. \\
& \qquad \qquad \qquad \qquad \qquad \left. \times \left\{XX^{\top}\delta_0\left(K\left(\frac{Q^{\top}\psi_0}{\sigma_n}\right) - \mathds{1}_{Q^{\top}\psi_0 > 0}\right) - X\eps \right\}\right] \\
& = \bbE\left[\left((X^{\top}\delta_0)^2(1 - 2\mathds{1}_{Q^{\top}\psi_0 > 0})\tilde QK'\left(\frac{Q^{\top}\psi_0}{\sigma_n}\right)\right) \right. \\
& \qquad \qquad \qquad  \left. \times \left(XX^{\top}\delta_0\left(K\left(\frac{Q^{\top}\psi_0}{\sigma_n}\right) - \mathds{1}_{Q^{\top}\psi_0 > 0}\right)\right)^{\top}\right] \\
& \qquad \qquad + 2\sigma^2_\eps \bbE\left[XX^{\top}\delta_0\tilde Q^{\top}K'\left(\frac{Q^{\top}\psi_0}{\sigma_n}\right)\right] \\
&= O(\sigma_n ) \,.
\end{align*}
Now getting back to the covariance: 
\begin{align*}
& \bbE\left[\left(\sqrt{n\sigma_n}\nabla_{\psi}\bbM^s_n(\theta_0)\right)\left(\sqrt{n}\nabla_\beta \bbM^s_n(\theta_0)\right)^{\top}\right] \\
& = \frac{1}{\sqrt{\sigma_n}}\bbE\left[\left(\left\{(Y - X^{\top}(\beta_0 + \delta_0))^2 - (Y - X^{\top}\beta_0)^2\right\}\tilde Q K'\left(\frac{Q^{\top}\psi_0}{\sigma_n}\right)\right) \times \right. \\
& \qquad \qquad \qquad \qquad \qquad \left. \left(X(X^{\top}\beta_0 - Y) + XX^{\top}\delta_0 K\left(\frac{Q^{\top}\psi_0}{\sigma_n}\right)\right)^{\top}\right] \\
& \qquad \qquad + \frac{n-1}{\sqrt{\sigma_n}}\left[\bbE\left[\left\{(Y - X^{\top}(\beta_0 + \delta_0))^2 - (Y - X^{\top}\beta_0)^2\right\}\tilde Q K'\left(\frac{Q^{\top}\psi_0}{\sigma_n}\right)\right] \right. \\
& \qquad \qquad \qquad \qquad \times \left. \left(\bbE\left[X(X^{\top}\beta_0 - Y) + XX^{\top}\delta_0 K\left(\frac{Q^{\top}\psi_0}{\sigma_n}\right) \right]\right)^{\top}\right] \\
& = \frac{1}{\sqrt{\sigma_n}} \times O(\sigma_n) + \frac{n-1}{\sqrt{\sigma_n}} \times O(\sigma_n^4) = o(1) \,.
\end{align*}
The proof for $\bbE\left[\left(\sqrt{n\sigma_n}\nabla_{\psi}\bbM^s_n(\theta_0)\right)\left(\sqrt{n}\nabla_\delta \bbM^s_n(\theta_0)\right)^{\top}\right]$ is similar and hence skipped. This completes the proof.

\subsection{Proof of Lemma \ref{conv-prob}}
To prove first note that by simple application of law of large number (and using the fact that $\|\psi^* - \psi_0\|/\sigma_n = o_p(1)$ we have: 
\begin{align*}
\nabla^2 \bbM_n^{s, \gamma}(\theta^*) & = 2\begin{pmatrix}\frac{1}{n}\sum_i X_i X_i^{\top} & \frac{1}{n}\sum_i X_i X_i^{\top}K\left(\frac{Q_i^{\top}\psi^*}{\sigma_n}\right) \\ \frac{1}{n}\sum_i X_i X_i^{\top}K\left(\frac{Q_i^{\top}\psi^*}{\sigma_n}\right) & \frac{1}{n}\sum_i X_i X_i^{\top}K\left(\frac{Q_i^{\top}\psi^*}{\sigma_n}\right)
\end{pmatrix} \\
& \overset{p}{\longrightarrow} 2 \begin{pmatrix}\bbE\left[XX^{\top}\right] & \bbE\left[XX^{\top}\mathds{1}_{Q^{\top}\psi_0 > 0}\right] \\ \bbE\left[XX^{\top}\mathds{1}_{Q^{\top}\psi_0 > 0}\right] & \bbE\left[XX^{\top}\mathds{1}_{Q^{\top}\psi_0 > 0}\right] \end{pmatrix} := 2Q^{\gamma}
\end{align*}
The proof of the fact that $\sqrt{\sigma_n}\nabla^2_{\psi \gamma}\bbM_n^s(\theta^*) = o_p(1)$ is same as the proof of Lemma 5 of \cite{seo2007smoothed} and hence skipped. Finally the proof of the fact that 
$$
\sigma_n \nabla^2_{\psi \psi}\bbM_n^s(\theta^*) \overset{p}{\longrightarrow} 2Q^{\psi}\,.
$$
for some non-negative definite matrix $Q$. The proof is similar to that of Lemma 6 of \cite{seo2007smoothed}, using which we conclude the proof with:  
$$
Q^{\psi} = \left(\int_{-\infty}^{\infty} -\text{sign}(t) K''(t) \ dt\right) \times \bbE\left[\delta_0^{\top} g\left(-\tilde Q^{\top}\tilde \psi_0, \tilde Q\right)\delta_0 \tilde Q \tilde Q^{\top} f_0(0 \mid \tilde Q)\right]  \,.
$$
This completes the proof. So we have established: 
\begin{align*}
\sqrt{n}\left(\hat \gamma^s  - \gamma_0\right) & \overset{\mathscr{L}}{\implies} \cN\left(0, \left(Q^\gamma\right)^{-1}V^\gamma \left(Q^\gamma\right)^{-1}\right) \,, \\
\sqrt{\frac{n}{\sigma_n}}\left(\hat \psi^s  - \psi_0\right) & \overset{\mathscr{L}}{\implies} \cN\left(0, \left(Q^\psi\right)^{-1}V^\psi \left(Q^\psi\right)^{-1}\right) \,.
\end{align*}
and they are asymptotically uncorrelated.

\section{Proof of Theorem \ref{thm:binary}}
\label{sec:supp_classification}
In this section, we present the details of the binary response model, the assumptions, a roadmap of the proof and then finally prove Theorem \ref{thm:binary}.
\noindent 
\begin{assumption}
\label{as:distribution}
The below assumptions pertain to the parameter space and the distribution of $Q$:
\begin{enumerate}
\item  The parameter space $\Theta$ is a compact subset of $\mathbb{R}^p$. 
\item  The support of the distribution of $Q$ contains an open subset around origin of $\mathbb{R}^p$ and the distribution of $Q_1$ conditional on $\tilde{Q} = (Q_2, \dots, Q_p)$ has, almost surely, everywhere positive density with respect to Lebesgue measure. 
\end{enumerate}
\end{assumption}

\noindent 
For notational convenience, define the following: 
\begin{enumerate}
\item Define $f_{\psi} (\cdot | \tilde{Q})$ to the conditional density of $Q^{\top}\psi$ given $\tilde{Q}$ for $\theta \in \Theta$. Note that the following relation holds: $$f_{\theta}(\cdot |\tilde{Q}) = f_{Q_1}(\cdot - \tilde{\psi}^{\top}\tilde{Q} | \tilde{Q}) \,.$$ where we define $f_{Q_1}(\cdot | \tilde X)$ is the conditional density of $Q_1$ given $\tilde Q$. 
\item Define $f_0(\cdot | \tilde{Q}) = f_{\psi_0}(\cdot | \tilde{Q})$ where $\psi_0$ is the unique minimizer of the population score function $M(\psi)$. 
\item Define $f_{\tilde Q}(\cdot)$ to be the marginal density of $\tilde Q$. 
\end{enumerate}

\noindent
The rest of the assumptions are as follows: 
\begin{assumption}
\label{as:differentiability}
$f_0(y|\tilde{Q})$ is at-least once continuously differentiable almost surely for all $\tilde{Q}$. Also assume that there exists $\delta$ and $t$ such that $$\inf_{|y| \le \delta} f_0(y|\tilde{Q}) \ge t$$ for all $\tilde{Q}$ almost surely. 
\end{assumption}
This assumption can be relaxed in the sense that one can allow the lower bound $t$ to depend on $\tilde{Q}$, provided that some further assumptions are imposed on $\bbE(t(\tilde{Q}))$. As this does not add anything of significance to the import of this paper, we use Assumption \ref{as:differentiability} to simplify certain calculations.

\begin{assumption}
\label{as:density_bound}
Define $m\left(\tilde{Q}\right) = \sup_{t}f_{X_1}(t | \tilde{Q}) = \sup_{\theta} \sup_{t}f_{\theta}(t | \tilde{Q})$. Assume that $\bbE\left(m\left(\tilde{Q}\right)^2\right) < \infty$. 
\end{assumption}


\begin{assumption}
\label{as:derivative_bound}
Define $h(\tilde{Q}) = \sup_{t} f_0'(t | \tilde{Q})$. Assume that $\bbE\left(h^2\left(\tilde{Q}\right)\right) < \infty$.  
\end{assumption}
\begin{assumption}
\label{as:eigenval_bound}
Assume that $f_{\tilde{Q}}(0) > 0$ and also that the minimum eigenvalue of $\bbE\left(\tilde{Q}\tilde{Q}^{\top}f_0(0|\tilde{Q})\right) > 0$. 
\end{assumption}

\subsection{Sufficient conditions for above assumptions }
We now demonstrate some sufficient conditions for the above assumptions to hold. If the support of $Q$ is compact and both $f_1(\cdot | \tilde Q)$ and $f'_1(\cdot | \tilde Q)$ are uniformly bounded in $\tilde Q$, then Assumptions $(\ref{as:distribution}, \ \ref{as:differentiability}, \ \ref{as:density_bound},\ \ref{as:derivative_bound})$ follow immediately. The first part of Assumption \ref{as:eigenval_bound}, i.e. the assumption $f_{\tilde{Q}}(0) > 0$ is also fairly general and satisfied by many standard probability distributions. The second part of Assumption \ref{as:eigenval_bound} is satisfied when $f_0(0|\tilde{Q})$ has some lower bound independent of $\tilde{Q}$ and $\tilde{Q}$ has non-singular dispersion matrix. 

Below we state our main theorem. In the next section, we first provide a roadmap of our proof and then fill in the corresponding details. For the rest of the paper, \emph{we choose our bandwidth $\sigma_n$ to satisfy $\frac{\log{n}}{n \sigma_n} \rightarrow 0$}.

\noindent
\begin{remark}
As our procedure requires the weaker condition $(\log{n})/(n \sigma_n) \rightarrow 0$, it is easy to see from the above Theorem that the rate of convergence can be almost as fast as $n/\sqrt{\log{n}}$. 
\end{remark}
\begin{remark}
Our analysis remains valid in presence of an intercept term. Assume, without loss of generality, that the second co-ordinate of $Q$ is $1$ and let $\tilde{Q} = (Q_3, \dots, Q_p)$. It is not difficult to check that all our calculations go through under this new definition of $\tilde Q$. We, however, avoid this scenario for simplicity of exposition. 
\end{remark}
\vspace{0.2in}
\noindent
{\bf Proof sketch: }We now provide a roadmap of the proof of Theorem \ref{thm:binary} in this paragraph while the elaborate technical derivations in the later part. 
Define the following: $$T_n(\psi) = \nabla \bbM_n^s(\psi)= -\frac{1}{n\sigma_n}\sum_{i=1}^n (Y_i - \gamma)K'\left(\frac{Q_i^{\top}\psi}{\sigma_n}\right)\tilde{Q}_i$$ $$Q_n(\psi) = \nabla^2 \bbM_n^s(\psi) = -\frac{1}{n\sigma_n^2}\sum_{i=1}^n (Y_i - \gamma)K''\left(\frac{Q_i^{\top}\psi}{\sigma_n}\right)\tilde{Q}_i\tilde{Q}_i^{\top}$$ As $\hat{\psi}^s$ minimizes $\mathbb{M}^s_n(\psi)$ we have $T_n(\hat{\psi}^s) = 0$. Using one step Taylor expansion we have:
\allowdisplaybreaks 
\begin{align*}
T_n(\hat{\psi}^s) = T_n(\psi_0) + Q_n(\psi^*_n)\left(\hat{\psi}^s - \psi_0\right) = 0
\end{align*}
or: 
\begin{equation}
\label{eq:main_eq}  \sqrt{n/\sigma_n}\left(\hat{\psi}^s - \psi_0\right) = -\left(\sigma_nQ_n(\psi^*_n)\right)^{-1}\sqrt{n\sigma_n}T_n(\psi_0) 
\end{equation}
for some intermediate point $\psi^*_n$ between $\hat \psi^s$ and $\psi_0$. The following lemma establishes the asymptotic properties of $T_n(\psi_0)$: 
\begin{lemma}[Asymptotic Normality of $T_n$]
\label{asymp-normality}
\label{asymp-normality}
If $n\sigma_n^{3} \rightarrow \lambda$, then 
$$
\sqrt{n \sigma_n} T_n(\psi_0) \Rightarrow \mathcal{N}(\mu, \Sigma)
$$
where 
$$\mu = -\sqrt{\lambda}\frac{\beta_0 - \alpha_0}{2}\left[\int_{-1}^{1} K'\left(t\right)|t| \ dt \right] \int_{\mathbb{R}^{p-1}}\tilde{Q} f'(0 | \tilde{Q}) \ dP(\tilde{Q})
$$ 
and 
$$\Sigma = \left[a_1 \int_{-1}^{0} \left(K'\left(t\right)\right)^2  \ dt + a_2 \int_{0}^{1} \left(K'\left(t\right)\right)^2 \ dt \right]\int_{\mathbb{R}^{p-1}}\tilde{Q}\tilde{Q}^{\top} f(0|\tilde{Q})  \ dP(\tilde{Q}) \,.
$$ 
Here $a_1 = (1 - \gamma)^2 \alpha_0 + \gamma^2 (1-\alpha_0), a_2 = (1 - \gamma)^2 \beta_0 + \gamma^2 (1-\beta_0)$ and $\alpha_0, \beta_0, \gamma$ are model parameters defined around equation \eqref{eq:new_loss}. 
\end{lemma}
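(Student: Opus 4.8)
The plan is to treat $\sqrt{n\sigma_n}\,T_n(\psi_0)$ as a normalized sum of i.i.d. mean-zero-plus-bias terms and invoke a triangular-array CLT (Lindeberg–Feller, or Lemma 6 of \cite{horowitz1992smoothed} as the paper does elsewhere). Write
\[
\sqrt{n\sigma_n}\,T_n(\psi_0) = -\frac{1}{\sqrt{n\sigma_n}}\sum_{i=1}^n (Y_i-\gamma) K'\!\left(\frac{Q_i^\top\psi_0}{\sigma_n}\right)\tilde Q_i =: \frac{1}{\sqrt{n}}\sum_{i=1}^n \xi_{n,i},
\]
where $\xi_{n,i} = -\sigma_n^{-1/2}(Y_i-\gamma)K'(Q_i^\top\psi_0/\sigma_n)\tilde Q_i$ are i.i.d. for each fixed $n$. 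Three ingredients are needed: (i) $\mathbb{E}[\xi_{n,1}] \to \mu$; (ii) $\mathrm{Var}(\xi_{n,1}) \to \Sigma$ (equivalently $\mathbb{E}[\xi_{n,1}\xi_{n,1}^\top]\to\Sigma$ since the mean contributes at lower order to the second moment once divided by $n$); (iii) a Lindeberg condition, which will follow from a uniform bound on, say, $\mathbb{E}\|\xi_{n,1}\|^{2+\epsilon} = o(n^{\epsilon/2})$, using that $K'$ is bounded with compact effective support and the moment assumptions (Assumptions \ref{as:density_bound}, \ref{as:derivative_bound}) on $m(\tilde Q)$ and on $\|\tilde Q\|$.

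The core computation is step (i) and step (ii), both of which proceed by the same change-of-variables device used repeatedly in the paper: condition on $\tilde Q$, write the inner integral over $t = Q^\top\psi_0$ against the conditional density $f_0(t\mid\tilde Q)$, substitute $t = \sigma_n s$, and Taylor-expand $f_0(\sigma_n s\mid\tilde Q) = f_0(0\mid\tilde Q) + \sigma_n s\, f_0'(0\mid\tilde Q) + o(\sigma_n)$. For the mean, the key observation is that $\mathbb{E}[Y-\gamma\mid Q] = \alpha_0 - \gamma$ on $\{Q^\top\psi_0\le 0\}$ and $\beta_0-\gamma$ on $\{Q^\top\psi_0>0\}$; with $\gamma = (\alpha_0+\beta_0)/2$ these are $\mp(\beta_0-\alpha_0)/2$. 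Hence the leading $f_0(0\mid\tilde Q)$ term in the expansion integrates $K'(s)\,\mathrm{sign}(-s)$, which vanishes by symmetry of $K'$, and the \emph{first surviving term} is the $\sigma_n s f_0'(0\mid\tilde Q)$ piece, giving
\[
\mathbb{E}[\xi_{n,1}] = -\sigma_n^{3/2}\cdot\frac{\beta_0-\alpha_0}{2}\left(\int K'(s)|s|\,ds\right)\int \tilde q\, f_0'(0\mid\tilde q)\,dP(\tilde q) + o(\sigma_n^{3/2}).
\]
Multiplying by $\sqrt{n}$ and using $n\sigma_n^3\to\lambda$ yields $\mu$. For the variance, $\mathbb{E}[(Y-\gamma)^2\mid Q]$ equals $a_1 := (1-\gamma)^2\alpha_0 + \gamma^2(1-\alpha_0)$ on the left of the hyperplane and $a_2$ on the right; the change of variables then gives $\mathbb{E}[\xi_{n,1}\xi_{n,1}^\top] = \left(a_1\int_{-\infty}^0 (K'(s))^2 ds + a_2\int_0^\infty (K'(s))^2 ds\right)\mathbb{E}[\tilde Q\tilde Q^\top f_0(0\mid\tilde Q)] + o(1)$, which is $\Sigma$ (the compact-support convention on $K$ turns the infinite limits into $\pm 1$). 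One must also check the outer-product-of-means term is $O(\sigma_n^3) = o(1/n)\cdot n$, hence negligible in the variance.

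The main obstacle is not conceptual but bookkeeping: controlling the remainder terms in the Taylor expansion uniformly in $\tilde Q$ so that the $o(\sigma_n^{3/2})$ (for the mean) and $o(1)$ (for the variance) claims are legitimate. This requires the integrability of $m(\tilde Q)$, $h(\tilde Q)$ against $\|\tilde Q\|$ and $\|\tilde Q\|^2$ supplied by Assumptions \ref{as:density_bound}--\ref{as:derivative_bound}, together with dominated convergence; the argument is the same in spirit as the remainder analyses in the proof of Lemma \ref{bandwidth} and Lemma \ref{lem:pop_smooth_curvarture}, so I would cite that pattern rather than repeat it in full. A secondary point to be careful about is that $\mu$ is a genuine (possibly nonzero) bias vector, so the limiting law is $\mathcal N(\mu,\Sigma)$ rather than centered; this is exactly why the bandwidth regime $n\sigma_n^3\to\lambda$ (rather than $\to 0$ or $\to\infty$) is singled out in the hypothesis, and in the regime actually used for Theorem \ref{thm:binary}, namely $n\sigma_n^2\to 0$, one has $\lambda = 0$ and the bias disappears.
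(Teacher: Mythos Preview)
Your proposal is correct and follows essentially the same route as the paper's proof: compute the mean of the summand via the change of variables $t\mapsto\sigma_n s$ and a one-term Taylor expansion of $f_0(\cdot\mid\tilde Q)$ (the leading term cancels by evenness of $K'$, leaving the $\sigma_n^{3/2}$ bias), compute the second moment the same way to get $\Sigma$, and invoke the triangular-array CLT (the paper cites Lemma~6 of \cite{horowitz1992smoothed}). One small slip: in your itemization you wrote $\mathbb{E}[\xi_{n,1}]\to\mu$, but as your own computation makes clear the correct statement is $\sqrt{n}\,\mathbb{E}[\xi_{n,1}]\to\mu$; your Lyapunov check $\mathbb{E}\|\xi_{n,1}\|^{2+\epsilon}=O(\sigma_n^{-\epsilon/2})=o(n^{\epsilon/2})$ under $n\sigma_n\to\infty$ is a welcome addition the paper leaves implicit.
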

\noindent
In the case that $n \sigma_n^3 \rightarrow 0$, which, holds when $n\sigma_n \rightarrow 0$ as assumed prior to the statement of the theorem, $\lambda = 0$ and we have: 
$$\sqrt{n \sigma_n} T_n(\psi_0) \rightarrow \mathcal{N}(0, \Sigma) \,.$$ 
Next, we analyze the convergence of $Q_n(\psi^*_n)^{-1}$ which is stated in the following lemma: 
\begin{lemma}[Convergence in Probability of $Q_n$]
\label{conv-prob}
Under Assumptions (\ref{as:distribution} - \ref{as:eigenval_bound}), for any random sequence $\breve{\psi}_n$ such that $\|\breve{\psi}_n - \psi_0\|/\sigma_n \overset{P} \rightarrow 0$, 
$$
\sigma_n Q_n(\breve{\psi}_n) \overset{P} \rightarrow Q = \frac{\beta_0 - \alpha_0}{2}\left(\int_{-1}^{1} -K''\left(t \right)\s(t) \ dt\right) \ \bbE\left(\tilde{Q}\tilde{Q}^{\top} f(0 |\tilde{Q})\right) \,.
$$
\end{lemma}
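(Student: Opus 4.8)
The plan is to treat $\sigma_n Q_n(\psi) = -\frac{1}{n\sigma_n}\sum_{i=1}^n (Y_i-\gamma)K''\!\big(Q_i^{\top}\psi/\sigma_n\big)\tilde Q_i\tilde Q_i^{\top}$ as an empirical average, namely $\bbP_n g_{n,\psi}$ with $g_{n,\psi}(y,q)=-\tfrac1{\sigma_n}(y-\gamma)K''(q^{\top}\psi/\sigma_n)\tilde q\tilde q^{\top}$, and to show that it concentrates around the deterministic limit $Q$ uniformly over a shrinking neighbourhood of $\psi_0$. Since $\|\breve\psi_n-\psi_0\|/\sigma_n\overset{P}{\to}0$, for every $\epsilon>0$ the event $A_{n,\epsilon}=\{\|\breve\psi_n-\psi_0\|\le\epsilon\sigma_n\}$ has probability tending to one, and on $A_{n,\epsilon}$ one has $\|\sigma_n Q_n(\breve\psi_n)-Q\|\le \sup_{\|\psi-\psi_0\|\le\epsilon\sigma_n}\|\sigma_n Q_n(\psi)-Q\|$ (the supremum being over $\psi$ with first coordinate one). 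So it suffices to show this supremum is $C\epsilon+o(1)+o_P(1)$ and then let $\epsilon\downarrow0$; I would split it into a bias part $\sup_\psi\|\bbE[\sigma_n Q_n(\psi)]-Q\|$ and a fluctuation part $\sup_\psi\|(\bbP_n-P)g_{n,\psi}\|$.

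For the bias part I would use that, by the model \eqref{eq:classification_eqn} and the choice $\gamma=(\alpha_0+\beta_0)/2$, $\bbE[(Y-\gamma)\mid Q]=\tfrac{\beta_0-\alpha_0}{2}\,\s(Q^{\top}\psi_0)$. Parametrising $\psi=\psi_0+\sigma_n(0,\eta)$ with $\|\eta\|\le\epsilon$, conditioning on $\tilde Q$, and substituting $Q^{\top}\psi_0=\sigma_n s$ gives
\[
\bbE\big[\sigma_n Q_n(\psi)\mid\tilde Q\big]=-\tfrac{\beta_0-\alpha_0}{2}\Big(\int \s(s)\,K''(s+\tilde Q^{\top}\eta)\,f_0(\sigma_n s\mid\tilde Q)\,ds\Big)\tilde Q\tilde Q^{\top}.
\]
By Assumption \ref{as:differentiability} and a first order expansion of $f_0$ (Assumption \ref{as:derivative_bound}) and of $K''$, this differs from $-\tfrac{\beta_0-\alpha_0}{2}f_0(0\mid\tilde Q)\big(\int\s(s)K''(s)\,ds\big)\tilde Q\tilde Q^{\top}$ by $O(\epsilon)+o(1)$ after multiplying by an integrable function of $\tilde Q$; taking $\bbE_{\tilde Q}$ by dominated convergence (Assumptions \ref{as:density_bound}–\ref{as:eigenval_bound} supplying the integrable dominating functions) yields exactly $Q$, since $\int -\s(t)K''(t)\,dt$ is the constant appearing in the statement and $f_0(0\mid\tilde Q)=f(0\mid\tilde Q)$. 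All of these manipulations are of the same type already carried out in the proof of Lemma \ref{bandwidth}.

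For the fluctuation part I would bound $\sup_{\mathcal G_n}|(\bbP_n-P)g|$ for the class $\mathcal G_n=\{g_{n,\psi}:\|\psi-\psi_0\|\le\epsilon\sigma_n\}$. Each coordinate of $\mathcal G_n$ is a bounded-variation transformation of the finite-dimensional class $\{q\mapsto q^{\top}\psi\}$, so $\mathcal G_n$ has uniform entropy integral bounded independently of $n$. The crucial point is the envelope size: $|K''(q^{\top}\psi/\sigma_n)|$ is non-negligible only on $\{Q^{\top}\psi_0=O(\sigma_n)\}$, an event of probability $O(\sigma_n)$, so the envelope $F_n$ of $\mathcal G_n$ satisfies $\bbE[F_n^2]=O(\sigma_n^{-2})\cdot O(\sigma_n)=O(\sigma_n^{-1})$ (using $|Y-\gamma|\le1$ and the moment/density bounds of Assumptions \ref{as:density_bound}, \ref{as:eigenval_bound}). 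Lemma 2.14.1 of \cite{vdvw96} then gives $\bbE\sup_{\mathcal G_n}|(\bbP_n-P)g|\lesssim\sqrt{\bbE[F_n^2]/n}=O\big(1/\sqrt{n\sigma_n}\big)=o(1)$, since $n\sigma_n\to\infty$; the same $O(\sigma_n)$ second-moment computation also gives $\sigma_n Q_n(\psi_0)-\bbE[\sigma_n Q_n(\psi_0)]=o_P(1)$ by Chebyshev. Combining bias and fluctuation and letting $\epsilon\downarrow0$ completes the argument.

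The step I expect to be the main obstacle is precisely the envelope/entropy bookkeeping in the fluctuation part: one must extract a full factor $\sigma_n$ in $\bbE[F_n^2]$ (rather than the naive $O(\sigma_n^{-2})$), so that the $\sigma_n^{-1}$ prefactor built into $g_{n,\psi}$ is absorbed and the fluctuation vanishes under the mild condition $n\sigma_n\to\infty$. This is the part that mirrors, and has to be adapted from, Lemma 6 of \cite{seo2007smoothed} (and ultimately the smoothing arguments of \cite{horowitz1992smoothed}); everything else reduces to routine changes of variables already exhibited in the proof of Lemma \ref{bandwidth}.
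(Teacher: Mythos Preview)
Your proposal is correct and follows the same bias-plus-fluctuation decomposition as the paper. The only differences are in execution: for the bias term the paper inserts an explicit truncation $\chi_n=\{\|\tilde Q\|\le 1/\sqrt{\epsilon_n}\}$ before passing to the limit via DCT, and for the fluctuation term the paper centers the class at $\psi_0$ and appeals to a uniform law of large numbers for BUEI classes (Theorems 2.4.1 and 2.6.7 of \cite{vdvw96}), whereas you apply the maximal inequality (Lemma 2.14.1) directly to the uncentered class with the envelope bound $\bbE[F_n^2]=O(\sigma_n^{-1})$, obtaining the explicit rate $O((n\sigma_n)^{-1/2})$---which makes the role of the condition $n\sigma_n\to\infty$ more transparent but is otherwise the same argument.
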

It will be shown later that the condition $\|\breve{\psi}_n - \psi_0\|/\sigma_n \overset{P} \rightarrow 0$ needed in Lemma \ref{conv-prob} holds for the (random) sequence $\psi^*_n$. Then, combining Lemma \ref{asymp-normality} and Lemma \ref{conv-prob} we conclude from equation \ref{eq:main_eq} that: 
$$
\sqrt{n/\sigma_n} \left(\hat{\psi}^s - \psi_0\right) \Rightarrow N(0, Q^{-1}\Sigma Q^{-1}) \,.
$$ 
This concludes the proof of the our Theorem \ref{thm:binary} with $\Gamma = Q^{-1}\Sigma Q^{-1}$. 
\newline
\newline
Observe that, to show $\left\|\psi^*_n - \psi_0 \right\| = o_P(\sigma_n)$, it suffices to to prove that $\left\|\hat \psi^s - \psi_0 \right\| = o_P(\sigma_n)$. Towards that direction, we have following lemma: 

\begin{lemma}[Rate of convergence]
\label{lem:rate}
Under Assumptions (\ref{as:distribution} - \ref{as:eigenval_bound}), 
$$
n^{2/3}\sigma_n^{-1/3}  d^2_n\left(\hat \psi^s, \psi_0^s\right) = O_P(1) \,,
$$ 
where 
$$
d_n\left(\psi, \psi_0^s\right) = \sqrt{\left[\frac{\|\psi - \psi_0^s\|^2}{\sigma_n} \mathds{1}(\|\psi - \psi_0^s\| \le \cK\sigma_n) + \|\psi - \psi_0^s\|  \mathds{1}(\|\psi - \psi_0^s\| \ge \cK\sigma_n)\right]}
$$
for some specific constant $\cK$. (This constant will be mentioned precisely in the proof). 
\end{lemma}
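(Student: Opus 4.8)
The plan is to mirror the proof of Lemma \ref{lem:rate_smooth} for the continuous-response model, using crucially that in the binary model the ``variance'' factor $Y-\gamma$ is bounded, which makes the envelope computations cleaner. As there, the argument has two halves: a curvature bound for the smoothed population score $\bbM^s$ around $\psi_0^s$, and a bound on the modulus of continuity of the centred empirical process $(\bbP_n-P)(f_\psi - f_{\psi_0^s})$ with $f_\psi(Y,Q) = (Y-\gamma)(1-K(Q^\top\psi/\sigma_n))$; the rate then follows from the standard peeling/rate theorem (Theorem 3.2.5 of \cite{vdvw96}).

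First I would assemble the population ingredients, exactly as in Lemmas \ref{lem:pop_curv_nonsmooth}--\ref{lem:pop_smooth_curvarture} and \ref{bandwidth}. (i) Curvature of the non-smoothed score: a direct computation gives $\bbM(\psi) - \bbM(\psi_0) = \tfrac{\beta_0-\alpha_0}{2}\,\bbP(\s(Q^\top\psi)\neq\s(Q^\top\psi_0))$, which under Assumptions \ref{as:distribution} and \ref{as:differentiability} is comparable to $\|\psi-\psi_0\|$ on a fixed neighbourhood of $\psi_0$, since the mass of the wedge between two hyperplanes is of the order of the angle between them whenever the relevant conditional density is bounded away from $0$ and $\infty$ near the origin. (ii) Uniform closeness: $\sup_\psi|\bbM^s(\psi) - \bbM(\psi)| \lesssim \sigma_n$, since $|Y-\gamma|$ is bounded and $\int|K(t)-\mathds{1}_{t>0}|\,dt<\infty$. (iii) Convergence of the population minimizer $\|\psi_0^s-\psi_0\|/\sigma_n\to 0$: from (i)+(ii) one first gets $\|\psi_0^s-\psi_0\| = O(\sigma_n)$; then, writing the first-order condition $\nabla\bbM^s(\psi_0^s)=0$, changing variables $Q^\top\psi_0=\sigma_n t$, and passing to a limit along a subsequence with $(\tilde\psi_0^s-\tilde\psi_0)/\sigma_n\to h$, one obtains $\bbE[\tilde Q\,f_0(0\mid\tilde Q)(2K(\tilde Q^\top h)-1)]=0$; taking the inner product with $h$ and invoking the symmetry of $K$ and $f_0(0\mid\tilde Q)>0$ forces $h=0$. (iv) Combining (i)--(iii) yields the smoothed curvature $\bbM^s(\psi)-\bbM^s(\psi_0^s)\gtrsim d_n^2(\psi,\psi_0^s)$: the regime $\|\psi-\psi_0^s\|>\cK\sigma_n$ by sandwiching $\bbM^s$ between $\bbM\pm K_1\sigma_n$ and applying (i) with $\cK$ chosen large; the regime $\|\psi-\psi_0^s\|\le\cK\sigma_n$ by a two-term Taylor expansion, for which $\sigma_n\nabla^2\bbM^s$ is bounded below uniformly over that ball by a positive-definite matrix (the binary analogue of the computation in Lemma \ref{conv-prob}, with positive-definiteness coming from $f_{\tilde Q}(0)>0$ and $\lambda_{\min}\bbE[\tilde Q\tilde Q^\top f_0(0\mid\tilde Q)]>0$, i.e.\ Assumption \ref{as:eigenval_bound}).

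Next I would bound the modulus of continuity of $\cF_\zeta=\{f_\psi-f_{\psi_0^s}:d_n(\psi,\psi_0^s)\le\zeta\}$. The class $\{Q\mapsto K(Q^\top\psi/\sigma_n)\}$ is the composition of the VC-subgraph class of halfspaces $Q\mapsto Q^\top\psi$ (whose VC index is unchanged under scaling by $\sigma_n$) with the monotone $K$, hence has bounded uniform entropy integral independent of $n$; multiplying by the bounded factor $Y-\gamma$ and taking differences preserves this, so $\cJ(1,\cF_\zeta)=O(1)$. For the envelope, $|f_\psi-f_{\psi_0^s}|\le|Y-\gamma|\,|K(Q^\top\psi/\sigma_n)-K(Q^\top\psi_0^s/\sigma_n)|\lesssim\sup_{d_n(\psi,\psi_0^s)\le\zeta}|K(Q^\top\psi/\sigma_n)-K(Q^\top\psi_0^s/\sigma_n)|=:F_\zeta$. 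Computing $\bbE[F_\zeta^2]$ splits into two regimes as in the continuous proof: when $\zeta\le\sqrt{\cK\sigma_n}$ the supremum over $\psi$ is attained at $\|\psi-\psi_0^s\|=\zeta\sqrt{\sigma_n}$, and bounding $|K(\cdot)-K(\cdot+\|\tilde Q\|\zeta/\sqrt{\sigma_n})|^2\le|K(\cdot)-K(\cdot+\|\tilde Q\|\zeta/\sqrt{\sigma_n})|$, changing variables $Q^\top\psi_0^s=\sigma_n t$, and writing $f_s(\sigma_n t\mid\tilde q)=f_s(0\mid\tilde q)+O(\sigma_n|t|)$ gives $\bbE[F_\zeta^2]\lesssim\zeta\sqrt{\sigma_n}$, the remainder terms being of strictly smaller order exactly as $R_1,R_2$ before; when $\zeta>\sqrt{\cK\sigma_n}$ the supremum is attained at $\|\psi-\psi_0^s\|=\zeta^2$ and the analogous computation gives $\bbE[F_\zeta^2]\lesssim\zeta^2$.

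Finally, Lemma 2.14.1 of \cite{vdvw96} gives $\sqrt n\,\bbE[\sup_{d_n(\psi,\psi_0^s)\le\zeta}|(\bbP_n-P)(f_\psi-f_{\psi_0^s})|]\lesssim\phi_n(\zeta)$ with $\phi_n(\zeta)=\sqrt\zeta\,\sigma_n^{1/4}\mathds{1}_{\zeta\le\sqrt{\cK\sigma_n}}+\zeta\,\mathds{1}_{\zeta>\sqrt{\cK\sigma_n}}$, which has $\zeta\mapsto\phi_n(\zeta)/\zeta$ decreasing as needed. Solving $r_n^2\phi_n(1/r_n)\le\sqrt n$ and discarding the root that would force $n\sigma_n\to 0$ leaves $r_n=n^{1/3}\sigma_n^{-1/6}$, and combining with the curvature bound of the first step yields $n^{2/3}\sigma_n^{-1/3}d_n^2(\hat\psi^s,\psi_0^s)=O_P(1)$. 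The main obstacle is the envelope computation $\bbE[F_\zeta^2]$ with its two-regime dichotomy around $\psi_0^s$ and the careful tracking of the change-of-variables remainder terms; the second delicate point is the positive-definiteness of the limiting $\sigma_n\nabla^2\bbM^s$ needed for the near-regime curvature. Both are, however, close mirrors of arguments already carried out for the continuous-response model, with the boundedness of $Y-\gamma$ removing the moment bookkeeping that was needed there.
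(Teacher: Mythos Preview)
Your proposal is correct and follows essentially the same route as the paper's proof: linear curvature of the unsmoothed score (the paper's Lemma \ref{lem:linear_curvature}), uniform $O(\sigma_n)$ closeness of $\bbM^s$ to $\bbM$, the subsequence/first-order-condition argument for $\|\psi_0^s-\psi_0\|/\sigma_n\to 0$, the two-regime curvature bound for $\bbM^s-\bbM^s(\psi_0^s)$ via sandwiching and Taylor expansion, and the two-regime envelope computation yielding the same $\phi_n(\zeta)$ and hence $r_n=n^{1/3}\sigma_n^{-1/6}$. The only cosmetic discrepancy is the theorem number you cite for the rate theorem (3.2.5 versus the paper's 3.4.1 in \cite{vdvw96}), which does not affect the argument.
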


\noindent
The lemma immediately leads to the following corollary: 

\begin{corollary}
\label{rate-cor}
If $n\sigma_n \rightarrow \infty$ then $\|\hat \psi^s - \psi_0^s\|/\sigma_n  \overset{P}  \longrightarrow 0$.
\end{corollary}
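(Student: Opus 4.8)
The plan is to derive Corollary \ref{rate-cor} as a short consequence of Lemma \ref{lem:rate}, exactly in the spirit of the closing paragraph of the proof of Lemma \ref{lem:rate_smooth} in the continuous-response case. First I would record the elementary bandwidth comparison: since $n\sigma_n \to \infty$ we have $(n\sigma_n)^{2/3}\to\infty$, equivalently $n^{2/3}\sigma_n^{-1/3}\cdot\sigma_n\to\infty$, i.e.\ $n^{2/3}\sigma_n^{-1/3}\gg \sigma_n^{-1}$. Plugging this into the conclusion $n^{2/3}\sigma_n^{-1/3}d_n^2(\hat\psi^s,\psi_0^s)=O_P(1)$ of Lemma \ref{lem:rate} gives
\[
d_n^2(\hat\psi^s,\psi_0^s)=O_P\big(n^{-2/3}\sigma_n^{1/3}\big)=o_P(\sigma_n)\,.
\]

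Next I would convert this bound, which is stated through the piecewise quantity $d_n$, into a bound on the Euclidean distance $\|\hat\psi^s-\psi_0^s\|$. Let $A_n=\{\|\hat\psi^s-\psi_0^s\|>\cK\sigma_n\}$. On $A_n$, by the definition of $d_n$ we have $d_n^2(\hat\psi^s,\psi_0^s)=\|\hat\psi^s-\psi_0^s\|>\cK\sigma_n$; combined with $d_n^2(\hat\psi^s,\psi_0^s)=o_P(\sigma_n)$ this forces $\P(A_n)\to0$. On the complement $A_n^c$ we have $d_n^2(\hat\psi^s,\psi_0^s)=\|\hat\psi^s-\psi_0^s\|^2/\sigma_n$, so $\|\hat\psi^s-\psi_0^s\|^2/\sigma_n=o_P(\sigma_n)$, i.e.\ $\|\hat\psi^s-\psi_0^s\|/\sigma_n=o_P(1)$ on $A_n^c$. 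Since $A_n$ is asymptotically negligible and the desired convergence holds on $A_n^c$, we conclude $\|\hat\psi^s-\psi_0^s\|/\sigma_n\overset{P}{\longrightarrow}0$, which is the assertion of the corollary.

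Finally, I would note that there is no genuinely hard step at the level of the corollary: all the substance is inside Lemma \ref{lem:rate}, whose proof (given in the appendix) runs parallel to that of Lemma \ref{lem:rate_smooth} — establishing the curvature of the smoothed population criterion $\bbM^s$ around $\psi_0^s$, checking that the relevant class of score functions has bounded uniform entropy integral, bounding the modulus of continuity via Lemma 2.14.1 of \cite{vdvw96} with an envelope that exhibits the $\zeta\sqrt{\sigma_n}$-versus-$\zeta^2$ dichotomy, and then solving $r_n^2\phi_n(1/r_n)\lesssim\sqrt n$. The only mild subtlety in the corollary itself is the case split above, needed because $d_n$ behaves like $\|\cdot\|^2/\sigma_n$ near $\psi_0^s$ but like $\|\cdot\|$ away from it; the argument makes precise that the ``far'' regime is ruled out with probability tending to one exactly under $n\sigma_n\to\infty$.
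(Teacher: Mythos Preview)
Your proposal is correct and follows essentially the same route as the paper: both arguments extract $d_n^2(\hat\psi^s,\psi_0^s)=o_P(\sigma_n)$ from Lemma \ref{lem:rate} via the comparison $n^{2/3}\sigma_n^{-1/3}\gg\sigma_n^{-1}$ (equivalent to $n\sigma_n\to\infty$), and then split according to whether $\|\hat\psi^s-\psi_0^s\|$ lies above or below $\cK\sigma_n$ to pass from $d_n$ to the Euclidean distance. The paper carries the two indicator pieces through separately and checks that each implied rate dominates $\sigma_n^{-1}$, whereas you first rule out the far regime $A_n$ and then read off the near-regime bound; these are cosmetically different but logically identical.
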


\noindent
Finally, to establish $\|\hat \psi^s - \psi_0\|/\sigma_n \overset{P} \rightarrow 0$, all we need is that $\|\psi_0^s - \psi_0\|/\sigma_n \rightarrow 0$ as demonstrated in the following lemma:

\begin{lemma}[Convergence of population minimizer]
\label{bandwidth}
For any sequence of $\sigma_n \rightarrow 0$, we have: $\|\psi_0^s - \psi_0\|/\sigma_n \rightarrow 0$. 
\end{lemma}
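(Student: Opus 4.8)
The plan is to mirror the argument used for Lemma~\ref{bandwidth} in the continuous-response model, which is in fact a bit shorter here since there are no nuisance parameters $(\beta,\delta)$ to track. First I would record the two preliminary facts $\psi_0^s\to\psi_0$ and $\|\psi_0^s-\psi_0\|/\sigma_n=O(1)$. Consistency follows from $\bbM^s(\psi_0^s)\le\bbM^s(\psi_0)$ combined with the uniform bound $\sup_{\psi}|\bbM^s(\psi)-\bbM(\psi)|=O(\sigma_n)$ (the analogue of Lemma~\ref{lem:uniform_smooth}, proved the same way using that $t\mapsto|K(t)-\mathds{1}_{t\ge 0}|$ is integrable), together with $\psi_0$ being the unique minimiser of the continuous map $\bbM$. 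For the $O(1)$ rate, note that $\bbE[Y-\gamma\mid Q]=\tfrac{\beta_0-\alpha_0}{2}\,\s(Q^\top\psi_0)$, so $\bbM(\psi)-\bbM(\psi_0)=\tfrac{\beta_0-\alpha_0}{2}\,\bbP(\s(Q^\top\psi)\neq\s(Q^\top\psi_0))\gtrsim\|\psi-\psi_0\|$ in a neighbourhood of $\psi_0$; chaining this lower curvature bound with $\bbM(\psi_0^s)-\bbM(\psi_0)\le 2\sup_\psi|\bbM^s-\bbM|=O(\sigma_n)$ gives $\|\psi_0^s-\psi_0\|=O(\sigma_n)$.

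Next I would write down the first-order condition for $\psi_0^s$. Since $\psi_0^s$ minimises $\bbM^s(\psi)=\tfrac{\beta_0-\alpha_0}{2}\,\bbE\!\left[\s(Q^\top\psi_0)\bigl(1-K(Q^\top\psi/\sigma_n)\bigr)\right]$ over the identifiable parametrisation (first coordinate fixed at $1$), differentiating in $\tilde\psi$ and using $\tfrac{\beta_0-\alpha_0}{2}>0$ yields
\[
\bbE\!\left[\s(Q^\top\psi_0)\,\tilde Q\,K'\!\Big(\tfrac{Q^\top\psi_0^s}{\sigma_n}\Big)\right]=0 .
\]
Write $\psi_0^s=\psi_0+\sigma_n\eta_n$, where $\eta_n$ has vanishing first coordinate and $\tilde\eta_n=(\tilde\psi_0^s-\tilde\psi_0)/\sigma_n$, so that $Q^\top\psi_0^s/\sigma_n=Q^\top\psi_0/\sigma_n+\tilde Q^\top\tilde\eta_n$. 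Conditioning on $\tilde Q$, substituting $t=Q^\top\psi_0=\sigma_n s$, and dividing by $\sigma_n$, the condition becomes
\[
\int_{\mathbb R^{p-1}}\!\!\tilde q\int_{-\infty}^{\infty}\!\s(s)\,K'(s+\tilde q^\top\tilde\eta_n)\,f_0(\sigma_n s\mid\tilde q)\,ds\,dP(\tilde q)=0 .
\]

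Then I would run a subsequence argument. Since $\|\eta_n\|=O(1)$, along any subsequence there is a further subsequence with $\eta_n\to h$, and I claim $h=0$. Passing to the limit uses continuity of $f_0(\cdot\mid\tilde q)$ at $0$: the error from replacing $f_0(\sigma_n s\mid\tilde q)$ by $f_0(0\mid\tilde q)$ is bounded by $\sigma_n|s|\,h(\tilde q)$ via Assumption~\ref{as:derivative_bound}, and the resulting remainder is $O(\sigma_n)$ once one invokes the moment bounds in Assumptions~\ref{as:density_bound}--\ref{as:derivative_bound} together with integrability of $\|\tilde Q\|$; dominated convergence handles the main term. Using $\int_{-\infty}^{\infty}\s(s)K'(s+c)\,ds=1-2K(c)$, the limiting identity is
\[
\bbE\!\left[\tilde Q\,f_0(0\mid\tilde Q)\bigl(1-2K(\tilde Q^\top h)\bigr)\right]=0 .
\]
Taking the inner product with $h$ gives $\bbE\!\left[(\tilde Q^\top h)\,f_0(0\mid\tilde Q)\bigl(1-2K(\tilde Q^\top h)\bigr)\right]=0$; since $K=\Phi$ is strictly increasing with $K(0)=1/2$, the factor $(\tilde Q^\top h)\bigl(1-2K(\tilde Q^\top h)\bigr)$ is $\le 0$ almost surely, and $f_0(0\mid\tilde Q)>0$ a.s.\ by Assumption~\ref{as:differentiability}, so the integrand vanishes a.s., forcing $\tilde Q^\top h=0$ a.s.; then $h^\top\bbE[\tilde Q\tilde Q^\top f_0(0\mid\tilde Q)]h=0$, and Assumption~\ref{as:eigenval_bound} forces $h=0$. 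Hence every subsequence of $\eta_n$ has a further subsequence tending to $0$, so $\eta_n\to 0$, i.e.\ $\|\psi_0^s-\psi_0\|/\sigma_n\to 0$.

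The main obstacle is the quantitative bookkeeping in the limit passage: verifying that the remainder generated by expanding $f_0(\sigma_n s\mid\tilde q)$ around $s=0$ is genuinely $o(1)$ after division by $\sigma_n$ (this is where the integrability built into Assumptions~\ref{as:density_bound}--\ref{as:derivative_bound}, and finiteness of the relevant moments of $\tilde Q$, are used), and securing the preliminary $O(1)$ rate, which itself rests on the linear curvature of the non-smoothed population criterion near $\psi_0$.
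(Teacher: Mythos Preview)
Your proposal is correct and follows essentially the same line as the paper's own proof: the $O(1)$ bound for $\eta_n=(\psi_0^s-\psi_0)/\sigma_n$ via the linear curvature of $\bbM$ and the uniform $O(\sigma_n)$ approximation $|\bbM^s-\bbM|$, the first-order condition rewritten after the substitution $t=Q^\top\psi_0/\sigma_n$, the subsequence argument with DCT to pass to the limit, and the inner-product trick exploiting that $x(1-2K(x))\le 0$ to force the limit $h=0$. The only cosmetic difference is that the paper invokes DCT directly for the limit passage rather than isolating a Taylor remainder bounded via Assumption~\ref{as:derivative_bound}, and it appeals to the full-support condition in Assumption~\ref{as:distribution} (rather than the positive-definiteness in Assumption~\ref{as:eigenval_bound}) to conclude $h=0$ from $\tilde Q^\top h=0$ a.s.
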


\noindent
Hence the final roadmap is the following: Using Lemma \ref{bandwidth} and Corollary \ref{rate-cor} we establish that $\|\hat \psi^s - \psi_0\|/\sigma_n \rightarrow 0$ if $n\sigma_n \rightarrow \infty$. This, in turn, enables us to prove that $\sigma_n Q_n(\psi^*_n) \overset{P} \rightarrow Q$,which, along with Lemma \ref{asymp-normality}, establishes the main theorem. 

\begin{remark}
\label{rem:gamma}
In the above analysis, we have assumed knowledge of $\gamma$ in between $(\alpha_0, \beta_0)$. However, all our calculations go through if we replace $\gamma$ by its estimate (say $\bar Y$) with more tedious book-keeping. One way to simplify the calculations is to split the data into two halves, estimate $\gamma$ (via $\bar Y$) from the first half and then use it as a proxy for $\gamma$ in the second half of the data to estimate $\psi_0$. As this procedure does not add anything of interest to the core idea of our proof, we refrain from doing so here.  
\end{remark}

\subsection{Variant of quadratic loss function}
\label{loss_func_eq}
In this sub-section we argue why the loss function in \eqref{eq:new_loss} is a variant of the quadratic loss function for any $\gamma \in (\alpha_0, \beta_0)$. Assume that we know $\alpha_0, \beta_0$ and seek to estimate $\psi_0$. We start with an expansion of the quadratic loss function: 
\begin{align*}
& \bbE\left(Y - \alpha_0\mathds{1}_{Q^{\top}\psi \le 0} - \beta_0 \mathds{1}_{Q^{\top}\psi > 0}\right)^2  \\
& = \bbE\left(\bbE\left(Y - \alpha_0\mathds{1}_{Q^{\top}\psi \le 0} - \beta_0 \mathds{1}_{Q^{\top}\psi > 0}\right)^2 \ | X\right) \\
& = \bbE_{Q}\left(\bbE\left( Y^2 \mid Q \right) \right) + \bbE_{Q}\left(\alpha_0\mathds{1}_{Q^{\top}\psi \le 0} + \beta_0 \mathds{1}_{Q^{\top}\psi > 0}\right)^2 \\
& \qquad \qquad \qquad -2 \bbE_{Q}\left(\left(\alpha_0\mathds{1}_{Q^{\top}\psi \le 0} + \beta_0 \mathds{1}_{Q^{\top}\psi > 0}\right) \bbE(Y \mid Q)\right) \\
& = \bbE_Q\left(\bbE\left( Y \mid Q \right) \right) + \bbE_Q\left(\alpha_0\mathds{1}_{Q^{\top}\psi \le 0} + \beta_0 \mathds{1}_{Q^{\top}\psi > 0}\right)^2 \\
& \qquad \qquad \qquad -2 \bbE_Q\left(\left(\alpha_0\mathds{1}_{Q^{\top}\psi \le 0} + \beta_0 \mathds{1}_{Q^{\top}\psi > 0}\right) \bbE(Y \mid Q)\right) \\
\end{align*}
Since the first summand is just $\bbE Y$, it is irrelevant to the minimization. A cursory inspection shows that it suffices to minimize
\begin{align}
& \bbE\left(\left(\alpha_0\mathds{1}_{Q^{\top}\psi \le 0} + \beta_0 \mathds{1}_{Q^{\top}\psi > 0}\right) - \bbE(Y \mid Q)\right)^2 \notag\\
\label{eq:lse_1} & = (\beta_0 - \alpha_0)^2 \P\left(\s(Q^{\top}\psi) \neq \s(Q^{\top}\psi_0)\right)
\end{align}
On the other hand the loss we are considering is $\bbE\left((Y - \gamma)\mathds{1}_{Q^{\top}\psi \le 0}\right)$: 
\begin{align}
\label{eq:lse_2} \bbE\left((Y - \gamma)\mathds{1}_{Q^{\top}\psi \le 0}\right) & = (\beta_0 - \gamma)\P(Q^{\top}\psi_0 > 0 , Q^{\top}\psi \le 0) \notag \\
& \hspace{10em}+ (\alpha_0 - \gamma)\P(Q^{\top}\psi_0 \le 0, Q^{\top}\psi \le 0)\,,
\end{align}
which can be rewritten as: 
\begin{align*}
& (\alpha_0 - \gamma)\P(X^{\top} \psi_0 \leq 0) + (\beta_0 - \gamma)\,\P(X^{\top} \psi_0 > 0, X^{\top} \psi \leq 0) \\
& \qquad \qquad \qquad + (\gamma - \alpha_0)\,P (X^{\top} \psi_0 \leq 0, X^{\top} \psi > 0) \,.
\end{align*}
By Assumption \ref{as:distribution}, for $\psi \neq \psi_0$, $\P\left(\s(Q^{\top}\psi) \neq \s(Q^{\top}\psi_0)\right) > 0$. As an easy consequence, equation \eqref{eq:lse_1} is uniquely minimized at $\psi = \psi_0$. To see that the same is true for \eqref{eq:lse_2} when $\gamma \in (\alpha_0, \beta_0)$, note that the first summand in the equation does not depend on $\psi$, that the second and third summands are both non-negative and that at least one of these must be positive under Assumption \ref{as:distribution}. 
\subsection{Linear curvature of the population score function}
Before going into the proofs of the Lemmas and the Theorem, we argue that the population score function $M(\psi)$ has linear curvature near $\psi_0$, which is useful in proving Lemma \ref{lem:rate}. We begin with the following observation: 
\begin{lemma}[Curvature of population risk]
\label{lem:linear_curvature}
Under Assumption \ref{as:differentiability} we have: $$u_- \|\psi - \psi_0\|_2 \le \bbM(\psi) - \bbM(\psi_0) \le u_+ \|\psi - \psi_0\|_2$$ for some constants $0 < u_- < u_+ < \infty$, for all $\psi \in \psi$. 
\end{lemma}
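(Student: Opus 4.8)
The plan is to reduce the statement to a two-sided linear bound on the overlap probability
$$p(\psi):=\P\big(\s(Q^{\top}\psi)\neq\s(Q^{\top}\psi_0)\big),$$
which plays here the role that Assumption 3 plays in the continuous-response model. Using the expansion of $\bbM(\psi)$ derived in Subsection \ref{loss_func_eq} and cancelling the single term that survives at $\psi=\psi_0$, one gets
$$\bbM(\psi)-\bbM(\psi_0)=(\beta_0-\gamma)\,\P\big(Q^{\top}\psi_0>0,\,Q^{\top}\psi\le 0\big)+(\gamma-\alpha_0)\,\P\big(Q^{\top}\psi_0\le 0,\,Q^{\top}\psi> 0\big).$$
Since $\gamma=(\alpha_0+\beta_0)/2$, both coefficients equal $(\beta_0-\alpha_0)/2>0$, so $\bbM(\psi)-\bbM(\psi_0)=\tfrac{\beta_0-\alpha_0}{2}\,p(\psi)$ (the boundary events $\{Q^{\top}\psi=0\}$ are null by Assumption \ref{as:distribution}). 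Hence it suffices to show $c_-\|\psi-\psi_0\|\le p(\psi)\le c_+\|\psi-\psi_0\|$.

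Next I would represent $p(\psi)$ by conditioning on $\tilde Q$. Since the first coordinates of $\psi$ and $\psi_0$ are both $1$, we have $Q^{\top}\psi-Q^{\top}\psi_0=\tilde Q^{\top}(\tilde\psi-\tilde\psi_0)=:b(\tilde Q)$, and the signs of $Q^{\top}\psi_0$ and $Q^{\top}\psi_0+b(\tilde Q)$ disagree exactly when $Q^{\top}\psi_0$ lies in the interval with endpoints $0$ and $-b(\tilde Q)$; hence, with $F_0(\cdot\mid\tilde Q)$ the conditional distribution function of $Q^{\top}\psi_0$,
$$p(\psi)=\bbE\big[\,\big|F_0(0\mid\tilde Q)-F_0(-b(\tilde Q)\mid\tilde Q)\big|\,\big].$$
The upper bound is then immediate: the density $f_0(\cdot\mid\tilde Q)$ is at most $m(\tilde Q)$ (Assumption \ref{as:density_bound}), so the integrand is at most $m(\tilde Q)|b(\tilde Q)|\le m(\tilde Q)\|\tilde Q\|\,\|\psi-\psi_0\|$, and $\bbE[m(\tilde Q)\|\tilde Q\|]<\infty$ by Cauchy--Schwarz, $\bbE[m(\tilde Q)^2]<\infty$, and $\bbE\|\tilde Q\|^2<\infty$; the last follows since $\bbE[\tilde Q\tilde Q^{\top}f_0(0\mid\tilde Q)]$ is finite (Assumption \ref{as:eigenval_bound}) while $f_0(0\mid\tilde Q)\ge t>0$ (Assumption \ref{as:differentiability}).

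For the lower bound --- the crux --- I would restrict to $\|\psi-\psi_0\|\le r$ for a small $r$ to be chosen, and use the uniform bound $f_0(\cdot\mid\tilde Q)\ge t$ on $[-\delta,\delta]$ from Assumption \ref{as:differentiability}: on $\{|b(\tilde Q)|\le\delta\}$ the interval of integration sits inside $[-\delta,\delta]$, so
$$p(\psi)\ge t\,\bbE\big[|b(\tilde Q)|\,\mathds{1}\{|b(\tilde Q)|\le\delta\}\big]=t\,\|\psi-\psi_0\|\;\bbE\big[|\tilde Q^{\top}v|\,\mathds{1}\{|\tilde Q^{\top}v|\le \delta/\|\psi-\psi_0\|\}\big],$$
where $v=(\tilde\psi-\tilde\psi_0)/\|\psi-\psi_0\|$ is a unit vector. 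As $\|\psi-\psi_0\|\le r$, it remains to show $\inf_{\|v\|=1}\bbE\big[|\tilde Q^{\top}v|\,\mathds{1}\{|\tilde Q^{\top}v|\le\delta/r\}\big]>0$. The map $v\mapsto\bbE[|\tilde Q^{\top}v|\,\mathds{1}\{|\tilde Q^{\top}v|<\delta/r\}]$ is lower semicontinuous on the compact unit sphere by Fatou, so its infimum is attained, and it is strictly positive because for each unit $v$ the support condition in Assumption \ref{as:distribution} (the support of $Q$, hence of $\tilde Q$, contains a neighborhood of the origin) forces $\P(0<\tilde Q^{\top}v<\delta/r)>0$ once $r$ is small enough that $\delta/r$ exceeds the radius of that neighborhood. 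This gives $p(\psi)\ge t c_0\|\psi-\psi_0\|$ near $\psi_0$. To promote the lower bound to all of $\Theta$, one uses that $\psi\mapsto p(\psi)$ is continuous and, by Subsection \ref{loss_func_eq}, strictly positive for $\psi\neq\psi_0$, hence bounded below by a positive constant off the neighborhood where $\|\psi-\psi_0\|$ is anyway bounded; shrinking $c_-$ closes the gap. The main obstacle is precisely the non-degeneracy/compactness step $\inf_{\|v\|=1}\bbE[|\tilde Q^{\top}v|\mathds{1}\{|\tilde Q^{\top}v|\le\delta/r\}]>0$, which genuinely needs Assumptions \ref{as:distribution} and \ref{as:eigenval_bound} to prevent $\tilde Q$ from concentrating on a proper subspace or escaping the truncation window, together with minor care about the discontinuity of the indicator.
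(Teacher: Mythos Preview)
Your argument is correct and follows the same overall arc as the paper's proof: reduce $\bbM(\psi)-\bbM(\psi_0)$ to $\tfrac{\beta_0-\alpha_0}{2}\,p(\psi)$, condition on $\tilde Q$ to write $p(\psi)$ as an expected increment of a conditional distribution function, bound above by the density envelope $m(\tilde Q)$, and bound below using the uniform density lower bound $t$ on $[-\delta,\delta]$ from Assumption~\ref{as:differentiability} together with a compactness argument on the unit sphere.

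The one genuine difference is in how the lower bound is organized. You truncate on the $\psi$-dependent event $\{|b(\tilde Q)|\le\delta\}$, which forces a two-stage argument: first establish the linear lower bound for $\|\psi-\psi_0\|\le r$, then patch the complement using continuity and strict positivity of $p(\psi)$ on the compact shell $\{\psi\in\Theta:\|\psi-\psi_0\|\ge r\}$. The paper instead truncates on the $\psi$-\emph{independent} event $\{\|\tilde Q\|\le\delta/\psi_{\max}\}$ with $\psi_{\max}=\sup_{\psi\in\Theta}\|\psi\|$; on this event both endpoints of the relevant interval lie in $[-\delta,\delta]$ regardless of $\psi$, so the bound $p(\psi)\ge t\,\|\psi-\psi_0\|\inf_{\|v\|=1}\bbE[|\tilde Q^{\top}v|\mathds{1}(\|\tilde Q\|\le\delta/\psi_{\max})]$ holds for every $\psi\in\Theta$ in one stroke, and the sphere infimum is shown positive by continuity. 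This is cleaner and avoids the second compactness step; on the other hand, your conditioning on the law of $Q^{\top}\psi_0$ (rather than $Q_1$) aligns more directly with how Assumption~\ref{as:differentiability} is stated, and your justification that $\bbE[m(\tilde Q)\|\tilde Q\|]<\infty$ via Cauchy--Schwarz and Assumptions~\ref{as:density_bound}, \ref{as:differentiability}, \ref{as:eigenval_bound} is more explicit than the paper's appeal to ``sub-Gaussianity of $\tilde X$,'' which is not among the listed assumptions.
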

\begin{proof}
First, we show that 
$$
\bbM(\psi) - \bbM(\psi_0) = \frac{(\beta_0 - \alpha_0)}{2} \P(\s(Q^{\top}\psi) \neq X^{\top}(\psi_0))
$$ which follows from the calculation below:
\begin{align*}
& \bbM(\psi) - \bbM(\psi_0)  \\
& = \bbE\left((Y - \gamma)\mathds{1}(Q^{\top}\psi \le 0)\right) - \bbE\left((Y - \gamma)\mathds{1}(Q^{\top}\psi_0 \le 0)\right) \\
& = \frac{\beta_0 - \alpha_0}{2} \bbE\left(\left\{\mathds{1}(Q^{\top}\psi \le 0) - \mathds{1}(Q^{\top}\psi_0 \le 0)\right\}\left\{\mathds{1}(Q^{\top}\psi_0 \ge 0) - \mathds{1}(Q^{\top}\psi_0 \le 0)\right\}\right) \\
& = \frac{\beta_0 - \alpha_0}{2} \bbE\left(\left\{\mathds{1}(Q^{\top}\psi \le 0, Q^{\top}\psi_0 \ge 0) - \mathds{1}(Q^{\top}\psi \le 0, Q^{\top}\psi_0 \le 0) + \mathds{1}(Q^{\top}\psi_0 \le 0)\right\}\right) \\
& = \frac{\beta_0 - \alpha_0}{2} \bbE\left(\left\{\mathds{1}(Q^{\top}\psi \le 0, Q^{\top}\psi_0 \ge 0) + \mathds{1}(Q^{\top}\psi \ge 0, Q^{\top}\psi_0 \le 0)\right\}\right) \\
& = \frac{\beta_0 - \alpha_0}{2} \P(\s(Q^{\top}\psi) \neq \s(Q^{\top}\psi_0)) \,.
\end{align*}
We now analyze the probability of the wedge shaped region, the region between the two hyperplanes $Q^{\top}\psi = 0$ and $Q^{\top}\psi_0 = 0$. Note that, 
\allowdisplaybreaks
\begin{align}
& \P(Q^{\top}\psi > 0 > Q^{\top}\psi_0) \notag\\
& = \P(-\tilde{Q}^{\top}\tilde{\psi} < X_1 < -\tilde{Q}^{\top}\tilde{\psi}_0) \notag\\
\label{lin1} & = \bbE\left[\left(F_{X_1 | \tilde{Q}}\left(-\tilde{Q}^{\top}\tilde{\psi}_0\right) - F_{X_1 | \tilde{Q}}\left(-\tilde{Q}^{\top}\tilde{\psi}\right)\right)\mathds{1}\left(\tilde{Q}^{\top}\tilde{\psi}_0 \le \tilde{Q}^{\top}\tilde{\psi}\right)\right]
\end{align}
A similar calculation yields
\allowdisplaybreaks
\begin{align}
\label{lin2}  \P(Q^{\top}\psi < 0 < Q^{\top}\psi_0) & = \bbE\left[\left(F_{X_1 | \tilde{Q}}\left(-\tilde{Q}^{\top}\tilde{\psi}\right) - F_{X_1 | \tilde{Q}}\left(-\tilde{Q}^{\top}\tilde{\psi}_0\right)\right)\mathds{1}\left(\tilde{Q}^{\top}\tilde{\psi}_0 \ge \tilde{Q}^{\top}\tilde{\psi}\right)\right]
\end{align}
Adding both sides of equation \ref{lin1} and \ref{lin2} we get: 
\begin{equation}
\label{wedge_expression}
\P(\s(Q^{\top}\psi) \neq \s(Q^{\top}\psi_0)) =  \bbE\left[\left|F_{X_1 | \tilde{Q}}\left(-\tilde{Q}^{\top}\tilde{\psi}\right) - F_{X_1 | \tilde{Q}}\left(-\tilde{Q}^{\top}\tilde{\psi}_0\right)\right|\right]
\end{equation}
Define $\psi_{\max} = \sup_{\psi \in \psi}\|\psi\|$, which is finite by Assumption \ref{as:distribution}. Below, we establish the lower bound: 
\allowdisplaybreaks
\begin{align*}
& \P(\s(Q^{\top}\psi) \neq \s(Q^{\top}\psi_0)) \notag\\
& =  \bbE\left[\left|F_{X_1 | \tilde{Q}}\left(-\tilde{Q}^{\top}\tilde{\psi}\right) - F_{X_1 | \tilde{Q}}\left(-\tilde{Q}^{\top}\tilde{\psi}_0\right)\right|\right] \\
& \ge  \bbE\left[\left|F_{X_1 | \tilde{Q}}\left(-\tilde{Q}^{\top}\tilde{\psi}\right) - F_{X_1 | \tilde{Q}}\left(-\tilde{Q}^{\top}\tilde{\psi}_0\right)\right|\mathds{1}\left(\left|\tilde{Q}^{\top}\tilde{\psi}\right| \vee \left| \tilde{Q}^{\top}\tilde{\psi}_0\right| \le \delta\right)\right] \hspace{0.2in} [\delta \ \text{as in Assumption \ref{as:differentiability}}]\\
& \ge  \bbE\left[\left|F_{X_1 | \tilde{Q}}\left(-\tilde{Q}^{\top}\tilde{\psi}\right) - F_{X_1 | \tilde{Q}}\left(-\tilde{Q}^{\top}\tilde{\psi}_0\right)\right|\mathds{1}\left(\|\tilde{Q}\| \le \delta/\psi_{\max}\right)\right] \\
& \ge  t \bbE\left[\left| \tilde{Q}^{\top}(\psi - \psi_0)\right| \mathds{1}\left(\|\tilde{Q}\| \le \delta/\psi_{\max}\right)\right] \\
& = t \|\psi - \psi_0\| \,\bbE\left[\left| \tilde{Q}^{\top}\frac{(\psi - \psi_0)}{\|\psi - \psi_0\|}\right| \mathds{1}\left(\|\tilde{Q}\| \le \delta/\psi_{\max}\right)\right] \\
& \ge t\|\psi - \psi_0\| \inf_{\gamma \in S^{p-1}}\bbE\left[\left| \tilde{Q}^{\top}\gamma\right| \mathds{1}\left(\|\tilde{Q}\| \le \delta/\psi_{\max}\right)\right] \\
& = u_-\|\psi - \psi_0\| \,.
\end{align*} 
At the very end, we have used the fact that $$\inf_{\gamma \in S^{p-1}}\bbE\left[\left| \tilde{Q}^{\top}\gamma\right| \mathds{1}\left(\|\tilde{Q}\| \le \delta/\psi_{\max}\right)\right] > 0$$ To prove this, assume that the infimum is 0. Then, there exists $\gamma_0 \in S^{p-1}$ such that 
$$\bbE\left[\left| \tilde{Q}^{\top}\gamma_0\right| \mathds{1}\left(\|\tilde{Q}\| \le \delta/\psi_{\max}\right)\right] = 0 \,,$$ 
as the above function continuous in $\gamma$ and any continuous function on a compact set attains its infimum. Hence, $\left|\tilde{Q}^{\top}\gamma_0 \right| = 0$ for all $\|\tilde{Q}\| \le \delta/\psi_{\max}$, which implies that $\tilde{Q}$ does not have full support, violating Assumption  \ref{as:distribution} (2). This gives a contradiction.
\\\\
\noindent
Establishing the upper bound is relatively easier. Going back to equation \eqref{wedge_expression}, we have: 
\begin{align*}
& \P(\s(Q^{\top}\psi) \neq \s(Q^{\top}\psi_0)) \notag\\
& =  \bbE\left[\left|F_{Q_1 | \tilde{Q}}\left(-\tilde{Q}^{\top}\tilde{\psi}\right) - F_{Q_1 | \tilde{Q}}\left(-\tilde{Q}^{\top}\tilde{\psi}_0\right)\right|\right] \\
& \le  \bbE\left[m(\tilde Q) \, \|Q\| \,\|\psi- \psi_0\|\right] \hspace{0.2in} [m(\cdot) \ \text{is defined in Assumption \ref{as:density_bound}}]\\
& \le u_+ \|\psi - \psi_0\| \,,
\end{align*}
as $ \bbE\left[m(\tilde Q) \|Q\|\right] < \infty$  by Assumption \ref{as:density_bound} and the sub-Gaussianity of $\tilde X$. 
\end{proof}

\subsection{Proof of Lemma \ref{asymp-normality}}
\begin{proof}
We first prove that under our assumptions $\sigma_n^{-1} \bbE(T_n(\psi_0)) \overset{n \to \infty}\longrightarrow A$ where $$A = -\frac{\beta_0 - \alpha_0}{2!}\left[\int_{-\infty}^{\infty} K'\left(t\right)|t| \ dt \right] \int_{\mathbb{R}^{p-1}}\tilde{Q}f_0'(0 | \tilde{Q}) \ dP(\tilde{Q})$$ The proof is based on Taylor expansion of the conditional density: 
\allowdisplaybreaks
\begin{align*}
& \sigma_n^{-1} \bbE(T_n(\psi_0)) \\
& =  -\sigma_n^{-2}\bbE\left((Y - \gamma)K'\left(\frac{Q^{\top}\psi_0}{\sigma_n}\right)\tilde{Q}\right) \\
& =  -\frac{\beta_0 - \alpha_0}{2}\sigma_n^{-2}\bbE\left(K'\left(\frac{Q^{\top}\psi_0}{\sigma_n}\right)\tilde{Q}(\mathds{1}(Q^{\top}\psi_0 \ge 0) - \mathds{1}(Q^{\top}\psi_0 \le 0))\right) \\
& =  -\frac{\beta_0 - \alpha_0}{2}\sigma_n^{-2}\int_{\mathbb{R}^{p-1}}\tilde{Q}\left[\int_{0}^{\infty} K'\left(\frac{z}{\sigma_n}\right)f_0(z|\tilde{Q}) \ dz - \int_{-\infty}^{0} K'\left(\frac{z}{\sigma_n}\right)f_0(z|\tilde{Q}) \ dz \right] \ dP(\tilde{Q}) \\
& =  -\frac{\beta_0 - \alpha_0}{2}\sigma_n^{-1}\int_{\mathbb{R}^{p-1}}\tilde{Q}\left[\int_{0}^{\infty} K'\left(t\right)f_0(\sigma_n t|\tilde{Q}) \ dt - \int_{-\infty}^{0} K'\left(t\right)f_0(\sigma_n t |\tilde{Q}) \ dt \right] \ dP(\tilde{Q}) \\
& =  -\frac{\beta_0 - \alpha_0}{2}\sigma_n^{-1}\left[\int_{\mathbb{R}^{p-1}}\tilde{Q}\left[\int_{0}^{\infty} K'\left(t\right)f_0(0|\tilde{Q}) \ dt - \int_{-\infty}^{0} K'\left(t\right)f_0(0 |\tilde{Q}) \ dt \right] \ dP(\tilde{Q}) \right. \\ 
& \qquad \qquad \qquad + \left. \int_{\mathbb{R}^{p-1}}\sigma_n \left[\int_{0}^{\infty} K'\left(t\right)tf_0'(\lambda \sigma_n t|\tilde{Q}) \ dt - \int_{-\infty}^{0} K'\left(t\right) t f_0'(\lambda \sigma_n t |\tilde{Q}) \ dt \right] \ dP(\tilde{Q}) \right] \hspace{0.2in} [0 < \lambda < 1]\\ 
& = -\frac{\beta_0 - \alpha_0}{2}\int_{\mathbb{R}^{p-1}}\tilde{Q}\left[\int_{0}^{\infty} k\left(t\right)tf_0'(\lambda \sigma_n t|\tilde{Q}) \ dz - \int_{-\infty}^{0} k\left(t\right)tf_0'(\lambda \sigma_nt |\tilde{Q}) \ dz \right] \ dP(\tilde{Q})\\
& \underset{n \rightarrow \infty} \longrightarrow -\frac{\beta_0 - \alpha_0}{2}\left[\int_{-\infty}^{\infty} k\left(t\right)|t| \ dt \right] \int_{\mathbb{R}^{p-1}}\tilde{Q}f_0'(0 | \tilde{Q}) \ dP(\tilde{Q})
\end{align*}
Next, we prove that $\mbox{Var}\left(\sqrt{n\sigma_n}T_n(\psi_0)\right)\longrightarrow \Sigma$ as $n \rightarrow \infty$, where $\Sigma$ is as defined in Lemma \ref{asymp-normality}. Note that: 
\allowdisplaybreaks
\begin{align*}
\mbox{Var}\left(\sqrt{n\sigma_n}T_n(\psi_0)\right) & = \sigma_n \bbE\left((Y - \gamma)^2\left(K'\left(\frac{Q^{\top}\psi_0}{\sigma_n}\right)^2\frac{\tilde{Q}\tilde{Q}^{\top}}{\sigma_n^2}\right)\right) - \sigma_n \bbE(T_n(\psi_0))\bbE(T_n(\psi_0))^{\top}
\end{align*}
As $\sigma_n^{-1}\bbE(T_n(\psi_0)) \rightarrow A$, we can conclude that $\sigma_n \bbE(T_n(\psi_0))\bbE(T_n(\psi_0))^{\top} \rightarrow 0$. 
Define $a_1 = (1 - \gamma)^2 \alpha_0 + \gamma^2 (1-\alpha_0), a_2 = (1 - \gamma)^2 \beta_0 + \gamma^2 (1-\beta_0)$. For the first summand: 
\allowdisplaybreaks
\begin{align*}
& \sigma_n \bbE\left((Y - \gamma)^2\left(K^{'^2}\left(\frac{Q^{\top}\psi_0}{\sigma_n}\right)\frac{\tilde{Q}\tilde{Q}^{\top}}{\sigma_n^2}\right)\right) \\
& = \frac{1}{\sigma_n} \int_{\mathbb{R}^{p-1}}\tilde{Q}\tilde{Q}^{\top} \left[a_1 \int_{-\infty}^{0} K^{'^2}\left(\frac{z}{\sigma_n}\right) f(z|\tilde{Q}) \ dz \right. \notag \\ & \left. \qquad \qquad \qquad + a_2 \int_{0}^{\infty}K^{'^2}\left(\frac{z}{\sigma_n}\right) f(z|\tilde{Q}) \ dz \right] \ dP(\tilde{Q})\\
& =  \int_{\mathbb{R}^{p-1}}\tilde{Q}\tilde{Q}^{\top} \left[a_1 \int_{-\infty}^{0} K^{'^2}\left(t\right)f(\sigma_n t|\tilde{Q}) \ dt + a_2 \int_{0}^{\infty} K^{'^2}\left(t\right) f(\sigma_n t |\tilde{Q}) \ dt \right] \ dP(\tilde{Q}) \\
& =  \int_{\mathbb{R}^{p-1}}\tilde{Q}\tilde{Q}^{\top} \left[a_1 \int_{-\infty}^{0} K^{'^2}\left(t\right)f(\sigma_n t|\tilde{Q}) \ dt + a_2 \int_{0}^{\infty} K^{'^2}\left(t\right) f(\sigma_n t |\tilde{Q}) \ dt \right] \ dP(\tilde{Q}) \\
& \underset{n \rightarrow \infty} \longrightarrow \left[a_1 \int_{-\infty}^{0} K^{'^2}\left(t\right)  \ dt + a_2 \int_{0}^{\infty} K^{'^2}\left(t\right) \ dt \right]\int_{\mathbb{R}^{p-1}}\tilde{Q}\tilde{Q}^{\top} f(0|\tilde{Q})  \ dP(\tilde{Q}) \ \ \overset{\Delta} = \Sigma \, . 
\end{align*}
Finally, suppose $n \sigma_n^{3} \rightarrow \lambda$. Define $W_n = \sqrt{n\sigma_n}\left[T_n(\psi) - \bbE(T_n(\psi))\right]$. Using Lemma 6 of Horowitz \cite{horowitz1992smoothed}, it is easily established that $W_n \Rightarrow N(0, \Sigma)$. Also, we have: 
\allowdisplaybreaks 
\begin{align*}
\sqrt{n\sigma_n}\bbE(T_n(\psi_0)) = \sqrt{n\sigma_n^{3}}\sigma_n^{-1}\bbE(T_n(\psi_0) & \rightarrow \sqrt{\lambda}A = \mu
\end{align*}
As $\sqrt{n\sigma_n}T_n(\psi_0) = W_n + \sqrt{n\sigma_n}\bbE(T_n(\psi_0))$, we conclude that $\sqrt{n\sigma_n} T_n(\psi_0) \Rightarrow N(\mu, \Sigma)$.
\end{proof}
\subsection{Proof of Lemma \ref{conv-prob}}
\begin{proof}
Let $\epsilon_n \downarrow 0$ be a sequence such that $\P(\|\breve{\psi}_n - \psi_0\| \le \epsilon_n \sigma_n) \rightarrow 1$. Define $\Psi_n = \{\psi: \|\psi - \psi_0\| \le \epsilon_n \sigma_n\}$. We show that $$\sup_{\psi \in \psi_n} \|\sigma_n Q_n(\psi) - Q\|_F \overset{P} \to 0$$ where $\|\cdot\|_F$ denotes the Frobenius norm of a matrix. Sometimes, we omit the subscript $F$ when there is no ambiguity. Define $\mathcal{G}_n$ to be collection of functions: 
$$
\mathcal{G}_n= \left\{g_{\psi}(y, q) = -\frac{1}{\sigma_n}(y - \gamma)\tilde q\tilde q^{\top} \left(K''\left(\frac{q^{\top}\psi}{\sigma_n}\right) - K''\left(\frac{q^{\top}\psi_0}{\sigma_n}\right)\right), \psi \in \Psi_n \right\}
$$
That the function class $\cG_n$ has bounded uniform entropy integral (BUEI) is immediate from the fact that the function $Q \to Q^{\top}\psi$ has finite VC dimension (as the hyperplanes has finite VC dimension) and it does change upon constant scaling. Therefore $Q \mapsto Q^{\top}\psi/\sigma_n$ also has finite VC dimension which does not depend on n and hence BUEI. As composition with a monotone function and multiplication with constant (parameter free) functions or multiplication of two BUEI class of functions keeps BUEI property, we conclude that $\cG_n$ has BUEI. 
We first expand the expression in two terms:
\allowdisplaybreaks
\begin{align*}
\sup_{\psi \in \psi_n} \|\sigma_n Q_n(\psi) - Q\| & \le \sup_{\psi \in \psi_n} \|\sigma_n Q_n(\psi) - \bbE(\sigma_n Q_n(\psi))\| + \sup_{\psi \in \psi_n} \| \bbE(\sigma_n Q_n(\psi)) - Q\| \\ 
& = \|(\mathbb{P}_n - P)\|_{\mathcal{G}_n} + \sup_{\psi \in \psi_n}\| \bbE(\sigma_n Q_n(\psi)) - Q\| \\
& = T_{1,n} + T_{2,n} \hspace{0.3in} \,. [\text{Say}]
\end{align*}

\vspace{0.2in}
\noindent
That $T_{1,n} \overset{P} \to 0$ follows from uniform law of large number of a BUEI class (e.g. combining Theorem 2.4.1 and Theorem 2.6.7 of \cite{vdvw96}). 
For uniform convergence of the second summand $T_{n,2}$, define $\chi_n = \{\tilde{Q}: \|\tilde{Q}\| \le 1/\sqrt{\epsilon_n}\}$. Then $\chi_n \uparrow \mathbb{R}^{p-1}$. Also for any $\psi \in \Psi_n$, if we define $\gamma_n \equiv \gamma_n(\psi) = (\psi - \psi_0)/\sigma_n$, then $|\tilde \gamma_n^{\top}\tilde{Q}| \le \sqrt{\epsilon_n}$ for all $n$ and for all $\psi \in \Psi_n, \tilde{Q} \in \chi_n$. Now, 
\allowdisplaybreaks
\begin{align*}
& \sup_{\psi \in \psi_n}\| \bbE(\sigma_n Q_n(\psi)) - Q\| \notag \\
&\qquad \qquad  = \sup_{\psi \in \psi_n}\| (\bbE(\sigma_n Q_n(\psi)\mathds{1}(\chi_n))-Q_1) +  (\bbE(\sigma_n Q_n(\psi)\mathds{1}(\chi_n^c))-Q_2)\|
\end{align*}
where $$Q_1 =  \frac{\beta_0 - \alpha_0}{2}\left(\int_{-\infty}^{\infty} -K''\left(t \right)\s(t) \ dt\right) \ \bbE\left(\tilde{Q}\tilde{Q}^{\top} f_0(0 |\tilde{Q})\mathds{1}(\chi_n) \right)$$ $$Q_2 =  \frac{\beta_0 - \alpha_0}{2}\left(\int_{-\infty}^{\infty} -K''\left(t \right)\s(t) \ dt\right) \ \bbE\left(\tilde{Q}\tilde{Q}^{\top} f(0 |\tilde{Q})\mathds{1}(X_n^c) \right) \,.$$
Note that 
\allowdisplaybreaks
\begin{flalign}
& \|\bbE(\sigma_n Q_n(\psi)\mathds{1}(\chi_n)) - Q_1\| \notag\\
& =\left\| \frac{\beta_0 - \alpha_0}{2}\left[\int_{\chi_n} \tilde{Q}\tilde{Q}^{\top} \left[\int_{-\infty}^{\tilde{Q}^{\top}\gamma_n} K''\left(t \right) f_0(\sigma_n (t-\tilde{Q}^{\top}\gamma_n) |\tilde{Q}) \ dt \right. \right. \right. \notag \\
& \left. \left. \left. \qquad \qquad - \int_{\tilde{Q}^{\top}\gamma_n}^{\infty} K''\left(t\right) f_0(\sigma_n (t - \tilde{Q}^{\top}\gamma_n) | \tilde{Q}) \ dt \right]dP(\tilde{Q})\right]\right. \notag\\ & \left. \qquad \qquad \qquad - \frac{\beta_0 - \alpha_0}{2}\left[\int_{\chi_n} \tilde{Q}\tilde{Q}^{\top} f(0 |\tilde{Q})\left[\int_{-\infty}^{0} K''\left(t \right) \ dt - \int_{0}^{\infty} K''\left(t\right) \ dt \right]dP(\tilde{Q})\right] \right \|\notag\\
& =\left \| \frac{\beta_0 - \alpha_0}{2}\left[\int_{\chi_n} \tilde{Q}\tilde{Q}^{\top} \left[\int_{-\infty}^{\tilde{Q}^{\top}\gamma_n} K'''\left(t \right) (f_0(\sigma_n (t-\tilde{Q}^{\top}\gamma_n) |\tilde{Q})-f_0(0 | \tilde{Q})) \ dt \right. \right. \right.\notag\\& \qquad \qquad- \left. \left. \left. \int_{\tilde{Q}^{\top}\gamma_n}^{\infty} K''\left(t\right) (f_0(\sigma_n (t - \tilde{Q}^{\top}\gamma_n) | \tilde{Q}) - f_0(0 | \tilde{Q})) \ dt \right]dP(\tilde{Q})\right]\right. \notag\\ & \qquad \qquad \qquad + \left. \frac{\beta_0 - \alpha_0}{2}\left[\int_{\chi_n} \tilde{Q}\tilde{Q}^{\top} f_0(0 |\tilde{Q}) \left[\int_{-\infty}^{\tilde{Q}^{\top}\gamma_n} K''\left(t \right) \ dt - \int_{-\infty}^{0} K''\left(t \right) \ dt \right. \right. \right. \notag \\ 
& \qquad \qquad \qquad \qquad \left. \left. \left. + \int_{\tilde{Q}^{\top}\gamma_n}^{\infty} K''\left(t \right) \ dt - \int_{0}^{\infty} K''\left(t\right) \ dt \right]dP(\tilde{Q})\right] \right \|\notag\\
& \le \frac{\beta_0 - \alpha_0}{2}\sigma_n  \int_{\chi_n}\|\tilde{Q}\tilde{Q}^{\top}\|h(\tilde{Q})\int_{-\infty}^{\infty}|K''(t)||t - \gamma_n^{\top}\tilde{Q}| \ dt \ dP(\tilde{Q}) \notag\\ & \qquad \qquad + \frac{\beta_0 - \alpha_0}{2}  \int_{\chi_n}\|\tilde{Q}\tilde{Q}^{\top}\| f_0(0 | \tilde{Q}) \left[\left| \int_{-\infty}^{\tilde{Q}^{\top}\gamma_n} K''\left(t \right) \ dt - \int_{-\infty}^{0} K''\left(t \right) \ dt \right| \right. \notag \\ & \left. \qquad \qquad \qquad + \left| \int_{\tilde{Q}^{\top}\gamma_n}^{\infty} K''\left(t \right) \ dt - \int_{0}^{\infty} K''\left(t\right) \ dt \right|\right] \ dP(\tilde{Q})\notag\\
&  \le \frac{\beta_0 - \alpha_0}{2}\left[\sigma_n  \int_{\chi_n}\|\tilde{Q}\tilde{Q}^{\top}\|h(\tilde{Q})\int_{-\infty}^{\infty}|K''(t)||t - \gamma_n^{\top}\tilde{Q}| \ dt \ dP(\tilde{Q}) \right. \notag \\ 
& \left. \qquad \qquad \qquad  + 2\int_{\chi_n}\|\tilde{Q}\tilde{Q}^{\top}\| f_0(0 | \tilde{Q}) (K'(0) - K'(\gamma_n^{\top}\tilde{Q})) \ dP(\tilde{Q})\right]\notag \\
\label{cp1}&\rightarrow 0 \hspace{0.3in} [\text{As} \ n \rightarrow \infty] \,,
\end{flalign}
by DCT and Assumptions \ref{as:distribution} and \ref{as:derivative_bound}. For the second part: 
\allowdisplaybreaks
\begin{align}
& \|\bbE(\sigma_n Q_n(\psi)\mathds{1}(\chi_n^c)) - Q_2\|\notag\\
& =\left\| \frac{\beta_0 - \alpha_0}{2}\left[\int_{\chi_n^c} \tilde{Q}\tilde{Q}^{\top} \left[\int_{-\infty}^{\tilde{Q}^{\top}\gamma_n} K''\left(t \right) f_0(\sigma_n (t-\tilde{Q}^{\top}\gamma_n) |\tilde{Q}) \ dt \right. \right. \right. \notag \\ 
& \left. \left. \left. \qquad \qquad - \int_{\tilde{Q}^{\top}\gamma_n}^{\infty} K''\left(t\right) f_0(\sigma_n (t - \tilde{Q}^{\top}\gamma_n) | \tilde{Q}) \ dt \right]dP(\tilde{Q})\right]\right. \notag\\ &  \left. \qquad \qquad \qquad -\frac{\beta_0 - \alpha_0}{2}\left[\int_{\chi_n^c} \tilde{Q}\tilde{Q}^{\top} f_0(0 |\tilde{Q})\left[\int_{-\infty}^{0} K''\left(t \right) \ dt - \int_{0}^{\infty} K''\left(t\right) \ dt \right]dP(\tilde{Q})\right] \right \|\notag\\
& \le \frac{\beta_0 - \alpha_0}{2} \int_{\infty}^{\infty} |K''(t)| \ dt \int_{\chi_n^c} \|\tilde{Q}\tilde{Q}^{\top}\|(m(\tilde{Q}) + f_0(0|\tilde{Q})) \ dP(\tilde{Q}) \notag\\
\label{cp2} & \rightarrow 0 \hspace{0.3in} [\text{As} \ n \rightarrow \infty] \,,
\end{align}
again by DCT and Assumptions \ref{as:distribution} and \ref{as:density_bound}. Combining equations \ref{cp1} and \ref{cp2}, we conclude the proof. 
\end{proof}

\subsection{Proof of Lemma \ref{bandwidth}}
Here we prove that $\|\psi^s_0 - \psi_0\|/\sigma_n \rightarrow 0$ where $\psi^s_0$ is the minimizer of $\bbM^s(\psi)$ and $\psi_0$ is the minimizer of $M(\psi)$. 
\begin{proof}
Define $\eta = (\psi^s_0 -  \psi_0)/\sigma_n$. At first we show that, $\|\tilde \eta\|_2$ is $O(1)$, i.e. there exists some constant $\Omega_1$ such that $\|\tilde \eta\|_2 \le \Omega_1$ for all $n$: 
\begin{align*}
\|\psi^s_0 - \psi_0\|_2 & \le \frac{1}{u_-} \left(\bbM(\psi_n) - \bbM(\psi_0)\right)  \hspace{0.2in} [\text{Follows from Lemma} \ \ref{lem:linear_curvature}]\\
& \le \frac{1}{u_-} \left(\bbM(\psi_n) - \bbM^s(\psi_n) + \bbM^s(\psi_n) - \bbM^s(\psi_0) + \bbM^s(\psi_0) - \bbM(\psi_0)\right)  \\
& \le  \frac{1}{u_-} \left(\bbM(\psi_n) - \bbM^s(\psi_n) + \bbM^s(\psi_0) - M(\psi_0)\right)  \hspace{0.2in} [\because \bbM^s(\psi_n) - \bbM^s(\psi_0) \le 0]\\
& \le \frac{2K_1}{u_-}\sigma_n \hspace{0.2in} [\text{from equation} \ \eqref{eq:lin_bound_1}]
\end{align*}

\noindent
As $\psi^s_0$ minimizes $\bbM^s(\psi)$: 
$$\nabla \bbM^s(\psi^s_0) = -\bbE\left((Y-\gamma)\tilde{Q}K'\left(\frac{Q^{\top}\psi^0_s}{\sigma_n}\right)\right) = 0$$
Hence:
\begin{align*}
0 &= \bbE\left((Y-\gamma)\tilde{Q}K'\left(\frac{Q^{\top}\psi_0^s}{\sigma_n}\right)\right) \\
& = \frac{(\beta_0 - \alpha_0)}{2} \bbE\left(\tilde{Q}K'\left(\frac{Q^{\top}\psi_0^s}{\sigma_n}\right)\left\{\mathds{1}(Q^{\top}\psi_0 \ge 0) -\mathds{1}(Q^{\top}\psi_0 < 0)\right\}\right) \\
& = \frac{(\beta_0 - \alpha_0)}{2} \bbE\left(\tilde{Q}K'\left(\frac{Q^{\top}\psi_0}{\sigma_n} + \tilde{\eta}^{\top} \tilde{Q}\right)\left\{\mathds{1}(Q^{\top}\psi_0 \ge 0) -\mathds{1}(Q^{\top}\psi_0 < 0)\right\}\right) \\
& = \frac{(\beta_0 - \alpha_0)}{2} \left[\int_{\mathbb{R}^{p-1}}\tilde{Q} \int_0^{\infty} K'\left(\frac{z}{\sigma_n} + \tilde{\eta}^{\top} \tilde{Q}\right) \ f_0(z|\tilde{Q}) \ dz \ dP(\tilde{Q})\right. \\
& \qquad \qquad \qquad \qquad \qquad \left. - \int_{\mathbb{R}^{p-1}}\tilde{Q} \int_{-\infty}^0 K'\left(\frac{z}{\sigma_n} + \tilde{\eta}^{\top} \tilde{Q}\right) \ f_0(z|\tilde{Q}) \ dz \ dP(\tilde{Q})\right] \\
& =\sigma_n \frac{(\beta_0 - \alpha_0)}{2} \left[\int_{\mathbb{R}^{p-1}}\tilde{Q} \int_0^{\infty} K'\left(t + \tilde{\eta}^{\top} \tilde{Q}\right) \ f_0(\sigma_n t|\tilde{Q}) \ dt \ dP(\tilde{Q})\right. \\
& \qquad \qquad \qquad \qquad \qquad \left. - \int_{\mathbb{R}^{p-1}}\tilde{Q} \int_{-\infty}^0 K'\left(t + \tilde{\eta}^{\top} \tilde{Q}\right) \ f_0(\sigma_n t|\tilde{Q}) \ dz \ dP(\tilde{Q})\right] 
\end{align*}
As $\sigma_n\frac{(\beta_0 - \alpha_0)}{2} > 0$, we can forget about it and continue. Also, as we have proved $\|\tilde \eta\| = O(1)$, there exists a subsequence $\eta_{n_k}$ and a point $c \in \mathbb{R}^{p-1}$ such that $\eta_{n_k} \rightarrow c$. Along that sub-sequence we have:  
\begin{align*}
0 & = \left[\int_{\mathbb{R}^{p-1}}\tilde{Q} \int_0^{\infty} K'\left(t + \tilde{\eta}_{n_k}^{\top} \tilde{Q}\right) \ f_0(\sigma_{n_k} t|\tilde{Q}) \ dt \ dP(\tilde{Q})\right. \\
& \qquad \qquad \qquad \qquad \qquad \left. - \int_{\mathbb{R}^{p-1}}\tilde{Q} \int_{-\infty}^0 K'\left(t + \tilde{\eta}_{n_k}^{\top} \tilde{Q}\right) \ f_0(\sigma_{n_k} t|\tilde{Q}) \ dt \ dP(\tilde{Q})\right] 
\end{align*}
Taking limits on both sides and applying DCT (which is permissible by DCT) we conclude: 
\begin{align*}
0 & = \left[\int_{\mathbb{R}^{p-1}}\tilde{Q} \int_0^{\infty} K'\left(t +c^{\top} \tilde{Q}\right) \ f_0(0|\tilde{Q}) \ dt \ dP(\tilde{Q})\right. \\
& \qquad \qquad \qquad \qquad \qquad \left. - \int_{\mathbb{R}^{p-1}}\tilde{Q} \int_{-\infty}^0 K'\left(t + c^{\top} \tilde{Q}\right) \ f_0(0|\tilde{Q}) \ dt \ dP(\tilde{Q})\right]  \\
& = \left[\int_{\mathbb{R}^{p-1}}\tilde{Q}  \ f_0(0|\tilde{Q}) \int_{c^{\top} \tilde{Q}}^{\infty} K'\left(t\right) \ dt \ dP(\tilde{Q})\right. \\
& \qquad \qquad \qquad \qquad \qquad \left. - \int_{\mathbb{R}^{p-1}}\tilde{Q}\ f_0(0|\tilde{Q})  \int_{-\infty}^{c^{\top} \tilde{Q}} K'\left(t \right) \ dt \ dP(\tilde{Q})\right]  \\
& = \left[\int_{\mathbb{R}^{p-1}}\tilde{Q}  \ f_0(0|\tilde{Q}) \left[1 - K(c^{\top} \tilde{Q})\right] \ dt \ dP(\tilde{Q})\right. \\
&  \qquad \qquad \qquad \qquad \qquad\left. - \int_{\mathbb{R}^{p-1}}\tilde{Q}\ f_0(0|\tilde{Q}) K(c^{\top} \tilde{Q}) \ dt \ dP(\tilde{Q})\right]  \\
& =   \bbE\left(\tilde{Q} \left(2K(c^{\top} \tilde{Q}) - 1\right)f_0(0|\tilde{Q})\right) \,.
\end{align*}
Now, taking the inner-products of both sides with respect to $c$, we get: 
\begin{equation}
\label{eq:zero_eq}
\bbE\left(c^{\top}\tilde{Q} \left(2K(c^{\top} \tilde{Q}) - 1\right)f_0(0|\tilde{Q})\right) = 0 \,.
\end{equation}
By our assumption that $K$ is symmetric kernel and that $K(t) > 0$ for all $t \in (-1, 1)$, we easily conclude that $c^{\top}\tilde{Q} \left(2K(c^{\top} \tilde{Q}) - 1\right) \ge 0$ almost surely in $\tilde{Q}$ with equality iff $c^{\top}X = 0$, which is not possible unless $c = 0$. Hence we conclude that $c = 0$. This shows that any convergent subsequence of $\eta_n$ converges to $0$, which completes the proof. 
\end{proof}

\subsection{Proof of Lemma \ref{lem:rate}}
\begin{proof}
To obtain the rate of convergence of our kernel smoothed estimator we use Theorem 3.4.1 of \cite{vdvw96}: There are three key ingredients that one needs to take care of if in order to apply this theorem:   
\begin{enumerate}
\item Consistency of the estimator (otherwise the conditions of the theorem needs to be valid for all $\eta$). 
\item The curvature of the population score function near its minimizer.
\item A bound on the modulus of continuity in a vicinity of the minimizer of the population score function. 
\end{enumerate}
Below, we establish the curvature of the population score function (item 2 above) globally, thereby obviating the need to establish consistency separately. Recall that the population score function was defined as: 
$$
\bbM^s(\psi) = \bbE\left((Y - \gamma)\left(1 - K\left(\frac{Q^{\top}\psi}{\sigma_n}\right)\right)\right)
$$ 
and our estimator $\hat{\psi}_n$ is the argmin of the corresponding sample version. Consider the set of functions $\mathcal{H}_n = \left\{h_{\psi}: h_{\psi}(q,y) = (y - \gamma)\left(1 - K\left(\frac{q^{\top}\psi}{\sigma_n}\right)\right)\right\}$. Next, we argue that $\mathcal{H}_n$ is a VC class of functions with fixed VC dimension. We know that the function $\{(q,y) \mapsto q^{\top}\psi/\sigma_n: \psi \in \psi\}$ has fixed VC dimension (i.e. not depending on $n$). Now, as a finite dimensional VC class of functions composed with a fixed monotone function or multiplied by a fixed function still remains a finite dimensional VC class, we conclude that $\mathcal{H}_n$ is a fixed dimensional VC class of functions with bounded envelope (as the functions considered here are bounded by 1). 

Now, we establish a lower bound on the curvature of the population score function $\bbM^s(\psi)$ near its minimizer $\psi_n$: 
$$
\bbM^s(\psi) - \bbM^s(\psi_n) \gtrsim d^2_n(\psi, \psi_n)$$ where $$d_n(\psi, \psi_n) = \sqrt{\frac{\|\psi - \psi_n\|^2}{\sigma_n} \mathds{1}\left(\|\psi - \psi_n\| \le \cK\sigma_n\right) + \|\psi - \psi_n\|\mathds{1}\left(\|\psi - \psi_n\| > \cK\sigma_n\right)}
$$ for some constant $\cK > 0$. The intuition behind this compound structure is following: When $\psi$ is in $\sigma_n$ neighborhood of $\psi_n$, $\bbM^s(\psi)$ behaves like a smooth quadratic function, but when it is away from the truth, $\bbM^s(\psi)$ starts resembling $M(\psi)$ which induces the linear curvature. 
\\\\
\noindent
For the linear part, we first establish that $|\bbM(\psi) - \bbM^s(\psi)| = O(\sigma_n)$ uniformly for all $\psi$. Define $\eta = (\psi - \psi_0)/\sigma_n$:
\allowdisplaybreaks
\begin{align}
& |\bbM(\psi) - \bbM^s(\psi)| \notag \\
& \le \bbE\left(\left | \mathds{1}(Q^{\top}\psi \ge 0) - K\left(\frac{Q^{\top}\psi}{\sigma_n}\right)\right | \right) \notag\\
& = \bbE\left(\left | \mathds{1}\left(\frac{Q^{\top}\psi_0}{\sigma_n} + \eta^{\top}\tilde{Q} \ge 0\right) - K\left(\frac{Q^{\top}\psi_0}{\sigma_n} + \eta^{\top}\tilde{Q}\right)\right | \right) \notag \\
& = \sigma_n \int_{\mathbb{R}^{p-1}} \int_{-\infty}^{\infty} \left | \mathds{1}\left(t + \eta^{\top}\tilde{Q} \ge 0\right) - K\left(t + \eta^{\top}\tilde{Q}\right)\right | f_0(\sigma_n t | \tilde{Q}) \ dt \ dP(\tilde{Q}) \notag\\
& = \sigma_n \int_{\mathbb{R}^{p-1}} \int_{-\infty}^{\infty} \left | \mathds{1}\left(t  \ge 0\right) - K\left(t \right)\right | f_0(\sigma_n (t-\eta^{\top}\tilde{Q}) | \tilde{Q}) \ dt \ dP(\tilde{Q}) \notag \\
& = \sigma_n \int_{\mathbb{R}^{p-1}} m(\tilde{Q})\int_{-\infty}^{\infty} \left | \mathds{1}\left(t  \ge 0\right) - K\left(t \right)\right |  \ dt \ dP(\tilde{Q}) \notag\\
& = \sigma_n \bbE(m(\tilde{Q})) \int_{-\infty}^{\infty} \left | \mathds{1}\left(t  \ge 0\right) - K\left(t \right)\right |  \ dt \notag \\
\label{eq:lin_bound_1} & \le K_1 \sigma_n \bbE(m(\tilde{Q})) < \infty \hspace{0.3in}  [\text{by Assumption \ref{as:density_bound}}] \,.
\end{align}
Here, the constant $K_1$ is $\bbE(m(\tilde{Q}))  \left[\int_{-1}^{1}\left | \mathds{1}\left(t  \ge 0\right) - K\left(t \right)\right |  \ dt \right]$ which does not depend on $\psi$, hence the bound is uniform over $\psi$. Next: 
\begin{align*}
\bbM^s(\psi) - \bbM^s(\psi_0^s) & = \bbM^s(\psi) - \bbM(\psi) + \bbM(\psi) - \bbM(\psi_0) \\
& \qquad \qquad + \bbM(\psi_0) - \bbM(\psi_0^s) + \bbM(\psi_0^s) -\bbM^s(\psi_0^s) \\ 
& = T_1 + T_2 + T_3 + T_4
\end{align*}
\noindent
We bound each summand separately: 
\begin{enumerate}
\item $T_1 = \bbM^s(\psi) - \bbM(\psi) \ge -K_1 \sigma_n$ by equation \ref{eq:lin_bound_1}\, 
\item $T_2 = \bbM(\psi) - \bbM(\psi_0) \ge u_-\|\psi - \psi_0\|$ by Lemma \ref{lem:linear_curvature}\,
\item $T_3 = \bbM(\psi_0) - \bbM(\psi_0^s) \ge -u_+\|\psi_0^s - \psi_0\|  \ge  -\epsilon_1 \sigma_n$ where one can take $\epsilon_1$ as small as possible, as we have established $\|\psi_0^s - \psi_0\|/\sigma_n \rightarrow 0$. This follows by Lemma \ref{lem:linear_curvature} along with Lemma \ref{bandwidth}\,  
\item $T_4 = \bbM(\psi_0^s) -\bbM^s(\psi_0^s) \ge -K_1 \sigma_n$ by equation \ref{eq:lin_bound_1}. 
\end{enumerate}
Combining, we have 
\allowdisplaybreaks
\begin{align*}
\bbM^s(\psi) - \bbM^s(\psi_0^s) & \ge u_-\|\psi - \psi_0\| -(2K_1 + \epsilon_1) \sigma_n \\
& \ge ( u_-/2)\|\psi - \psi_0\| \hspace{0.2in} \left[\text{If} \ \|\psi - \psi_0\| \ge \frac{2(2K_1 + \epsilon_1)}{u_-}\sigma_n\right] \\
& \ge ( u_-/4)\|\psi - \psi_0^s\| 
\end{align*}
where the last inequality holds for all large $n$ as proved in Lemma \ref{bandwidth}. Using Lemma \ref{bandwidth} again, we conclude that for any pair of positive constants $(\epsilon_1, \epsilon_2)$:  
$$\|\psi - \psi_0^s\| \ge \left(\frac{2(2K_1 + \epsilon_1)}{u_-}+\epsilon_2\right)\sigma_n \Rightarrow \|\psi - \psi_0\| \ge \frac{2(2K_1 + \epsilon_1)}{u_-}\sigma_n$$ for all large $n$, which implies: 
\begin{align}
& \bbM^s(\psi) - \bbM^s(\psi_0^s) \notag \\
& \ge (u_-/4) \|\psi - \psi_0^s\| \mathds{1}\left(\|\psi - \psi_0^s\| \ge \left(\frac{2(2K_1 + \epsilon_1)}{u_-}+\epsilon_2\right)\sigma_n \right) \notag \\
\label{lb2} & \ge (u_-/4) \|\psi - \psi_0^s\| \mathds{1}\left(\frac{\|\psi - \psi_0^s\|}{\sigma_n} \ge \left(\frac{7K_1}{u_-}\right) \right)  \hspace{0.2in} [\text{for appropriate specifications of} \ \epsilon_1, \epsilon_2] \notag \\
& :=  (u_-/4) \|\psi - \psi_0^s\| \mathds{1}\left(\frac{\|\psi - \psi_0^s\|}{\sigma_n} \ge \cK \right)
\end{align}

\noindent
In the next part, we find the lower bound when $\|\psi - \psi^0_s\| \le \cK \sigma_n$. For the quadratic curvature, we perform a two step Taylor expansion: Define $\eta = (\psi - \psi_0)/\sigma_n$. We have: 
\allowdisplaybreaks 
\begin{align}
& \nabla^2\bbM^s(\psi) \notag\\
& = \frac{\beta_0 - \alpha_0}{2}\frac{1}{\sigma_n^2} \bbE\left(\tilde{Q}\tilde{Q}^{\top} K''\left(\frac{Q^{\top}\psi}{\sigma_n}\right)\left\{\mathds{1}(Q^{\top}\psi_0 \le 0) - \mathds{1}(Q^{\top}\psi_0 \ge 0)\right\}\right) \notag\\
& = \frac{\beta_0 - \alpha_0}{2}\frac{1}{\sigma_n^2} \bbE\left(\tilde{Q}\tilde{Q}^{\top} K''\left(\frac{Q^{\top}\psi_0}{\sigma_n} + \tilde{Q}^{\top}\tilde \eta \right)\left\{\mathds{1}(Q^{\top}\psi_0 \le 0) - \mathds{1}(Q^{\top}\psi_0 \ge 0)\right\}\right) \notag\\
& = \frac{\beta_0 - \alpha_0}{2}\frac{1}{\sigma_n^2} \bbE\left[\tilde{Q}\tilde{Q}^{\top} \left[\int_{-\infty}^{0} K''\left(\frac{z}{\sigma_n} + \tilde{Q}^{\top}\tilde \eta \right) f_0(z |\tilde{Q}) \ dz \right. \right. \notag \\ 
& \left. \left. \qquad \qquad \qquad \qquad -\int_{0}^{\infty} K''\left(\frac{z}{\sigma_n} + \tilde{Q}^{\top}\tilde \eta \right) f_0(z | \tilde{Q}) \ dz \right]\right] \notag\\
& = \frac{\beta_0 - \alpha_0}{2}\frac{1}{\sigma_n} \bbE\left[\tilde{Q}\tilde{Q}^{\top} \left[\int_{-\infty}^{0} K''\left(t+ \tilde{Q}^{\top}\tilde \eta \right) f_0(\sigma_n t |\tilde{Q}) \ dt \right. \right. \notag \\
& \left. \left. \qquad \qquad \qquad \qquad - \int_{0}^{\infty} K''\left(t + \tilde{Q}^{\top}\tilde \eta \right) f_0(\sigma_n t | \tilde{Q}) \ dt \right]\right] \notag\\
& = \frac{\beta_0 - \alpha_0}{2}\frac{1}{\sigma_n} \bbE\left[\tilde{Q}\tilde{Q}^{\top} f_0(0| \tilde{Q})\left[\int_{-\infty}^{0} K''\left(t+ \tilde{Q}^{\top}\tilde \eta \right) \ dt \right. \right. \notag \\
& \left. \left. \qquad \qquad \qquad \qquad - \int_{0}^{\infty} K''\left(t + \tilde{Q}^{\top}\tilde \eta \right) \ dt \right]\right] + R \notag\\
\label{eq:quad_eq_1} & =(\beta_0 - \alpha_0)\frac{1}{\sigma_n}\bbE\left[\tilde{Q}\tilde{Q}^{\top} f_0(0| \tilde{Q})K'(\tilde{Q}^{\top}\tilde \eta)\right] + R \,.
\end{align}
As we want a lower bound on the set $\|\psi - \psi^0_s\| \le \cK \sigma_n$, we have $\|\eta\| \le \cK$. For the rest of the analysis, define 
\begin{align*}
\Lambda: (v_1, v_2) \mapsto  \inf_{\|v_1\| = 1, \|v_2\| \le \cK} \bbE_{\tilde X}\left[|v_1^{\top}\tilde{Q}|^2 f(0|\tilde{Q})K'(\tilde{Q}^{\top}v_2) \right]
\end{align*}
Clearly $\Lambda \ge 0$ and continuous on a compact set, hence its infimum is attained. Suppose $\Lambda(v_1, v_2) = 0$ for some $v_1, v_2$. Then we have: 
\begin{align*}
\bbE\left[|v_1^{\top}\tilde{Q}|^2 f(0|\tilde{Q})K'(\tilde{Q}^{\top}v_2)  \right] = 0 \,,
\end{align*}
which further implies $|\tilde v_1^{\top}\tilde X| = 0$ almost surely and violates Assumption \ref{as:eigenval_bound}. Hence, our claim is demonstrated. On the other hand, for the remainder term of equation \eqref{eq:quad_eq_1}: 
fix $\nu \in S^{p-1}$. Then: 
\allowdisplaybreaks
\begin{align}
& \left| \nu^{\top} R \nu \right| \notag \\
& = \left|\frac{1}{\sigma_n} \bbE\left[\left(\nu^{\top}\tilde{Q}\right)^2 \left[\int_{-\infty}^{0} K''\left(t+ \tilde{Q}^{\top}\tilde \eta \right) (f_0(\sigma_n t |\tilde{Q}) - f_0(0|\tilde{Q})) \ dt  \right. \right. \right. \notag \\
& \qquad \qquad \qquad \qquad \left. \left. \left. - \int_{0}^{\infty} K''\left(t + \tilde{Q}^{\top}\tilde \eta \right) (f_0(\sigma_n t |\tilde{Q}) - f_0(0|\tilde{Q})) \ dt \right]\right]\right| \notag\\
& \le  \bbE \left[\left(\nu^{\top}\tilde{Q}\right)^2h(\tilde{Q}) \int_{-\infty}^{\infty} \left|K''\left(t+ \tilde{Q}^{\top}\tilde \eta \right)\right| |t| \ dt\right] \notag\\
& \le  \bbE \left[\left(\nu^{\top}\tilde{Q}\right)^2h(\tilde{Q}) \int_{-1}^{1} \left|K''\left(t\right)\right| |t - \tilde{Q}^{\top}\tilde \eta | \ dt\right] \notag\\
\label{eq:quad_eq_3} & \le  \bbE \left[\left(\nu^{\top}\tilde{Q}\right)^2h(\tilde{Q})(1+ \|\tilde{Q}\|/2\kappa) \int_{-1}^{1} \left|K''\left(t\right)\right| \ dt\right] = C_1 \hspace{0.2in} [\text{say}]
\end{align}
by Assumption \ref{as:distribution} and Assumption \ref{as:derivative_bound}. By a two-step Taylor expansion, we have: 
\begin{align*}
\bbM^s(\psi) - \bbM^s(\psi_0^s) & =  \frac12 (\psi - \psi_0^s)^{\top} \nabla^2\bbM^s(\psi^*_n) (\psi - \psi_0^s)  \\
& \ge \left(\min_{\|v_1\| = 1, \|v_2 \| \le \cK} \Lambda(v_1, v_2)\right) \frac{\|\psi - \psi_0^s\|^2}{2\sigma_n} - \frac{C_1\sigma_n}{2} \, \frac{\|\psi - \psi_0^s\|^2_2}{\sigma_n} \\
& \gtrsim \frac{\|\psi - \psi_0^s\|^2_2}{\sigma_n} \,
\end{align*}
This concludes the proof of the curvature. 
\\\\
\noindent 
Finally, we bound the modulus of continuity:
$$\bbE\left(\sup_{d_n(\psi, \psi_0^s) \le \delta} \left|(\mathbb{M}^s_n-\bbM^s)(\psi) - (\mathbb{M}^s_n-\bbM^s)(\psi_n)\right|\right) \,.$$ 
The proof is similar to that of Lemma \ref{lem:rate_smooth} and therefore we sketch the main steps briefly. Define the estimating function $f_\psi$ as: 
$$
f_\psi(Y, Q) = (Y - \gamma)\left(1 - K\left(\frac{Q^{\top}\psi}{\sigma_n}\right)\right) 
$$
and the collection of functions $\cF_\zeta = \{f_\psi - f_{\psi_0^n}: d_n(\psi, \psi_0^s) \le \delta\}$. That $\cF_\zeta$ has finite VC dimension follows from the same argument used to show $\cG_n$ has finite VC dimension in the proof of Lemma \ref{conv-prob}. Now to bound modulus of continuity, we use Lemma 2.14.1 of \cite{vdvw96}, which implies: 
$$
\sqrt{n}\bbE\left(\sup_{d_n(\psi, \psi_0^s) \le \delta} \left|(\mathbb{M}^s_n-\bbM^s)(\psi) - (\mathbb{M}^s_n-\bbM^s)(\psi_n)\right|\right)  \lesssim \cJ(1, \cF_\zeta) \sqrt{PF_\zeta^2}
$$
where $F_\zeta(Y, Q)$ is the envelope of $\cF_\zeta$ defined as: 
\begin{align*}
F_\zeta(Y, Q) & = \sup_{d_*(\psi, \psi_0^s) \le \zeta}\left|(Y - \gamma)\left(K\left(\frac{Q^{\top}\psi^s_0}{\sigma_n}\right)-K\left(\frac{Q^{\top}\psi}{\sigma_n}\right)\right)\right| \\
& = \left|(Y - \gamma)\right| \sup_{d_*(\psi, \psi_0^s) \le \zeta} \left|\left(K\left(\frac{Q^{\top}\psi^s_0}{\sigma_n}\right)-K\left(\frac{Q^{\top}\psi}{\sigma_n}\right)\right)\right|
\end{align*}
and $\cJ(1, \cF_\zeta)$ is the entropy integral which can be bounded above by a constant independent of $n$ as the class $\cF_\zeta$ has finite VC dimension. As in the proof of Lemma \ref{lem:rate_smooth}, we here consider two separate cases: (1) $\zeta \le \sqrt{\cK \sigma_n}$ and (2) $\zeta > \sqrt{\cK \sigma_n}$. In the first case, we have $\sup_{d_n(\psi, \psi_0^s) \le \zeta} \|\psi_ - \psi_0^s\| = \zeta \sqrt{\sigma_n}$. This further implies: 
\begin{align*}
    & \sup_{d_*(\psi, \psi_0^s) \le \zeta} \left|\left\{K\left(\frac{Q^{\top}\psi_0^s}{\sigma_n}\right) -  K\left(\frac{Q^{\top}\psi}{\sigma_n}\right)\right\}\right|^2 \\
    & \le \max\left\{\left|\left\{K\left(\frac{Q^{\top}\psi_0^s}{\sigma_n}\right) -  K\left(\frac{Q^{\top}\psi_0^s}{\sigma_n} + \|\tilde Q\|\frac{\zeta}{\sqrt{\sigma_n}}\right)\right\}\right|^2, \right. \\
    & \qquad \qquad \qquad \qquad \left. \left|\left\{K\left(\frac{Q^{\top}\psi_0^s}{\sigma_n}\right) -  K\left(\frac{Q^{\top}\psi_0^s}{\sigma_n} - \|\tilde Q\|\frac{\zeta}{\sqrt{\sigma_n}}\right)\right\}\right|^2\right\} \\
    & := \max\{T_1, T_2\} \,.
\end{align*}
Therefore to bound $\bbE[F_\zeta^2(Y, Q)]$ is equivalent to bounding both $\bbE[(Y- \gamma)^2 T_1]$ and $\bbE[(Y - \gamma)^2 T_2]$ separately, which, in turn equivalent to bound $\bbE[T_1]$ and $\bbE[T_2]$, as $|Y - \gamma| \le 1$. These bounds follows from similar calculation as of Lemma \ref{lem:rate_smooth}, hence skipped. Finally we have in this case, $$
\bbE[F_\zeta^2(Y, Q)] \lesssim \zeta \sqrt{\sigma_n} \,.
$$ 
The other case, when $\zeta > \sqrt{\cK \sigma_n}$ also follows by similar calculation of Lemma \ref{lem:rate_smooth}, which yields: 
$$
\bbE[F_\zeta^2(Y, Q)] \lesssim \zeta^2 \,.
$$

\noindent
Using this in the maximal inequality yields: 
\begin{align*}
\sqrt{n}\bbE\left(\sup_{d_n(\psi, \psi_0) \le \delta} \left|\mathbb{M}_n(\psi - \psi_n) - \bbM^s(\psi - \psi_n)\right|\right) & \lesssim \sqrt{\zeta}\sigma^{1/4}_n\mathds{1}_{\zeta \le \sqrt{\cK \sigma_n}} +  \zeta \mathds{1}_{\zeta > \sqrt{\cK \sigma_n}} \\
& := \phi_n(\zeta) \,
\end{align*}
This implies (following the same argument as of Lemma \ref{lem:rate_smooth}): 
$$
n^{2/3}\sigma_n^{-1/3}d^2(\hat \psi^s, \psi_0^s) = O_p(1) \,.
$$
Now as $n^{2/3}\sigma_n^{-1/3} \gg \sigma_n^{-1}$, we have: 
$$
\frac{1}{\sigma_n}d_n^2(\hat \psi^s, \psi_0^s) = o_p(1) \,.
$$
which further indicates
\begin{align}
\label{rate1} & n^{2/3}\sigma_n^{-1/3}\left[\frac{\|\hat \psi^s - \psi_0^s\|^2}{\sigma_n} \mathds{1}(\|\hat \psi^s - \psi_0^s\| \le \cK\sigma_n) \right. \notag \\
& \qquad \qquad \qquad \left. + \|\hat \psi^s - \psi_0^s\|  \mathds{1}(\|\hat \psi^s - \psi_0^s\|\ge \cK\sigma_n)\right] = O_P(1)
\end{align}
This implies: 
\begin{enumerate}
\item $\frac{n^{2/3}}{\sigma_n^{4/3}}\|\hat \psi^s - \psi_0^s\| \mathds{1}(\|\hat \psi^s - \psi_0^s\|\le \cK\sigma_n) = O_P(1)$
\item $\frac{n^{2/3}}{\sigma_n^{1/3}}\|\hat \psi^s - \psi_0^s\| \mathds{1}(\|\hat \psi^s - \psi_0^s\| \ge \cK\sigma_n) = O_P(1)$
\end{enumerate}
Therefore: 
\begin{align*}
& \frac{n^{2/3}}{\sigma_n^{4/3}}\|\hat \psi^s - \psi_0^s\| \mathds{1}(\|\hat \psi^s - \psi_0^s\| \le \cK\sigma_n) \\
& \qquad \qquad \qquad + \frac{n^{2/3}}{\sigma_n^{1/3}}\|\hat \psi^s - \psi_0^s\| \mathds{1}(\|\hat \psi^s - \psi_0^s\| \ge \cK\sigma_n) = O_p(1) \,.
\end{align*}
i.e. 
$$
\left(\frac{n^{2/3}}{\sigma_n^{4/3}} \wedge  \frac{n^{2/3}}{\sigma_n^{1/3}}\right)\|\hat \psi^s - \psi_0^s\| = O_p(1) \,.
$$
Now $(n^{2/3}/\sigma_n^{4/3} \gg 1/\sigma_n$ as long as $n^{2/3} \gg \sigma_n^{1/3}$ which is obviously true. On the other hand, $n^{2/3}/\sigma_n^{1/3} \gg 1/\sigma_n$ iff $n\sigma_n \gg 1$ which is also true as per our assumption. Therefore we have: 
$$
\frac{\|\hat \psi^s - \psi_0^s\|}{\sigma_n} = O_p(1) \,.
$$
This completes the proof. 

%
%
%
%
%
\end{proof}

\newpage

\bibliography{D_C_CP, mybib}{}
\bibliographystyle{plain}

\end{document}